\providecommand{\U}[1]{\protect\rule{.1in}{.1in}}
\newtheorem{theorem}{Theorem}
\newtheorem{corollary}[theorem]{Corollary}
\newtheorem{definition}[theorem]{Definition}
\newtheorem{example}[theorem]{Example}
\newtheorem{lemma}[theorem]{Lemma}
\newtheorem{proposition}[theorem]{Proposition}
\newtheorem{remark}[theorem]{Remark}
\thanks{}
\begin{document}
\newcommand{\R}{{\mathbb R}}
\newcommand{\C}{{\mathbb C}} 
\newcommand{\T}{{\mathbb T}}
\newcommand{\D}{{\mathbb D}}
\renewcommand{\P}{\mathbb P}

\newcommand{\Aa}{{\mathcal A}}
\newcommand{\Ii}{{\mathcal I}}
\newcommand{\Jj}{{\mathcal J}}
\newcommand{\Nn}{{\mathcal N}}
\newcommand{\Ll}{{\mathcal L}}
\newcommand{\Tt}{{\mathcal T}}
\newcommand{\Gg}{{\mathcal G}}
\newcommand{\Dd}{{\mathcal D}}
\newcommand{\Cc}{{\mathcal C}}
\newcommand{\Oo}{{\mathcal O}}

\newcommand{\pr}{\operatorname{pr}}
\newcommand{\bla}{\langle \! \langle}
\newcommand{\bra}{\rangle \! \rangle}
\newcommand{\blq}{[ \! [}
\newcommand{\brq}{] \! ]}

\title{Holomorphic Jacobi Manifolds}

\author{Luca Vitagliano}
\address{DipMat, Universit\`a degli Studi di Salerno, via Giovanni Paolo II n${}^\circ$ 123, 84084 Fisciano (SA) Italy.}
\email{lvitagliano@unisa.it}

\author{A\"issa Wade}
\address{Department of Mathematics, Penn State University, University Park, State College, PA 16802, USA.}
\email{wade@math.psu.edu}

\begin{abstract}
In this paper, we develop holomorphic Jacobi structures. Holomorphic Jacobi manifolds are in one-to-one correspondence with certain homogeneous holomorphic Poisson manifolds. Furthermore, holomorphic Poisson manifolds can be looked at as special cases of holomorphic Jacobi manifolds.  We show that holomorphic Jacobi structures yield a much richer framework than that of holomorphic Poisson structures. We also discuss the relationship between  holomorphic Jacobi structures, generalized contact bundles and Jacobi-Nijenhuis structures.

\end{abstract}
\maketitle


\emph{Keywords:} Jacobi manifolds, generalized contact structures, holomorphic Poisson manifolds, homogeneous Poisson manifolds, Jacobi Nijenhuis manifolds, Lie algebroids.

\section{Introduction}

In this paper, we study  
holomorphic Jacobi manifolds.  By a holomorphic Jacobi manifold, we mean a complex manifold $X$ equipped with
a holomorphic line bundle $L \to X$ together with a holomorphic  bi-derivation $J$ of $L$ so that $(L, J)$ is a Jacobi structure (Definition \ref{def:main}, see also \cite{Kirillov1976, Lichn1978, Marle1991, CS2015} for more details about real Jacobi manifolds). Equivalently, $J$ is completely determined by 
the data of a Jacobi $\Oo_X$-module on the sheaf $\Gamma_L$ of holomorphic sections of $L$, where $\Oo_X$ is the standard sheaf of holomorphic functions on $X$. In other words,
 for each open set $U \subset X$, $\Gamma_L (U)$ is an $\mathcal O_X(U)$-module endowed with a Lie bracket
$\{-,-\} : \Gamma_L (U) \times \Gamma_L (U) \to \Gamma_L (U)$ and a Lie algebra homomorphism $R_U: \Gamma_L (U) \to \operatorname{Der} \mathcal O_X(U)$ satisfying:
\[
 \{ \lambda_1, f \lambda_2 \} = \big( R_U({\lambda_1}) f \big) \lambda_2 + f \{ \lambda_1, \lambda_2 \},
\]
for all $\lambda_1, \lambda_2 \in \Gamma_L(U)$ and for all $f \in \mathcal O_X(U)$.  In addition, these Lie brackets and homomorphisms $R_U$ are all required to be compatible with restrictions.
 Non-degenerate holomorphic Jacobi structures are holomorphic contact structures which naturally appear in many contexts (see, e.g.~\cite{L1995, S1982, NT1987}).
 
Recently,  holomorphic Poisson structures were intensively studied due to their close relationship with generalized complex geometry. 
In fact, it was proved in \cite{Bailey} that, locally, any generalized complex manifold is the product of a symplectic manifold by a holomorphic Poisson manifold. Moreover, deformations of holomorphic Poisson manifolds were investigated by several mathematicians \cite{H2012, G2010, FM2012} while their integration problem was considered in \cite {LSX2008}. 
  A quite natural question is whether all these results on holomorphic Poisson structures can be extended to the setting of Jacobi geometry. But this question still remained open despite the close relationship between  Poisson manifolds and Jacobi manifolds.
  
  Recall that Jacobi structures on manifolds were  introduced independently by Lichnerowicz \cite{Lichn1978} and Kirillov \cite{Kirillov1976} who used different but basically equivalent definitions. Adopting Lichnerowicz's  perspective, one easily see that the category of Poisson manifolds is a subcategory of the category of Jacobi manifolds. But Jacobi manifolds can also be regarded as \emph{homogeneous Poisson manifolds}. Here, by a \emph{homogeneous Poisson manifold} \cite{DLM1991}, we mean a manifold $M$ equipped with a Poisson bi-vector $\pi$ together with a vector field $\eta$ such that $[\eta, \pi]^{SN} = \mathcal L_\eta \pi = -\pi$, where $[-,-]^{SN}$ is the Schouten-Nijenhuis bracket.  Nonetheless, almost nothing is known about holomorphic Jacobi structures. The present paper is the first in a series aiming at filling this gap. 
   In both the real and complex cases, we will adopt the general line bundle  approach  to Jacobi geometry  (see \cite{Marle1991, V2015, VW2016}). Of course, this approach includes standard Jacobi structures as considered by  Lichnerowicz \cite{Lichn1978}. The latter are defined as pairs $(\pi, E)$ of tensors consisting of a bivector field $\pi$ and a vector field $E$ satisfying $[\pi, \pi]^{SN}=2 E \wedge \pi $ and $[\pi, E]^{SN}=0$, and correspond to the case of the trivial line bundle $M \times \mathbb R \to M$ \cite{Marle1991, V2015, VW2016}. Passing from trivial to general line bundles has relevant conceptual advantages, but, in the real case, it represents only a minor improvement from a practical point of view since,  in the real setting, a line bundle is always trivial up to taking a double cover of its base manifold. Of course, this is far from being the case in the holomorphic setting. Hence, in the holomorphic case, taking the general line bundle point of view is of a particular importance. In fact, this leads to new interesting features (see below).

 In this paper, we want to understand holomorphic Jacobi manifolds from a real 
 differential geometric point of view as  carried out  for holomorphic Poisson 
 manifolds in \cite{LSX2008}. In particular, our focus is always on real differential 
 geometry. For instance we describe the gauge algebroid and the first jet bundle of a 
 holomorphic vector bundle in real geometric terms. One of our main ingredients is 
 the so called homogenization scheme (see \cite{VW2019}). More precisely, there is 
 an equivalence between the categories of real line bundles  and principal $\mathbb
  R^\times$-bundles, where $\mathbb R^\times$ denotes the multiplicative group of 
  non-zero reals. Indeed, given a real line bundle $\ell \to M$,  its slit dual bundle  
  ${\widetilde M}=\ell^\ast \smallsetminus \{0 \}$  is a principal bundle over $M$ with
   structure group the multiplicative group $\mathbb R^\times$. We call $\widetilde M$ the \emph{homogenization} of $\ell$. Given a real line bundle $\ell \to M$ together with a Jacobi structure $J$, there is a homogeneous Poisson structure $\pi$ on $\widetilde M$. The pairs $(\ell, J)$ and $(\widetilde M, \pi)$ encode the same mathematical information. A similar construction works in the holomorphic setting. However, when trying to describe holomorphic Jacobi manifolds from a real point of view, new complications pop up. Namely, there are subtle differences between holomorphic Poisson structures and holomorphic Jacobi structures due, in part, to the fact the a holomorphic line bundle $L$ over a complex manifold $X=(M, j)$ is not a rank 1 but it's a rank 2 real vector bundle over $M$. So, a direct homogenization scheme won't  work when one tries to generalize results from \cite{LSX2008} to holomorphic Jacobi structures. In Section \ref{sec:HJ_JN_GC}, we overcame this difficulty by noticing that, in the complex case, one can perform the following  two-step procedure. Firstly, one passes  from a holomorphic line bundle  $L \to X$ to a real $U(1)$-principal bundle $\widehat M \to M$ equipped with a canonical real line bundle $\widehat L$ by taking $\widehat M = \mathbb R \mathbb P (L^*)$, the real projectivization of the dual $L^\ast$ (which is obviously of odd dimension), and the dual  $\widehat L \to \widehat M$ of the tautological bundle. 
 Secondly, one checks that the (real) homogenization of  $\widehat L \to \widehat M$ agrees with the complex homogenization $\widetilde X= L^\ast \smallsetminus  \{0\}$. Of course, one should keep track of the complex structure  along all this procedure.

  The paper is divided into three parts. In Section \ref{Sec:Basics}, we explore basic definitions and results  needed for a better understanding of holomorphic Jacobi structures.
  Section \ref{Sec:HolomJacobi} defines holomorphic Jacobi structures and unravels their properties as well as their relationship with Jacobi-Nijenhuis \cite{MMP1999}, generalized contact structures \cite{VW2016}  and analogous \emph{homogeneous} structures in the realm of Poisson geometry. In sum, we explain the following diagram:
  \[  
\xymatrix{ \text{Jacobi-Nijenhuis} \ar@{<=>}[d] & \text{holomorphic Jacobi} \ar@{=>}[r] \ar@{=>}[l] \ar@{<=>}[d] & \text{generalized contact} \ar@{<=>}[d] \\
\txt{homogeneous \\ Poisson-Nijenhuis} \ 
& \txt{homogeneous \\ holomorphic Poisson}\  \ar@{=>}[r] \ar@{=>}[l] & \txt{homogeneous \\ generalized complex}\ 
}
\]

  Finally, Section \ref{Sec:LieAlgJacobi} studies the Lie algebroid of a holomorphic Jacobi manifold.
    
  In this paper, undecorated tensor products and homomorphisms are over $\C$ or complex valued smooth functions, unless otherwise stated. Moreover, if $E$ is a (real) vector bundle, we denote by $E^\C := E \otimes_\R \C$ its \emph{complexification}.
  
\section{Basics definitions and results}\label{Sec:Basics}

\subsection{Holomorphic Poisson, Poisson-Nijenhuis and generalized complex manifolds}

In this section we recall the basic definitions and results from \cite{LSX2008}. Let $M$ be a (real) manifold $M$ equipped with a complex structure $j$. 

\begin{definition}
 A \emph{holomorphic Poisson structure}  on the complex manifold $X = (M,j)$ is a \emph{holomorphic Poisson bi-vector}  $\Pi$, that is a bi-vector $\Pi \in \Gamma (\wedge^2 T^{1,0}M)$  satisfying:
   $$\overline{\partial} \Pi = 0 \quad \quad and \quad \quad [ \Pi, \Pi]^{SN} = 0, $$ where $[-,-]^{SN}$ is the \emph{Schouten-Nijenhuis bracket} of complex multivectors. A \emph{holomorphic Poisson manifold} is a complex manifold equipped with a holomorphic Poisson structure.
\end{definition}

\begin{remark}
A holomorphic Poisson manifold is nothing but  a complex manifold $X$ equipped with the structure of a sheaf of Poisson algebra on its sheaf $\Oo_X$ of holomorphic sections. Given a holomorphic Poisson manifold $(X, \Pi)$, the Poisson bracket of two holomorphic functions $f,g$ on $X$, is given by $\{f,g \} = \Pi (df, dg) = \Pi (\partial f, \partial g)$.
\end{remark}

Characterizations of holomorphic Poisson structures in terms of  Poisson-Nijenhuis structures and  generalized complex structures of special type were given in \cite{LSX2008} (see Theorem \ref{theor:hP_PN_gc} below).
Before reviewing these characterizations, we will recall the basic definitions.

First of all,  every bi-vector $\pi$ on a manifold $M$ determines a skew-symmetric bracket $[-,-]_\pi$ on $1$-forms given by
\begin{equation}\label{eq:[-,-]_pi}
[\rho, \tau]_\pi := \Ll_{\pi^\sharp \rho} \tau - \Ll_{\pi^\sharp \tau} \rho -d \pi (\rho, \tau),
\end{equation}
for all $\rho, \tau \in \Omega^1 (M)$, where $\pi^\sharp : T^\ast M \to TM$ consists in ``raising an index via $\pi$'', i.e.~$\pi^\sharp \rho := \pi (\rho, -)$. A direct computation shows that $\pi$ is a Poisson bi-vector if and only if $[-,-]_\pi$ is a Lie bracket. In this case, $[-,-]_\pi$ is the Lie bracket on sections of the \emph{cotangent algebroid} $(T^\ast M)_\pi$ of the Poisson manifold $(M, \pi)$. Now, let $\phi : TM \to TM$ be a $(1,1)$-tensor, and let $\phi^\ast : T^\ast M \to T^\ast M$ be its transpose. If $\pi^\sharp \circ \phi^\ast = \phi \circ \pi^\sharp$, then 
\begin{equation}\label{Eq:pi_phi}
\pi_\phi := \pi (\phi^\ast -,-)
\end{equation}
 is a well-defined bi-vector such that $\pi^\sharp_\phi = \pi^\sharp \circ \phi^\ast$.

Let $\pi$ be a Poisson bi-vector, and let $\phi$ be a $(1,1)$ tensor on $M$. We say that $\phi$ is \emph{compatible} with $\pi$ if
\begin{equation}
\pi^\sharp \circ \phi^\ast = \phi \circ \pi^\sharp,
\end{equation}
hence $\pi_\phi$ is well-defined by (\ref{Eq:pi_phi}), and 
\begin{equation}
\phi^\ast [\rho, \tau]_\pi = [\phi^\ast \rho, \tau]_\pi + [\rho, \phi^\ast \tau]_\pi - [\rho, \tau]_{\pi_\phi},
\end{equation}
for all $\rho, \tau \in \Omega^1 (M)$.

\begin{definition}
A \emph{Poisson-Nijenhuis manifold} is a manifold $M$ equipped with a \emph{Poisson-Nijenhuis structure}, i.e.~a pair $(\pi, \phi)$, where $\pi$ is a Poisson bi-vector, and $\phi$ is a compatible $(1,1)$ tensor whose \emph{Nijenhuis torsion} $\mathcal N_\phi : \wedge^2 TM \to TM$,   defined by:
\[
\mathcal N_\phi (\xi,\zeta) :=  [\phi (\xi), \phi (\zeta)] + \phi^2 [\xi,\zeta] - \phi [\phi (\xi), \zeta] - \phi [\xi, \phi (\zeta)],
\]
 for all $\xi,\zeta \in \mathfrak X (M)$, vanishes identically.
\end{definition}

\begin{proposition}
Let $(\pi, \phi)$ be a Poisson-Nijenhuis structure. Then $(\pi, \pi_\phi)$ is a \emph{bi-Hamiltonian} structure, i.e.~$\pi$, $\pi_\phi$ and $\pi + \pi_\phi$ are all Poisson bi-vectors.
\end{proposition}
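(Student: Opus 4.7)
My plan is to verify the three Schouten--Nijenhuis vanishings $[\pi,\pi]^{SN}=0$, $[\pi_\phi,\pi_\phi]^{SN}=0$, and $[\pi,\pi_\phi]^{SN}=0$: the first is the Poisson hypothesis, and the third combined with the first two gives $[\pi+\pi_\phi,\pi+\pi_\phi]^{SN}=0$. The main reformulation I would use is the standard equivalence between $[\pi',\pi']^{SN}=0$ and the Jacobi identity for the skew bracket $[-,-]_{\pi'}$ on $\Omega^1(M)$ defined in (\ref{eq:[-,-]_pi}), together with the obvious linearity of the assignment $\pi'\mapsto [-,-]_{\pi'}$ in $\pi'$. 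Thus the task reduces to checking the Jacobi identity for $[-,-]_{\pi_\phi}$ and the mixed identity $\sum_{\mathrm{cyc}}\bigl([[\rho,\sigma]_\pi,\tau]_{\pi_\phi} + [[\rho,\sigma]_{\pi_\phi},\tau]_\pi\bigr)=0$.

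The first concrete step is to use the compatibility identity from the preceding display to rewrite
\[
[\rho,\sigma]_{\pi_\phi} = [\phi^\ast\rho,\sigma]_\pi + [\rho,\phi^\ast\sigma]_\pi - \phi^\ast[\rho,\sigma]_\pi,
\]
and then to expand the cyclic sum $\sum_{\mathrm{cyc}}[[\rho,\sigma]_{\pi_\phi},\tau]_{\pi_\phi}$ by two applications of this identity. After invoking Jacobi for $[-,-]_\pi$ and regrouping, what remains is a cyclic sum of ``Nijenhuis-type'' expressions for $\phi^\ast$ with respect to $[-,-]_\pi$. This residual sum vanishes precisely when $\phi^\ast$ is a Nijenhuis operator on the cotangent Lie algebroid $(T^\ast M)_\pi$, i.e., when
\[
[\phi^\ast\rho,\phi^\ast\sigma]_\pi = \phi^\ast\bigl([\phi^\ast\rho,\sigma]_\pi + [\rho,\phi^\ast\sigma]_\pi - \phi^\ast[\rho,\sigma]_\pi\bigr)
\]
for all $\rho,\sigma\in\Omega^1(M)$. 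The parallel expansion of the mixed cyclic sum calls on the same dual identity, so both Poisson conditions reduce to this single statement.

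The heart of the proof is therefore the equivalence between $\mathcal N_\phi=0$ on vector fields and the dual Nijenhuis identity above on 1-forms. I would verify this first on exact 1-forms $\rho=df$, $\sigma=dg$ by pairing both sides with $\pi^\sharp$ and using $\pi^\sharp\circ\phi^\ast=\phi\circ\pi^\sharp$ to reduce both sides to an expression involving $\mathcal N_\phi(\pi^\sharp df,\pi^\sharp dg)$, and then extend to arbitrary 1-forms through the Leibniz property of $[-,-]_\pi$. The principal obstacle is precisely this last extension: since $[-,-]_\pi$ is not $C^\infty(M)$-bilinear, several $C^\infty(M)$-multiplication correction terms must be tracked and shown to cancel in just the right way. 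Once the dual Nijenhuis identity is in place, the Poisson conditions for $\pi_\phi$ and $\pi+\pi_\phi$ follow directly from the expansions described above.
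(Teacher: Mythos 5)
First, a remark on the comparison you were asked about: the paper states this proposition without proof (it is the classical Magri--Morosi / Kosmann-Schwarzbach--Magri theorem), so there is no in-paper argument to measure your proposal against; I can only judge it on its own terms. Your overall architecture is the standard one: translate each Schouten--Nijenhuis condition into a Jacobi-type identity for the brackets $[-,-]_\pi$ and $[-,-]_{\pi_\phi}$, use the compatibility hypothesis to rewrite $[-,-]_{\pi_\phi}$ as the bracket $[-,-]_\pi$ deformed by $\phi^\ast$, and reduce everything to the statement that $\phi^\ast$ is a Nijenhuis operator for the cotangent algebroid $(T^\ast M)_\pi$. That target statement is true, and granting it the rest of your argument goes through.

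The gap is in the step you yourself call the heart of the proof. Writing
\[
T(\rho,\sigma):=[\phi^\ast\rho,\phi^\ast\sigma]_\pi-\phi^\ast\bigl([\phi^\ast\rho,\sigma]_\pi+[\rho,\phi^\ast\sigma]_\pi-\phi^\ast[\rho,\sigma]_\pi\bigr),
\]
your proposed verification applies $\pi^\sharp$ to both sides; using $\pi^\sharp\circ\phi^\ast=\phi\circ\pi^\sharp$ and the anchor identity $\pi^\sharp[\rho,\sigma]_\pi=[\pi^\sharp\rho,\pi^\sharp\sigma]$ this yields $\pi^\sharp T(\rho,\sigma)=\mathcal{N}_\phi(\pi^\sharp\rho,\pi^\sharp\sigma)=0$, i.e.\ only that $T(\rho,\sigma)$ lies in $\ker\pi^\sharp$. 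When $\pi$ is degenerate --- the generic case --- this does not force $T=0$, and no amount of Leibniz-rule bookkeeping afterwards can recover the component of $T$ that $\pi^\sharp$ kills. For the same reason the asserted \emph{equivalence} between $\mathcal{N}_\phi=0$ and the dual identity is not correct as stated: $\mathcal{N}_\phi=0$ only controls $T$ modulo $\ker\pi^\sharp$, and the vanishing of the remaining part requires an essential use of the compatibility (Magri--Morosi concomitant) condition; this is exactly where the real work in the Kosmann-Schwarzbach--Magri computation lies. A clean way to repair your proof without establishing the full dual Nijenhuis identity is to use the standard tensorial identity expressing, for any bi-vector $\pi'$ and up to a universal constant, $[\pi',\pi']^{SN}(\rho,\sigma,\tau)$ as $\langle\tau,\,[\pi'^\sharp\rho,\pi'^\sharp\sigma]-\pi'^\sharp[\rho,\sigma]_{\pi'}\rangle$, together with its polarization for $[\pi,\pi']^{SN}$. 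Substituting $\pi'=\pi_\phi$, $\pi_\phi^\sharp=\phi\circ\pi^\sharp$ and the deformed-bracket expression for $[-,-]_{\pi_\phi}$, the expression for $[\pi_\phi,\pi_\phi]^{SN}$ collapses to $\langle\tau,\mathcal{N}_\phi(\pi^\sharp\rho,\pi^\sharp\sigma)\rangle=0$ and the one for $[\pi,\pi_\phi]^{SN}$ cancels identically; there the $\pi^\sharp$ is already built into the formula, so the computation you can actually carry out is exactly the one that is needed.
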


We now recall the definition of a generalized complex manifold \cite{H2003, G2011}. Let $M$ be a manifold. Denote by $\mathbb R_M := M \times \mathbb R \to M$ the trivial line bundle. The \emph{generalized tangent bundle} $\T M := TM \oplus T^\ast M$ is canonically equipped with the following structures:
\begin{itemize}
\item the projection $\operatorname{pr}_T : \mathbb TM \to TM$, 
\item the symmetric bilinear form $\langle \hspace{-2.7pt} \langle -,- \rangle \hspace{-2.7pt} \rangle : \mathbb TM \otimes \mathbb T M \to \mathbb R_M$:
\[
\langle \hspace{-2.7pt} \langle (\xi, \rho), (\zeta, \tau) \rangle \hspace{-2.7pt} \rangle := \tau (\xi) + \rho (\zeta),
\]
\item the \emph{Dorfman bracket} $[\![ -,-]\!] : \Gamma (\mathbb TM) \times \Gamma (\mathbb TM) \to \Gamma (\mathbb TM)$:
\[
[\![  (\xi, \rho), (\zeta, \tau)]\!] := ([\xi,\zeta], \Ll_\xi \tau - \Ll_\zeta \rho + d \rho (\zeta)),
\]
\end{itemize}
for all $\xi, \zeta \in \mathfrak X (M)$, $\rho, \tau \in \Omega^1 (M)$. With the above three structures $\T M$ is a \emph{Courant algebroid}.

\begin{definition}
A \emph{generalized complex manifold} is a manifold $M$ equipped with a \emph{generalized complex structure}, i.e.~a vector bundle endomorphism $\Jj : \T M \to \T M$ such that
\begin{itemize}
\item $\Jj$ is \emph{almost complex}, i.e.~$\Jj^2 = - \mathbb 1$,
\item $\Jj$ is \emph{skew-symmetric}, i.e.
\[
\langle \hspace{-2.7pt} \langle \Jj \alpha, \beta \rangle \hspace{-2.7pt} \rangle + \langle \hspace{-2.7pt} \langle \alpha, \Jj \beta \rangle \hspace{-2.7pt} \rangle = 0, \quad \text{for all }\alpha, \beta \in \Gamma (\T M),
\]
\item $\Jj$ is \emph{integrable}, i.e.
\[
[\![ \mathcal J \alpha, \mathcal J \beta]\!] - [\![ \alpha, \beta]\!] - \mathcal J [\![ \mathcal J \alpha, \beta]\!] + \mathcal J [\![ \alpha, \mathcal J \beta ]\!] = 0, \quad \text{for all }\alpha, \beta \in \Gamma (\T M).
\]
\end{itemize}
\end{definition} 
Let $(M, \Jj)$ be a generalized complex manifold. Using the direct sum decomposition $\T M = TM \oplus T^\ast M$, and the definition, one can see that 
\[
\Jj = \left(
\begin{array}{cc}
\phi & \pi^\sharp \\
\omega_\flat & - \phi^\ast
\end{array}
\right)
\]
where $\pi$ is a Poisson bi-vector, $\phi : TM \to TM$ is an endomorphism compatible with $\pi$, and $\omega$ is a $2$-form, with associated vector bundle morphism $\omega_\flat : TM \to T^\ast M$, satisfying additional compatibility conditions \cite{C2011,SX2007}. In particular, when $\omega = 0$, then $\phi$ is a complex structure, and $(\pi, \phi)$ is a Poisson-Nijenhuis structure. More precisely we have the following

\begin{theorem}[Characterization of holomorphic Poisson structures \cite{LSX2008}] \label{theor:hP_PN_gc}
Let $X = (M,j)$ be a complex manifold, and let $\Pi \in \Gamma (\wedge^2 (TM)^\C)$ be a complex bi-vector on $M$. Denote by $\pi',\pi$ the real and the imaginary part of $\Pi$ respectively: $\Pi = \pi' + i \pi$, $\pi',\pi \in \Gamma (\wedge^2 TM)$. The following conditions are equivalent
\begin{enumerate}
\item $\Pi$ is a holomorphic Poisson structure on $X$,
\item $(\pi, j)$ is a Poisson-Nijenhuis structure on $M$, and $\pi' = \pi_j$ (see Equation \ref{Eq:pi_phi}),  
\item $\Jj := \left(
\begin{smallmatrix}
j & \pi^\sharp \\
0 & - j^\ast
\end{smallmatrix}
\right)$ is a generalized complex structure on $M$, and $\pi' = \pi_j$.
\end{enumerate}
\end{theorem}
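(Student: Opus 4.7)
The plan is to establish $(1) \Leftrightarrow (2)$ and $(2) \Leftrightarrow (3)$ separately, by translating the defining conditions on each side into constraints on the triple $(\pi,\pi_j,j)$. The first equivalence requires analysis of the differential conditions $\bar\partial \Pi = 0$ and $[\Pi,\Pi]^{SN} = 0$, while the second follows almost immediately from the description of generalized complex structures with vanishing $2$-form part that was recalled just above the theorem.

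I would begin with an algebraic step common to all three items. Writing $\Pi^\sharp = (\pi')^\sharp + i\pi^\sharp$ on the complexified cotangent bundle, the condition $\Pi \in \Gamma(\wedge^2 T^{1,0}X)$ is equivalent to the pair of identities $j \circ \Pi^\sharp = i\Pi^\sharp$ and $\Pi^\sharp \circ j^\ast = i\Pi^\sharp$. Separating real and imaginary parts simultaneously yields the compatibility $j\circ \pi^\sharp = \pi^\sharp \circ j^\ast$ and the identification $(\pi')^\sharp = j\circ \pi^\sharp = \pi^\sharp \circ j^\ast$, i.e.\ $\pi' = \pi_j$ in the sense of Equation (\ref{Eq:pi_phi}). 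This takes care of the compatibility hypotheses implicit in (2) and (3) and identifies the real part of $\Pi$ with $\pi_j$ in all three items.

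For the integrability part of $(1) \Leftrightarrow (2)$, I would split both $\bar\partial \Pi = 0$ and $[\Pi,\Pi]^{SN} = 0$ into real and imaginary parts. Since $T^{1,0}X$ is involutive, $[\Pi,\Pi]^{SN}$ lies in $\Gamma(\wedge^3 T^{1,0}X)$; expanding $[\pi_j + i\pi,\pi_j + i\pi]^{SN}$ shows that vanishing of this bracket is equivalent to $[\pi_j,\pi_j]^{SN} = [\pi,\pi]^{SN}$ and $[\pi_j,\pi]^{SN} = 0$. The condition $\bar\partial\Pi = 0$ I would treat in local holomorphic coordinates $z^a = x^a + iy^a$: writing $\Pi = \Pi^{ab}\,\partial_{z^a}\wedge\partial_{z^b}$ and translating $\partial_{\bar z^c}\Pi^{ab} = 0$ back into the real coordinates $(x,y)$ gives, using $\mathcal N_j \equiv 0$, precisely the Magri--Morosi compatibility of $\pi$ and $j$. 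A short Schouten-calculus computation then combines the Magri--Morosi identity with the two relations extracted from $[\Pi,\Pi]^{SN} = 0$ to force both $[\pi,\pi]^{SN} = 0$ and $[\pi_j,\pi_j]^{SN} = 0$, yielding a Poisson Nijenhuis structure $(\pi,j)$. The reverse implication runs the same algebra backwards.

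For $(2) \Leftrightarrow (3)$, I would invoke the description of generalized complex structures with $\omega = 0$ already recalled above: a matrix of the shape of $\mathcal J$ is generalized complex if and only if $\phi$ is an integrable complex structure and $(\pi,\phi)$ is Poisson Nijenhuis. Applied to $\phi = j$, this gives the equivalence immediately; one only has to verify by a one-line computation that $\mathcal J^2 = -\mathbb 1$ is equivalent to $j\pi^\sharp = \pi^\sharp j^\ast$ and that skew-symmetry of $\mathcal J$ is automatic. The main obstacle is the step identifying $\bar\partial \Pi = 0$ with the Magri--Morosi compatibility between $\pi$ and $j$: this is the only step that does not reduce to pure algebra or to a citation of the structure theorem for generalized complex structures, and it requires careful bookkeeping in mixed real-and-complex local coordinates together with the use of $\mathcal N_j = 0$ to kill the torsion-like terms that appear.
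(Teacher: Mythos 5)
Your outline is essentially correct, but note first that the paper does not actually prove Theorem \ref{theor:hP_PN_gc}: it is recalled verbatim from \cite{LSX2008} and used as a black box (the only equivalence the paper argues itself is the homogeneous refinement, Theorem \ref{theor:hhP_hPN_hgc}, where the step $(2)\Leftrightarrow(3)$ is again delegated to \cite[Proposition 2.2]{C2011}). So there is no internal proof to compare against; what you have written is a reconstruction of the argument of Laurent-Gengoux, Sti\'enon and Xu, and it follows their route. Your algebraic preliminary is fine: by skew-symmetry of $\Pi$ the two identities $j\circ\Pi^\sharp=i\Pi^\sharp$ and $\Pi^\sharp\circ j^\ast=i\Pi^\sharp$ are equivalent to each other and to $\Pi\in\Gamma(\wedge^2T^{1,0}X)$, and separating real and imaginary parts gives exactly $j\circ\pi^\sharp=\pi^\sharp\circ j^\ast$ and $\pi'=\pi_j$. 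The identification of $\overline{\partial}\Pi=0$ with the Magri--Morosi compatibility is indeed the technical heart, and doing it in coordinates is what \cite{LSX2008} does. The one step I would press you on is the claim that a ``short Schouten-calculus computation'' extracts $[\pi,\pi]^{SN}=0$ from the Magri--Morosi identity together with $[\pi_j,\pi_j]^{SN}=[\pi,\pi]^{SN}$ and $[\pi_j,\pi]^{SN}=0$: these hypotheses do imply it, but the purely real computation is not short. The efficient route is to observe that $\overline{\partial}\Pi=0$ forces $[\Pi,\overline{\Pi}]^{SN}=0$ (in holomorphic coordinates every term of this bracket contains either $\overline{\partial}\Pi$ or its conjugate), and that
\[
[\Pi,\overline{\Pi}]^{SN}=[\pi_j,\pi_j]^{SN}+[\pi,\pi]^{SN},
\]
since the cross terms cancel by symmetry of the Schouten bracket on bi-vectors; combined with $[\pi_j,\pi_j]^{SN}-[\pi,\pi]^{SN}=0$, coming from $\operatorname{Re}[\Pi,\Pi]^{SN}=0$, this yields $[\pi,\pi]^{SN}=[\pi_j,\pi_j]^{SN}=0$ at once. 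With that substitution your outline closes completely; the step $(2)\Leftrightarrow(3)$ via the $\omega=0$ case of the matrix description of generalized complex structures is exactly Crainic's result, and your verification that $\Jj^2=-\mathbb 1$ amounts to $j\circ\pi^\sharp=\pi^\sharp\circ j^\ast$ while skew-symmetry is automatic is correct.
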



\subsection{ Homogeneous holomorphic Poisson, homogeneous Poisson-Nijenhuis and homogeneous generalized complex manifolds}\label{section:hhP}
In this section we consider generalized complex manifolds equipped with an additional compatible structure which we call a \emph{homogeneity vector field}. We will also discuss the relationship between holomorphic Poisson, Poisson-Nijenhuis, and generalized complex manifolds in presence of a homogeneity vector field.

\begin{definition}
A \emph{ homogeneous holomorphic Poisson manifold} is a  complex manifold $X = (M, j)$  equipped with a holomorphic Poisson structure $\Pi$ together with a holomorphic vector field $H$ such that $[H, \Pi]^{SN} = \Ll_H \Pi = - \Pi $. The pair $(\Pi, H)$ is called a \emph{ homogeneous holomorphic Poisson structure on $X$}, and we say that $\Pi$ is \emph{homogeneous} with respect to $H$.
\end{definition}

\begin{example}\label{ex:complex_Lie_algebra}
Let $\mathfrak g$ be a complex Lie algebra. Its complex dual $\mathfrak g^\ast$ is equipped with the holomorphic Lie-Poisson structure $\Pi$. By linearity, the holomorphic Euler vector field $H$ on $\mathfrak g^\ast$ is a homogeneity vector field for $\Pi$. Hence, $(\mathfrak g^\ast, \Pi, H)$ is a  homogeneous holomorphic Poisson manifold.
\end{example}

\begin{example}\label{ex:holomorphic_cotangent_bundle}
Let $X = (M, j)$ be a complex manifold, with local holomorphic coordinates $(z^i)$. The cotangent bundle $T^\ast X$ is coordinatized by the $z^i$'s and their conjugated momenta $p_i$. There is a canonical holomorphic symplectic structure $\Omega$ on $T^\ast X$ locally given by $\Omega = dp_i \wedge dz^i$. The associated Poisson structure $\Pi$ is locally given by $\Pi = \frac{\partial}{\partial z^i} \wedge \frac{\partial}{\partial p_i}$. The holomorphic Euler vector field $H$ on $T^\ast X$ is locally given by $H = p_i \frac{\partial}{\partial p_i}$ and it is a homogeneity vector field for $\Pi$. Hence, $(T^\ast X, \Pi, H)$ is a  homogeneous holomorphic Poisson manifold. More generally, let $(X, \Omega)$ be a holomorphic symplectic manifold, with associated Poisson structure $\Pi = \Omega^{-1}$. Additionally, let $H$ be a \emph{homogeneity vector field} for $\Omega$, i.e.~$H$ is a holomorphic vector field on $X$ such that $\mathcal L_H \Omega = \Omega$. Then $H$ is clearly a homogeneity vector field for $\Pi$, hence $(X, \Pi, H)$ is a  homogeneous holomorphic Poisson manifold.
\end{example}

\begin{example}\label{example:holomorphic_dual}
The present example encompasses Examples \ref{ex:complex_Lie_algebra} and \ref{ex:holomorphic_cotangent_bundle} as special cases. Let $A \to X$ be a holomorphic Lie algebroid (see Definition \ref{Def:HolomLieAlg} below, and reference \cite{LSX2008} for more details about holomorphic Lie algebroids). Its complex dual $A^\ast \to X$ is equipped with the holomorphic Lie-Poisson structure $\Pi$. By linearity, the holomorphic Euler vector field $H$ on $A^\ast$ is a homogeneity vector field for $\Pi$. Hence, $(A^\ast, \Pi, H)$ is a  homogeneous holomorphic Poisson manifold.
\end{example}

\begin{definition}
A \emph{homogeneous Poisson-Nijenhuis manifold} is a Poisson-Nijenhuis manifold $(M, \pi, \phi)$ equipped with a \emph{homogeneity vector field} for $(\pi, \phi)$, i.e.~a vector field $\eta$ such that $\Ll_\eta \pi = - \pi$, and $\Ll_\eta \phi = 0$. The triple $(\pi, \phi, \eta)$ is called a \emph{homogeneous Poisson-Nijenhuis structure}. 
\end{definition}

\begin{definition}\label{def:hgc}
A \emph{homogeneous generalized complex manifold} is a generalized complex manifold $(M, \Jj)$ equipped with a homogeneity vector field for
\[
\Jj = \left(
\begin{array}{cc}
\phi & \pi^\sharp \\
\omega_\flat & - \phi^\ast
\end{array}
\right)
\]
i.e.~a vector field $\eta$ such that, 1) $\Ll_\eta \pi = -\pi$, 2) $\Ll_\eta \phi = 0$, and 3) $\Ll_\eta \omega = \omega$. The pair $(\Jj, \eta)$ is called a \emph{homogeneous generalized complex structure}.
\end{definition}

\begin{theorem}\label{theor:hhP_hPN_hgc}
Let $X = (M,j)$ be a complex manifold, let $\Pi \in \Gamma (\wedge^2 (TM)^\C)$ be a complex bi-vector on $M$, and let $H \in \Gamma ((TM)^\C)$ be a complex vector field. Denote by $\pi',\pi$ the real and the imaginary part of $\Pi$ respectively: $\Pi = \pi' + i \pi$, $\pi',\pi \in \Gamma (\wedge^2 TM)$. Finally, let $\eta$ and $\eta'$ be twice the real and the imaginary part of $H$ respectively: $H = \frac{1}{2} \left(\eta + i \eta' \right)$, $\eta, \eta' \in \mathfrak X (M)$. Then, the following three conditions are equivalent
\begin{enumerate}
\item $(X,\Pi, H)$ is a  homogeneous holomorphic Poisson manifold,
\item $(M, \pi, j, \eta)$ is a homogeneous Poisson-Nijenhuis manifold, $\pi' = \pi_j$, and $\eta' = -j\eta$.
\item $(M, \Jj, \eta)$, where $\Jj := \left(
\begin{smallmatrix}
j & \pi^\sharp \\
0 & - j^\ast
\end{smallmatrix}
\right)
$, is a homogeneous generalized complex manifold, $\pi' = \pi_j$, and $\eta' = -j\eta$.
\end{enumerate}  
\end{theorem}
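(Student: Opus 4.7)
The strategy is to reduce to Theorem~\ref{theor:hP_PN_gc}, which already establishes the equivalence of the three conditions at the level of the underlying Poisson bi-vector, and then to verify that the homogeneity data $H$ and $\eta$ correspond under the asserted relation $\eta' = -j\eta$. The first ingredient is the classical complex-geometry fact that a complex vector field $H = \tfrac{1}{2}(\eta + i\eta')$ is a holomorphic section of $T^{1,0}X$ precisely when (a)~$H$ lies in $T^{1,0}X$, i.e.\ $jH = iH$ in the complex-linear extension of $j$, which expands to $\eta' = -j\eta$, and (b)~$\bar\partial H = 0$, which for a $(1,0)$-field of this form amounts to $\Ll_\eta j = 0$. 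Thus holomorphicity of $H$ in (1) is equivalent to the pair ``$\eta' = -j\eta$ and $\Ll_\eta j = 0$'' appearing (respectively explicitly and inside the homogeneous Poisson Nijenhuis / homogeneous generalized complex conditions) in (2) and (3).

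The heart of the proof is to show that, once $H$ is holomorphic and $\Pi$ is holomorphic Poisson, the condition $\Ll_H \Pi = -\Pi$ is equivalent to $\Ll_\eta \pi = -\pi$. For this, I would use the observation that $\Ll_H \bar\Pi = 0$ and $\Ll_{\bar H}\Pi = 0$, which is immediate in local holomorphic coordinates because brackets of holomorphic vector fields with $\partial/\partial\bar z^i$ vanish and holomorphic derivatives of anti-holomorphic functions vanish. Combined with $\eta = H + \bar H$ and $\pi = \tfrac{1}{2i}(\Pi - \bar\Pi)$, this yields the identity
\[
\Ll_\eta \pi \;=\; \tfrac{1}{2i}\bigl(\Ll_H \Pi \;-\; \Ll_{\bar H}\bar\Pi\bigr),
\]
so that $\Ll_\eta \pi + \pi = \tfrac{1}{2i}\bigl((\Ll_H\Pi + \Pi) - \overline{(\Ll_H\Pi + \Pi)}\bigr)$. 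Since $\Ll_H \Pi + \Pi$ is a section of $\wedge^2 T^{1,0}X$, and a $(2,0)$-bi-vector equal to its own complex conjugate must vanish, the vanishing of the real bi-vector on the left is equivalent to $\Ll_H \Pi + \Pi = 0$, giving both directions at once.

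Putting these pieces together, (1)$\Leftrightarrow$(2) follows by combining Theorem~\ref{theor:hP_PN_gc} (which handles the Poisson/Poisson Nijenhuis equivalence together with the relation $\pi' = \pi_j$) with the two translations above for the homogeneity datum. For (2)$\Leftrightarrow$(3), Theorem~\ref{theor:hP_PN_gc} already identifies Poisson Nijenhuis data $(\pi, j)$ with generalized complex structures $\Jj$ of the stated upper-triangular form, and the two sets of homogeneity conditions coincide: in Definition~\ref{def:hgc} the $2$-form $\omega$ is identically zero here, so the requirement $\Ll_\eta \omega = \omega$ is vacuous and what remains is exactly $\Ll_\eta \pi = -\pi$ and $\Ll_\eta j = 0$. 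I expect the main obstacle to be careful bookkeeping with types and conjugations; the only step that is not completely mechanical is the converse implication ``$\Ll_\eta \pi = -\pi \Rightarrow \Ll_H \Pi = -\Pi$'', and the type-decomposition argument above handles this without any additional integrability input.
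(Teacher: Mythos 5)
Your proposal is correct and follows essentially the same route as the paper: the paper also reduces to Theorem~\ref{theor:hP_PN_gc}, translates holomorphicity of $H$ into $\eta' = -j\eta$ and $\Ll_\eta j = 0$, and establishes $\Ll_H\Pi = -\Pi \Leftrightarrow \Ll_\eta\pi = -\pi$ by showing $\Ll_{\overline H}\Pi = 0$ (so that $\operatorname{Im}(\Ll_H\Pi) = \Ll_\eta\pi$) and then invoking the fact that a $(2,0)$-bivector vanishes iff its imaginary part does, while $(2)\Leftrightarrow(3)$ is handled exactly as you describe (citing the known Poisson Nijenhuis/generalized complex correspondence, with $\Ll_\eta\omega=\omega$ vacuous for $\omega=0$). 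Your packaging via the conjugation identity $\Ll_\eta\pi = \tfrac{1}{2i}(\Ll_H\Pi - \Ll_{\overline H}\overline\Pi)$ is just a mild rewording of the paper's imaginary-part computation, not a different argument.
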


\begin{proof} \

$(1) \Leftrightarrow (2)$. From Theorem \ref{theor:hP_PN_gc}, $(X,\Pi)$ is a holomorphic Poisson manifold if and only if $(M, \pi, j)$ is a Poisson-Nijenhuis manifold and $\pi' = \pi_j$. Moreover, $H$ is a holomorphic vector field if and only if it is a section of $T^{1,0} M$, whence $\eta' = -j \eta$, $\overline{\partial} H = 0$, and $\Ll_\eta j = 0$. Now, let $\Pi = \pi_j + i \pi$ be a holomorphic Poisson bi-vector and $H = \frac{1}{2}\left(\eta -ij \eta\right)$ a holomorphic vector field on $X$. It remains to check that $\Ll_H \Pi = - \Pi$ if and only if $\Ll_\eta \pi = -\pi$. To see this, for any complex multivector field $Z \in \Gamma (\wedge^\bullet (T M)^\C)$, let $Z^{k,l}$ be its projection onto $\Gamma (\wedge^k T^{1,0} M \oplus \wedge^l T^{0,1} M)$. Let $\overline H = \frac{1}{2} \left (\eta +ij \eta \right) \in \Gamma (T^{0,1} M)$ be the complex conjugate of $H$ and notice that,
\[
\Ll_{\overline H} \Pi = \overline{\partial}_{\overline{H}} \Pi + (\mathcal L_{\overline{H}} \Pi)^{1,1} + (\mathcal L_{\overline{H}} \Pi)^{0,2}.
\]
The latter expression vanishes identically. Indeed the first summand vanishes because $\Pi$ is holomorphic, the second summand vanishes because $H$ is holomorphic, hence $\overline{H}$ is anti-holomorphic. The third summand vanishes because $\Pi \in \Gamma (\wedge^2 T^{1,0} M)$ (use, e.g., local coordinates). It follows that
\begin{equation}\label{eq:im_L_Hbar}
\Ll_\eta \pi + \Ll_{j \eta} \pi_j = 2 \operatorname{Im}(\mathcal L_{\overline H} \Pi) = 0.
\end{equation}
Now, since $\Pi$ and $H$ are holomorphic, then $\Ll_H \Pi$ is holomorphic as well. In particular $\Ll_H \Pi$ and $-\Pi$ agree if and only if their imaginary parts agree. Finally
\[
\operatorname{Im}(\mathcal L_H \Pi) = \frac{1}{2} \left( \mathcal L_\eta \pi - \mathcal L_{j\eta} \pi_j\right) = \Ll_\eta \pi,
\]
where we used (\ref{eq:im_L_Hbar}). Hence $\mathcal L_H \Pi = - \Pi$ if and only if $\Ll_\eta \pi = - \pi$. This concludes the proof.

$(2) \Leftrightarrow (3)$. It immediately follows from \cite[Proposition 2.2]{C2011}.
\end{proof}

\begin{remark}
Let $(X, \Pi, H)$ be a homogeneous holomorphic Poisson manifold, with $X = (M,j)$, $\Pi = \pi_j + i \pi$ and $H = \frac{1}{2} (\eta -ij \eta)$. Then $(\pi, \pi_j, \eta)$ is a \emph{homogeneous bi-Hamiltonian structure}, i.e.~$(\pi, \pi_j)$ is a bi-Hamiltonian structure and $\eta$ is a homogeneity vector field for both $\pi$ and $\pi_j$. Additionally, looking at the real part of the identities $\Ll_H \Pi = - \Pi$ and $\Ll_{\overline H} \Pi = 0$ one easily sees that
\begin{equation}\label{eq:L_jeta_pi}
\Ll_{j\eta} \pi_j = \pi, \quad \text{and} \quad \Ll_{j\eta} \pi = - \pi_j.
\end{equation}
\end{remark}
  
\subsection{Holomorphic vector bundles, linear complex structures and the real gauge algebroid}

In the next subsection, we discuss the \emph{holomorphic gauge algebroid} of a holomorphic vector bundle (Definition \ref{def:holom_gauge}) from the real geometric point of view. It turns out that this description is simpler if one first revisits holomorphic vector bundles in terms of \emph{linear complex structures} on vector bundles (over a complex manifold). This is done in Lemma \ref{prop:holom}. In its turn, the holomorphic gauge algebroid plays a key role for holomorphic Jacobi and related structures (much as the real gauge algebroid plays a key role for standard Jacobi and related structures \cite{LOTV2014, V2015, VW2016}).

Let $X = (M,j)$ be a complex manifold. Both $T^{1,0}M$ and $T^{0,1}M$ are complex Lie algebroids, and a \emph{holomorphic vector bundle} $E \to X$ over $X$ can be seen as a complex vector bundle $E \to M$ equipped with a flat $T^{0,1}M$-connection. In particular there is an operator $\overline \partial : \Gamma (E) \to \Omega^{0,1} (X, E)$ whose kernel consists of \emph{holomorphic sections of $E$} (see, e.g., \cite{LSX2008}). 

\begin{definition}\label{Def:HolomLieAlg}
A \emph{holomorphic Lie algebroid} over $X$ is a holomorphic vector bundle $A \to X$ equipped with a \emph{holomorphic anchor} i.e.~a holomorphic vector bundle morphism $\rho: A \to TX$, and a $\C$-linear Lie bracket $[-,-] : \Gamma_A \times \Gamma_A \to \Gamma_A$ on its sheaf $\Gamma_A$ of holomorphic sections such that
\[
[\alpha_1, f\alpha_2] = \rho(\alpha_1) (f) \alpha_2 + f [\alpha_1, \alpha_2]  
\]
for all $\alpha_1, \alpha_2 \in \Gamma_A$ and $f \in \Oo_X$.
\end{definition}

In \cite{LSX2008} the authors prove that a holomorphic Lie algebroid $A \to X$ is equivalent to a holomorphic
vector bundle $A \to X$ equipped with a holomorphic vector bundle map $\rho : A \to TX$ and a \emph{real} Lie
algebroid structure, with anchor $\rho$ itself, and Lie bracket such that it restricts to a $\C$-bilinear Lie bracket on holomorphic sections. In what follows, given a holomorphic Lie algebroid $A \to X$, we denote by $A_{\mathrm{Re}} \to M$ the underlying real Lie algebroid. Now, denote by $j_A : A_{\mathrm{Re}} \to A_{\mathrm{Re}}$ the fiber-wise complex structure on $A_{\mathrm{Re}}$. The (Lie algebroid) Nijenhuis torsion of $j_A$ vanishes, i.e.
\[
[j_A \alpha, j_A \beta] - [\alpha, \beta] -j_A[j_A \alpha, \beta] -j_A [\alpha, j_A \beta] = 0,
\]
for all $\alpha, \beta \in \Gamma (A_{\mathrm{Re}})$. From a torsionless endomorphism $\phi : A_{\mathrm{Re}} \to A_{\mathrm{Re}}$ of a real Lie algebroid $A_{\mathrm{Re}}$ one can always define a new Lie algebroid structure $(A_{\mathrm{Re}})_\phi$ on $A_{\mathrm{Re}}$ with anchor $\rho_\phi := \rho \circ \phi$ and bracket $[-,-]_\phi$ defined by
\[
[\alpha, \beta]_\phi := [\phi \alpha, \beta] + [\alpha, \phi \beta] - \phi [\alpha, \beta].
\]
In particular, a holomorphic Lie algebroid $A \to X$ defines another real Lie algebroid $(A_{\mathrm{Re}})_{j_A}$. We call $A_{\mathrm{Re}}$ and $(A_{\mathrm{Re}})_{j_A}$ the \emph{real} and the \emph{imaginary Lie algebroids} of $A \to X$ respectively.

Finally, complexifying $A_{\mathrm{Re}}$ and decomposing into the eigenbundles of the complex structure, one also defines from $A \to X$ two complex Lie algebroids $A^{1,0} \to M$ and $A^{0,1} \to M$ with anchors $\rho^{1,0} : A^{1,0} \to T^{1,0}M$ and $\rho^{0,1}: A^{0,1} \to T^{0,1}M$, and Lie brackets $[-,-]^{1,0}$ and $[-,-]^{0,1}$. We refer to \cite{LSX2008} for the details.

\begin{example}
The tangent bundle $TX \to X$ is a holomorphic Lie algebroid, with underlying real Lie algebroid $TM \to M$, imaginary Lie algebroid $(TM)_j \to M$, and associated complex Lie algebroids $T^{1,0}M$ and $T^{0,1}M$.
\end{example} 

Given a holomorphic vector bundle $E \to X$ one can define its \emph{holomorphic gauge algebroid} $DE \to E$ (Definition \ref{def:holom_gauge}), encoding infinitesimal automorphisms of $E$. As already mentioned, we do this in the next subsection. Now recall what the real gauge algebroid of a real vector bundle is. Given a real vector bundle $E \to M$, and a point $x \in M$, the fiber over $x$ of the real gauge algebroid of $E$ consists of $\R$-linear maps $\Delta : \Gamma (E) \to E_x$ satisfying the following Leibniz rule: $\Delta (fe) = \xi(f) e_x + f(x) \Delta (e)$, for all $e \in \Gamma (E)$, all $f \in C^\infty (M)$, and a (necessarily unique) tangent vector $\xi \in T_x M$. We denote by $D_\R E$ the real gauge algebroid of $E \to M$ to distinguish it from the holomorphic gauge algebroid of $E \to X$ (that will be simply denoted by $DE$). Sections of $D_\R E$ are \emph{derivations of $E$} (also called \emph{covariant differential operators} in \cite{M2005}, see also \cite{Kosmann-Mackenzie}), i.e.~$\R$-linear operators $\Delta : \Gamma (E) \to \Gamma (E)$ satisfying the following \emph{Leibniz rule}: 
\begin{equation}\label{eq:gen_Leib}
\Delta (fe) = \xi(f) e + f \Delta (e), 
\end{equation}
for all $e \in \Gamma (E)$, all $f \in C^\infty (M)$, and a (necessarily unique) vector field $\xi \in \mathfrak X (M)$, called the \emph{symbol} of $\Delta$, and usually denoted $\sigma (\Delta)$.

\begin{remark}
Notice that the terminology ``\emph{derivation}'' for a section of $D_\R E$ (and like-wise ``complex derivation'' below) does not mean that sections of $E$ form an algebra. It only reflects the (generalized) Leibniz rule (\ref{eq:gen_Leib}) and is now rather standard in the literature.
\end{remark}

The gauge algebroid is a Lie algebroid with anchor given by the symbol $\sigma : DE \to TM$, and Lie bracket given by the commutator of derivations. The kernel of the symbol consists of $C^\infty (M)$-linear derivations, i.e.~endomorphisms of $E$ covering the identity of $M$. Hence there is a short exact sequence of vector bundles:

\[
0 \longrightarrow \operatorname{End}_\R E \longrightarrow D_\R E \overset{\sigma}{\longrightarrow} TM \longrightarrow 0.
\]

For more details about the gauge algebroid (including its functorial properties) we refer to \cite{LOTV2014,V2015}. We only recall here that sections of the gauge algebroid $DE \to M$ are in one-to-one correspondence with infinitesimal automorphisms of $E$, or, equivalently, \emph{linear vector fields} on (the total space of) $E$, i.e.~vector fields $\xi \in \mathfrak X (E)$ preserving fiber-wise linear functions on $E$. We denote by $\mathfrak X_{\mathrm{lin}} (E)$ the Lie algebra of linear vector fields on $E$. Every linear vector field $\xi \in \mathfrak X_{\mathrm{lin}} (E)$ projects to a vector field $\underline \xi \in \mathfrak X (M)$, and the linear vector field $\xi \in \mathfrak X (E)$ corresponds to the derivation $\Delta_\xi : \Gamma (E) \to \Gamma(E)$ implicitly defined by
\[
\langle \varphi , \Delta_\xi e \rangle := \underline \xi \langle \varphi, e \rangle - \langle \xi (\varphi), e \rangle, 
\]
for all $\varphi \in \Gamma(E_\R^\ast)$ and $e \in \Gamma(E)$ (here we identified sections of the real dual vector bundle $E_\R^\ast$ of $E$ with fiber-wise linear functions on $E$). The correspondence $\xi \mapsto \Delta_\xi$ is a Lie algebra isomorphism and, additionally, $\sigma (\Delta_\xi) = \underline \xi$. Finally, $C^\infty (M)$-linear derivations of $E$, i.e~endomorphisms, correspond to vertical linear vector fields on $E$.

Before giving a precise definition of the holomorphic gauge algebroid it is convenient to discuss \emph{linear $(1,1)$ tensors on a vector bundle}. Thus, let $E \to M$ be a vector bundle. Recall that $TE$ is a double vector bundle, with side vector bundles $E$ and $TM$ (see, e.g., \cite[Chapter 9]{M2005}).

\begin{definition}
A $(1,1)$ tensor $\phi : TE \to TE$ is \emph{linear} if it is a double vector bundle morphism, or, equivalently, if it preserves linear vector fields on $E$. 
\end{definition}

\begin{lemma}\label{prop:linear_end}
There is a $C^\infty (M)$-linear one-to-one correspondence $\phi \mapsto \phi_{DE}$, between linear $(1,1)$ tensors $\phi$ on $E$ and endomorphisms $\psi : DE \to DE$ with the following two properties:
\begin{enumerate}
\item there is a $(1,1)$ tensor $\underline \psi : TM \to TM$ such that $\underline \psi \circ \sigma = \sigma \circ \psi$, and
\item there is an endomorphism $\psi_E \in \Gamma (\operatorname{End}_\R E)$ such that $\psi (h) = \psi_E \circ h$ for every endomorphism $h \in \Gamma (\operatorname{End}_\R E)$. 
\end{enumerate} 
The correspondence $\phi \mapsto \phi_{DE}$ preserves the compositions, i.e.~$(\phi \circ \phi')_{DE} = \phi_{DE} \circ \phi'_{DE}$ for every two linear $(1,1)$ tensors on $E$.
\end{lemma}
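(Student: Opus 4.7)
The plan is to use the $C^\infty(M)$-linear Lie algebra isomorphism $\mathfrak X_{\mathrm{lin}}(E) \cong \Gamma(DE)$, $\xi \leftrightarrow \Delta_\xi$, recalled just before the statement, to transport the tensor $\phi$ to an endomorphism of $DE$. Since $\phi$, being a double vector bundle morphism, preserves linear vector fields and is $C^\infty(E)$-linear (hence $C^\infty(M)$-linear on pullbacks), the rule
\[
\phi_{DE}(\Delta_\xi) := \Delta_{\phi(\xi)}, \qquad \xi \in \mathfrak X_{\mathrm{lin}}(E),
\]
well-defines a $C^\infty(M)$-linear endomorphism of $\Gamma(DE)$, i.e., a vector bundle endomorphism $\phi_{DE}: DE \to DE$.

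To verify (1) and (2), I exploit the double vector bundle structure of $\phi$. As a double vector bundle morphism, $\phi$ covers a $(1,1)$ tensor $\underline \phi: TM \to TM$ on the second side bundle of $TE$; combined with the identity $\sigma(\Delta_\xi) = \underline \xi$, this yields (1) with $\underline \psi = \underline \phi$. On the other hand, $\phi$ preserves verticality (the kernel of the projection to $TM$), and its restriction to the vertical subbundle $VE$, read through the canonical identification $V_e E \cong E_x$ at each $e \in E_x$, is independent of $e$ and equals a single bundle endomorphism $\psi_E: E \to E$ (this is a consequence of the double linearity of $\phi$, using that fiberwise translations on $E$ commute with $\phi$). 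A direct pointwise computation then shows that $\phi$ sends the vertical linear vector field associated with $h \in \Gamma(\operatorname{End}_\R E)$ to the one associated with $\psi_E \circ h$, which is (2).

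For the inverse, given $\psi: DE \to DE$ satisfying (1) and (2), those properties themselves extract $\underline \phi: TM \to TM$ and $\psi_E \in \Gamma(\operatorname{End}_\R E)$. Reconstruct $\phi$ locally: in a trivializing chart of $E$ with linear fiber coordinates $(x^i, u^a)$, a $C^\infty(E)$-basis of $\mathfrak X(E)$ is $\{\partial/\partial x^i, \partial/\partial u^a\}$, where the $\partial/\partial x^i$ are linear vector fields and the $\partial/\partial u^a$ are vertical lifts of the corresponding basis sections $e_a$. Define $\phi(\partial/\partial x^i)$ to be the linear vector field corresponding to $\psi(\Delta_{\partial/\partial x^i}) \in \Gamma(DE)$, and $\phi(\partial/\partial u^a)$ to be the vertical lift of $\psi_E(e_a) \in \Gamma(E)$; then extend by $C^\infty(E)$-linearity. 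Composition preservation follows formally:
\[
(\phi \circ \phi')_{DE}(\Delta_\xi) = \Delta_{\phi(\phi'(\xi))} = \phi_{DE}(\Delta_{\phi'(\xi)}) = (\phi_{DE} \circ \phi'_{DE})(\Delta_\xi).
\]

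The main obstacle is to show that the local inverse prescription assembles into a globally well-defined, genuinely double-linear $(1,1)$ tensor which is two-sided inverse to the forward assignment. Under a change of trivialization $u^a \mapsto g^a_b(x) u^b$, the $\partial/\partial u^a$ transform into combinations mixing with $\partial/\partial x^i$, so the tensoriality of the local formulas under these transitions must be checked. This works precisely because property (1) makes the ``$TM$-diagonal'' part of $\psi$ tensorial on $M$, while property (2) ensures that the fibrewise action on vertical vectors is independent of the basepoint and given by a single $\psi_E$. A cleaner, invariant route chooses a linear splitting of $\sigma: DE \to TM$ (equivalently, a linear connection on $E$), displays $\phi$ and $\psi$ as $2 \times 2$ block matrices in the resulting decompositions of $TE$ and $DE$, and verifies the bijection blockwise.
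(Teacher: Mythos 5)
Your proposal is correct and follows essentially the same route as the paper: define $\phi_{DE}$ by restricting $\phi$ to linear vector fields via the isomorphism $\mathfrak X_{\mathrm{lin}}(E) \simeq \Gamma(D_\R E)$, read off property (1) from the base map $\underline\phi$ and property (2) from the fibrewise action on vertical vectors, and establish bijectivity (the inverse/surjectivity) by a local-coordinate check. Your treatment of the inverse is in fact somewhat more explicit than the paper's one-line appeal to local coordinates, but the underlying argument is the same.
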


\begin{proof}
It immediately follows from the definition that a linear $(1,1)$ tensor $\phi : TE \to TE$ defines an endomorphism $\phi_{DE} : D_\R E \to D_\R E$  by restriction to linear vector fields. Since $\phi$ is a morphism of double vector bundles, it descends to a $(1,1)$ tensor $\underline \phi : TM \to TM$ on $M$, and preserves vertical tangent vectors. Hence $\phi_{DE}$ has property (1) in the statement. Additionally, it satisfies $\phi_{DE} (h) \in  \Gamma (\operatorname{End}_\R E)$ for all $h \in \Gamma (\operatorname{End}_\R E) \subset \Gamma (D_\R E)$. Now, we define: $\phi_E := \phi_{DE} (\operatorname{id}_E) \in \Gamma (\operatorname{End}_\R E)$. It is then easy to see that, actually, $\phi_{DE} (h) = \phi_E \circ h$ for all $h \in  \Gamma (\operatorname{End}_\R E)$.
Hence $\phi_DE$ has also property (2). Since linear vector fields generate the whole $\mathfrak X (E)$, at least away from the zero section of $E \to M$, then the correspondence $\phi \mapsto \phi_{DE}$ is injective. Surjectivity is easily checked in local coordinates. The last part of the proposition is obvious.
\end{proof}

\begin{remark}\label{rem:linear_Nijenhuis}
Since the isomorphism $\mathfrak X_{\operatorname{lin}} (E) \to \Gamma (D_\R E)$ intertwines the Lie brackets, and linear vector fields generate all vector fields on $E$, away from the zero section of $E \to M$, then a linear $(1,1)$ tensor $\phi : TE \to TE$ is torsionless if and only if $\phi_{DE} : D_\R E \to D_\R E$ is torsionless.
\end{remark}

Now we need to further describe features of holomorphic vector bundles. This is done in Lemma \ref{prop:holom} below:

\begin{lemma}\label{prop:holom}
Let $M$ be a smooth manifold. The following data are equivalent:
\begin{enumerate}
\item a complex structure $j$ on $M$ and a holomorphic vector bundle $E \to X = (M,j)$;
\item a complex structure $j$ on $M$ and a complex vector bundle $E \to X = (M,j)$ equipped with a flat $T^{0,1}M$ connection;
\item a real vector bundle $E \to M$ equipped with an integrable linear complex structure $j^{\mathrm{tot}}_E : TE \to TE$ in its total space;
\item a complex structure $j$ on $M$ and a complex vector bundle $E \to M$, with fiber-wise complex structure $j_E : E \to E$, equipped with a torsionless complex structure $j_{DE}$ on its real gauge algebroid $D_\R E$ such that
\begin{enumerate}
	\item[(4b)] the symbol $\sigma : D_\R E \to TM$ intertwines $j_{DE}$ and $j$,
	\item[(4c)] the restriction of $j_{DE}$ to endomorphisms $\operatorname{End}_\R E$ agrees with the map $\operatorname{End}_\R E \to \operatorname{End}_\R E$, $\phi \mapsto j_E \circ \phi$.
	\end{enumerate}
\end{enumerate}
\end{lemma}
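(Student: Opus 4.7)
I would prove the chain $(1) \Leftrightarrow (2) \Leftrightarrow (3) \Leftrightarrow (4)$.

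The equivalence $(1) \Leftrightarrow (2)$ is the classical Koszul--Malgrange characterization, already recalled in the paragraph before Definition \ref{Def:HolomLieAlg}: a holomorphic vector bundle on $X = (M,j)$ is precisely a complex vector bundle on $M$ equipped with a flat Dolbeault operator, i.e.\ a flat $T^{0,1}X$-connection.

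For $(3) \Leftrightarrow (4)$, I would apply Lemma \ref{prop:linear_end} to the linear $(1,1)$ tensor $\phi = j^{\mathrm{tot}}_E$. That lemma supplies a composition-preserving bijection $\phi \mapsto \phi_{DE}$ between linear $(1,1)$ tensors on $E$ and endomorphisms of $D_\R E$ with properties (1) and (2) there. Setting $j_{DE} := (j^{\mathrm{tot}}_E)_{DE}$, the base tensor $\underline{\psi}$ and fiber endomorphism $\psi_E$ provided by that lemma are exactly the $j$ on $M$ and the $j_E$ on $E$ appearing in (4), and properties (4b) and (4c) are verbatim properties (1) and (2) of the lemma. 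Since $\phi \mapsto \phi_{DE}$ preserves compositions, the identity $(j^{\mathrm{tot}}_E)^2 = -\operatorname{id}_{TE}$ corresponds to $j_{DE}^2 = -\operatorname{id}_{D_\R E}$ together with $j^2 = -\operatorname{id}_{TM}$ and $j_E^2 = -\operatorname{id}_E$; and by Remark \ref{rem:linear_Nijenhuis} the integrability of $j^{\mathrm{tot}}_E$ transfers to the vanishing of the Nijenhuis torsion of $j_{DE}$.

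The heart of the lemma is the equivalence $(2) \Leftrightarrow (3)$. Given the data of (2), I would construct $j^{\mathrm{tot}}_E$ on $TE$ along the lines of the Koszul--Malgrange theorem: choose any smooth $TM$-connection $\nabla$ on $E$ whose $(0,1)$-part is the given flat one, use it to split $TE \cong p^*TM \oplus p^*E$, where $p : E \to M$ is the projection, and declare $j^{\mathrm{tot}}_E$ to act as $j$ on the horizontal summand and as $j_E$ on the vertical summand. A short check shows that the resulting linear endomorphism of $TE$ is independent of the choice of $\nabla$, since two such choices differ by a $(1,0)$-type endomorphism-valued one-form and the decomposition into $\pm i$-eigenbundles of $j^{\mathrm{tot}}_E$ is unaffected. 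Conversely, given $j^{\mathrm{tot}}_E$, descent along $p$ yields $j$ on $M$, restriction to vertical tangent vectors along the zero section yields $j_E$ on $E$, and the failure of $de$ to intertwine $j$ with $j^{\mathrm{tot}}_E$ produces, for any local section $e : M \to E$, a $(0,1)$-form $\overline{\partial} e$ with values in $E$, defining the desired $T^{0,1}X$-connection. The main obstacle is the integrability half: identifying the vanishing of the Nijenhuis torsion of $j^{\mathrm{tot}}_E$ with the combined conditions that $j$ be integrable on $M$ and $\overline{\partial}$ be flat. I would verify this in local holomorphic coordinates on $M$ together with a local frame of $E$, where $j^{\mathrm{tot}}_E$ takes a block-triangular form and the components of its Nijenhuis torsion decouple into the Nijenhuis torsion of $j$ in the base directions and the curvature of $\overline{\partial}$ in the mixed directions.
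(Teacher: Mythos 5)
Your proposal is correct, and for $(1)\Leftrightarrow(2)$ and $(3)\Leftrightarrow(4)$ it coincides with the paper's proof (both reduce $(3)\Leftrightarrow(4)$ to Lemma \ref{prop:linear_end} and Remark \ref{rem:linear_Nijenhuis}, with your extra observation that $(j_E^{\mathrm{tot}})^2=-\mathbb 1$ transfers through the composition-preserving bijection being exactly the point the paper leaves implicit). Where you genuinely diverge is the central equivalence $(2)\Leftrightarrow(3)$. The paper avoids any auxiliary choice: it characterizes a tangent vector $\xi\in T_eE$ by the pair consisting of $\pi_\ast\xi$ and the induced Leibniz-type operator on fiber-wise linear functions $\Gamma(E^\ast)\to\C$, and then writes down $j_E^{\mathrm{tot}}\xi$ intrinsically by the explicit formula (\ref{eq:eta}), which uses only the given $(0,1)$-operator $\overline\partial$ as a correction term; the payoff is the closed formula (\ref{eq:j_DE}) for $j_{DE}$, which the paper reuses later (Remark \ref{rem:j_DE}, the description of $\iota^{-1}$). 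You instead extend $\overline\partial$ to a full connection $\nabla$, use the horizontal splitting $TE\cong p^\ast TM\oplus p^\ast E$, define $j_E^{\mathrm{tot}}$ blockwise, and check independence of the extension; this is the classical Koszul--Malgrange route, arguably more geometric and with the integrability statement (Nijenhuis torsion of $j_E^{\mathrm{tot}}$ versus $N_j$ and the $(0,2)$-curvature) reduced to a standard computation, at the cost of a choice plus a well-definedness check. One small caveat in your version: for the independence argument to work as stated, the extension $\nabla$ must be taken $\C$-linear (commuting with $j_E$), so that the difference of two extensions is an $\operatorname{End}_{\C}E$-valued form of type $(1,0)$ and hence intertwines $j$ and $j_E$ on the correction term; with a merely $\R$-linear extension the two block structures would not agree. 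This is easily arranged and does not affect the validity of the argument.
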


\begin{proof}

The equivalence $(1) \Leftrightarrow (2)$ is standard and has already been mentioned at the beginning of this section.
The equivalence $(1) \Leftrightarrow (3)$ is clear: if $E \to X$ is a holomorphic vector bundle, in particular $E$ is a complex manifold. If we denote by $j^{\mathrm{tot}}_E$ the complex structure on $E$, the linearity can be checked in local coordinates. For the converse, one can argue in a similar way and we leave the details to the reader.  
Finally $(3) \Leftrightarrow (4)$ immediately follows from Lemma \ref{prop:linear_end} and Remark \ref{rem:linear_Nijenhuis}.

 \end{proof}

 \begin{remark}

 The equivalence $(2) \Leftrightarrow (3)$ in Lemma \ref{prop:holom} can also be proved directly. For some purposes, this proof is useful, and we provide it here. As a byproduct, we will have a description of the $T^{0,1}M$-connection in $E$ in terms of the complex structure $j^{\mathrm{tot}}_E$, and vice-versa. 
 
 So, begin with a complex vector bundle $E \to M$ over a complex manifold $X = (M,j)$, and a flat $T^{0,1}M$-connection $\overline \partial$ in $E$. We denote by $j_E : E \to E$ the fiber-wise complex structure on $E$. Let $E^\ast \to M$ be the complex dual to $E$. The connection $\overline \partial$ induces a flat connection in $E^\ast$, also denoted by $\overline \partial$. In order to define $j_E^{\mathrm{tot}}$, denote by $\pi : E \to M$ the projection, let $e \in E$ and $x = \pi (e)$. Every tangent vector $\xi \in T_e E$ is a derivation $\xi : C^\infty (E) \to \mathbb R$, and can be extended to a self-conjugate  derivation, also denoted by $\xi : C^\infty (E, \C) \to \C$, by $\C$-linearity. On the other hand, every self-conjugate complex derivation $\xi : C^\infty (E, \C) \to \C$ determines a tangent vector $\xi \in T_e E$ by restriction to real functions. A self-conjugate derivation $\xi : C^\infty (E, \C) \to \C$ is completely determined by its action on fiber-wise constant functions, i.e.~functions in $C^\infty (M, \C)$, and on fiber-wise linear functions, i.e.~sections of $E^\ast$ (notice, however, that this is not so for non self-conjugate derivations). Conversely, a triple $(e, \underline \eta, \eta)$ consisting of 
\begin{enumerate}
\item a point $e \in E$,
\item a tangent vector $\underline \eta \in T_x M$, $x = \pi (e)$, and
\item a $\C$-linear operator $\eta : \Gamma (E^\ast) \to \C$,
\end{enumerate}
such that
\begin{equation}\label{eq:LR}
\eta (f \varphi) = \underline \eta (f) \langle \varphi_x , e \rangle + f(x) \eta (\varphi),
\end{equation}
for all $f \in C^\infty (M, \C)$ and $\varphi \in \Gamma (E^\ast)$, comes from a unique $\xi \in T_e E$ such that $\underline \eta = (d \pi) (\xi)$, and $\eta$ is the restriction of $\xi : C^\infty (E, \mathbb C) \to \mathbb C$ to fiber-wise linear complex functions. So let $e \in E$, and $\xi \in T_e E$, and define 
\[
\underline \eta   \in T_x M, \quad \text{and} \quad
\eta  : \Gamma (E^\ast) \to \C, 
\]
as follows. Firstly, write $\underline \eta := j(\pi_\ast \xi)$. Write also $(\pi_\ast \xi)^{0,1}:= p^{0,1} (\pi_\ast \xi) = (\pi_\ast \xi + i\underline \eta)/2$ where $p^{0,1} : T_x M \otimes_\R \C \to T^{0,1}_x X$ is the projection. Secondly,  write
\begin{equation}\label{eq:eta}
\eta (\varphi) := i \left( \xi (\varphi ) - 2 \langle \overline \partial_{(\pi_\ast \xi)^{0,1}} \varphi,  e\rangle \right)
\end{equation}
for all $\varphi \in \Gamma (E^\ast)$. An easy computation shows that $\eta$ satisfies the Leibniz rule (\ref{eq:LR}). Hence $(e, \underline \eta, \eta)$ defines a tangent vector $\xi' \in T_e E$. We write $j_E^{\mathrm{tot}} \xi := \xi'$. Checking that $(j_E^\mathrm{tot})^2 = - \mathbb 1$ is straightforward. Thus we have an almost complex structure $j_E^\mathrm{tot}: TE \to TE$ on $E$. Now, the integrability of $j_E^\mathrm{tot}$ follows from the flatness of $\underline \partial$. The linearity of $j_E^\mathrm{tot}$ immediately follows from (\ref{eq:eta}) and the fact that $d\pi$ maps $\xi'$ to $\underline \eta$. In particular, $j_E^{\mathrm{tot}}$ descends to $j$ under $d\pi : TE \to TM$ and agrees with $j_E$ on fibers.

Conversely, let $j_E^\mathrm{tot} : TE \to TE$ be a linear complex structure. Then it descends to a complex structure $j$ on $M$ under $d\pi : TE \to TM$ and restricts to a fiber-wise complex structure $j_E : E \to E$ on fibers.  Define a $T^{0,1}M$-connection in $E^\ast$ as follows.  For $\varphi \in \Gamma (E^\ast)$, and $\underline \eta^{0,1} \in T^{0,1} M$, we set:
\begin{equation}\label{Eq:overlinebar}
\langle \overline \partial_{\underline \eta^{0,1}} \varphi, e\rangle = \tilde \xi(\varphi ),
\end{equation}
where $\tilde \xi \in T^{0,1} E$ is any tangent vector such that $\pi_\ast \tilde \xi = \underline \eta^{0,1}$. The $\C$-linearity of $\varphi$ guarantees that $\overline \partial_{\underline \eta^{0,1}} \varphi$ is independent of the choice of $\tilde \xi$. Now, the linearity in the argument $e$ follows from the linearity of $j_E^\mathrm{tot}$.  The linearity in the argument $\underline \eta^{0,1}$ is obvious, and the Leibniz rule with respect to the argument $\varphi$ follows from the fact that $d\pi$ intertwines $j_E^\mathrm{tot}$ and $j$ (by definition of $j$). The flatness of $\overline \partial$ is equivalent to  the integrability of $j_E^{\mathrm{tot}}$. So $\overline \partial$ is a flat $T^{0,1} M$-connection in $E^\ast$. By duality, it induces a flat $T^{0,1} M$-connection in $E$. Comparing (\ref{eq:eta}) and (\ref{Eq:overlinebar}) we see that this construction inverts the above contruction of $j_e^{\mathrm{tot}}$ from $\overline \partial$. 
\end{remark}
 
 \begin{remark}\label{rem:j_DE}
 A direct computation exploiting Equation (\ref{eq:eta}) shows that 
 \begin{equation}\label{eq:j_DE}
 j_{DE} (\Delta) = j_E \circ (\Delta - 2 \overline \partial_{\sigma (\Delta)^{0,1}}),
 \end{equation}
 for all $\Delta \in D_\R E$. Formula (\ref{eq:j_DE}) can be used to prove directly the equivalence between (2) and (4) in Lemma \ref{prop:holom}. Notice that, despite $ j_{DE} \Delta \in D_\R E$ for all $\Delta \in D_\R E$, none of the two summands in the right hand side of (\ref{eq:j_DE}) is in $D_\R E$. Finally, it immediately follows from Formula (\ref{eq:j_DE}) that $j_{DE}$ preserves $\C$-linear sections of $D_\R E$, i.e.~those sections commuting with $j_E$.
\end{remark}

\subsection{The holomorphic gauge algebroid}
 
Let $E \to X$ be a holomorphic vector bundle over a complex manifold $X = (M,j)$.  Lemma (\ref{prop:holom}) shows that the gauge algebroid $D_\R E$ is equipped with a torsionless complex structure $j_{DE}$. Additionally $j_{DE}$ restricts to the subbundle $DE$ consisting of $\C$-linear derivations (see Remark \ref{rem:j_DE}). We will show below that $DE$ is a holomorphic Lie algebroid over $X$, whose underlying real Lie algebroid structure $(DE)_{\mathrm{Re}}$ is obtained from $D_\R E$ by restriction. To see this, we first describe $DE$ in an alternative way.

First of all, sections of the complexified tangent bundle $(T M)^\C$ can be seen as derivations of the complex algebra $C^\infty (M, \C)$. They are also real derivations $\xi$ of the real vector bundle $\C_M := M \times \C  \to M$ such that $X(1) = X(i) = 0$, and we denote them by $\mathfrak X_\C (M)$. Clearly, $(T M)^\C$ is a complex Lie algebroid whose anchor is the identity and whose Lie bracket is the commutator. Now, let $E \to M$ be a complex vector bundle over a smooth manifold (beware not yet a holomorphic vector bundle over a complex manifold). Denote by $D_{\C} E \to M$ the bundle whose sections are $\C$-linear operators $\Delta : \Gamma (E) \to \Gamma (E)$ such that there exists (a necessarily unique) $\sigma (\Delta) \in \mathfrak X_\C (M)$ such that:
\[
\Delta (f e)= \sigma (\Delta)(f) e + f \Delta (e),
\]
for all $f \in C^\infty (M, \C)$, and all $e \in \Gamma (E)$. Sections of the vector bundle $D_\C E \to X$ are called \emph{complex derivations} of $E$. Clearly, $D_\C E$ is a complex Lie algebroid with anchor $\sigma$ and Lie bracket the commutator of complex derivations. The complex structure on the vector bundle $D_\C E \to M$ is the composition with $j_E : E \to E$. The kernel of the symbol map $\sigma : D_\C E \to (TM)^\C$ is the bundle of $\C$-linear endomorphisms of $E$. In sum,  there is a short exact sequence of  complex vector bundles:
\[
0 \longrightarrow \operatorname{End}_\mathbb C E \longrightarrow D_\C E \overset{\sigma}{\longrightarrow} (T M)^\C \longrightarrow 0,
\] 
where $\operatorname{End}_\mathbb C E$ denotes (fiber-wise) $\C$-linear endomorphisms of $E$.

\begin{remark}
Notice that complex derivations of $E$ are not \emph{real} derivations of $E$, i.e. derivations of $E$, when considered as a real vector bundle. More precisely, a complex derivation is a real one if and only if its symbol is a real vector field.
\end{remark}

Now, we assume that $M$ is equipped with a complex structure $j$ and write $X = (M,j)$. In this case, $(TM)^\C = T^{1,0} M \oplus T^{0,1} M$. Denote $D^{1,0} E := \sigma^{-1} (T^{1,0} M)$ and $  D^{0,1} E := \sigma^{-1} (T^{0,1} M)$. We have:
\begin{equation}\label{eq:1}
D_\C E =   D^{1,0} E  +   D^{0,1} E,
\end{equation}
but $  D^{1,0} E \cap   D^{0,1} E = \operatorname{End}_\C E$, so (\ref{eq:1}) is not a direct sum decomposition. However, we can ``correct it'' to a direct sum decomposition if $E$ is a holomorphic vector bundle over $X$, i.e.~if there is a flat $T^{0,1}M$-connection $\overline \partial$ in $E$. Indeed, the connection $\overline{\partial}$ splits the short exact sequence $0 \to \operatorname{End}_\C E \to   D^{0,1} E \to T^{0,1} M \to 0$, hence there is a direct sum decomposition:
\begin{equation}\label{eq:split_D}
D_\C E =   D^{1,0} E \oplus T^{0,1} M,
\end{equation}
and the formula
\[
\overline{\partial}{}^{DE}_\xi \Delta := [\overline \partial_\xi, \Delta]^{1,0},
\]
where $\xi \in \Gamma (T^{0,1}M)$, and $\Delta \in \Gamma (D^{1,0}E)$, defines a flat $T^{0,1}M$ connection in $D^{1,0}E$, which makes it a holomorphic vector bundle over $X$. Here and i what follows  we denote by $\Delta \mapsto \Delta^{1,0}$ the projection $D_\C E \to   D^{1,0} E$ with kernel $\operatorname{im} \overline \partial \cong T^{0,1} M$. 


\begin{lemma}\label{lem:D^1,0=D}
There is a natural complex vector bundle isomorphism $\iota :  D^{1,0} E \to DE$ such that: 
\begin{enumerate}
\item the diagram
\[
\begin{array}{c}
\xymatrix{ D^{1,0}E \ar[r]^-{\iota} \ar[d]_-{\sigma} & D E \ar[d]^-{\sigma} \\
                  T^{1,0} M \ar[r]^-{2 \operatorname{Re}}  & TM }
                  \end{array},
\]
commutes
\item 
\[
\iota [\Delta_1, \Delta_2] = [ \iota \Delta_1, \iota \Delta_2]
\] 
for every two \emph{holomorphic} sections of $D^{1,0} E \to X$.
\end{enumerate}
\end{lemma}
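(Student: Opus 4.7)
My plan is to build $\iota$ directly from the splitting (\ref{eq:split_D}). For $\Delta \in D^{1,0}E$ with symbol $\xi := \sigma(\Delta) \in T^{1,0}X$, I will set
\[
\iota(\Delta) := \Delta + \overline\partial_{\bar\xi},
\]
where $\bar\xi \in T^{0,1}X$ denotes the complex conjugate of $\xi$. Both summands are $\C$-linear operators $\Gamma(E) \to \Gamma(E)$, but their (complex) symbols add up to $\xi + \bar\xi = 2\operatorname{Re}(\xi) \in TM$, so $\iota(\Delta)$ obeys a Leibniz rule with \emph{real} symbol and therefore defines a $\C$-linear real derivation of $E$. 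Hence $\iota(\Delta) \in DE$, and the commutativity of the diagram in item (1) is built into the construction.

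Next, I will verify that $\iota$ is $\C$-linear. The complex structure on $D^{1,0}E$ is $\Delta \mapsto j_E \circ \Delta$ (with symbol $\xi \mapsto i\xi$), while that on $DE$ is $j_{DE}$, computed by (\ref{eq:j_DE}). Since $\sigma(\iota(\Delta))^{0,1} = \bar\xi$, one gets $j_{DE}(\iota(\Delta)) = j_E \circ (\Delta - \overline\partial_{\bar\xi})$, which matches $\iota(j_E \circ \Delta) = j_E \circ \Delta + \overline\partial_{\overline{i\xi}} = j_E \circ (\Delta - \overline\partial_{\bar\xi})$. Bijectivity is immediate from the pointwise decomposition $(TM)^{\C} = T^{1,0}X \oplus T^{0,1}X$: the inverse sends $\Delta' \in DE$ with real symbol $\eta = \eta^{1,0} + \eta^{0,1}$ to $\Delta' - \overline\partial_{\eta^{0,1}} \in D^{1,0}E$.

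For the bracket identity on holomorphic sections I will expand bilinearly
\[
[\iota(\Delta_1),\iota(\Delta_2)] = [\Delta_1,\Delta_2] + [\Delta_1,\overline\partial_{\bar\xi_2}] + [\overline\partial_{\bar\xi_1},\Delta_2] + [\overline\partial_{\bar\xi_1},\overline\partial_{\bar\xi_2}]
\]
and treat each piece in the splitting (\ref{eq:split_D}). Flatness of $\overline\partial$ turns the last commutator into $\overline\partial_{[\bar\xi_1,\bar\xi_2]} = \overline\partial_{\overline{[\xi_1,\xi_2]}}$, which is exactly the correction needed for $\iota([\Delta_1,\Delta_2])$. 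It remains to kill the two cross commutators. Because each $\Delta_i$ is a holomorphic section of $D^{1,0}E$, the $(1,0)$-component $[\overline\partial_{\bar\xi_j},\Delta_i]^{1,0} = \overline\partial^{DE}_{\bar\xi_j}\Delta_i$ vanishes; on the other hand, holomorphy of $\Delta_i$ also forces the symbol $\xi_i$ to be a holomorphic vector field, so the commutator symbol $[\bar\xi_j,\xi_i]$ vanishes in $(TM)^{\C}$ (a short calculation in local holomorphic coordinates shows that a holomorphic vector field Lie-commutes with any section of $T^{0,1}X$ having anti-holomorphic coefficients). Therefore the two cross commutators lie in $\overline\partial(T^{0,1}X)$ with vanishing symbol, hence are zero.

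The main obstacle is this vanishing of the cross terms. It crucially exploits \emph{both} that $\Delta_i$ is holomorphic as a section of $D^{1,0}E \to X$ (to kill the $(1,0)$-component of $[\overline\partial_{\bar\xi_j},\Delta_i]$ via the definition of $\overline\partial^{DE}$) \emph{and} the consequence that the induced symbol $\xi_i$ is a holomorphic vector field (to kill the $\overline\partial(T^{0,1}X)$-component through $[\bar\xi_j,\xi_i]=0$). Without this double input the proof of (2) would fail.
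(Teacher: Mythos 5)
Your proposal is correct and takes essentially the same route as the paper: the map $\iota\Delta := \Delta + \overline\partial_{\overline{\sigma(\Delta)}}$ is exactly the paper's definition, and items (1) and the symbol bookkeeping match. The only differences are cosmetic — you exhibit the explicit inverse $\Delta' \mapsto \Delta' - \overline\partial_{\sigma(\Delta')^{0,1}}$ (which the paper records only in a subsequent remark, proving bijectivity instead by injectivity plus a rank count), and you carry out in full the bracket computation for (2), correctly isolating the two inputs (vanishing of $\overline\partial{}^{DE}_{\bar\xi_j}\Delta_i$ and holomorphy of the symbols $\xi_i$) that the paper dismisses as ``a similar direct computation.''
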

 
 \begin{proof}
Define $\iota : D^{1,0} E \to D E$ by
\[
\iota \Delta  :=  \Delta + \overline \partial_{\overline{\sigma (\Delta)}},
\]
where by $\overline \xi$ we mean the complex conjugate of a complex vector field $\xi \in \mathfrak X_\C (M)$. Clearly, $\iota$ is a well-defined homomorphism of complex vector bundles. It is easy to see that it is injective. Indeed it follows from $\Delta + \overline \partial_{\overline{\sigma (\Delta)}}  = 0$, that
\[
0 = \sigma (\Delta + \overline \partial_{\overline{\sigma (\Delta)}}) = 2 \operatorname{Re} (\sigma (\Delta)).
\]
Since $\sigma (\Delta) \in \Gamma (T^{1,0}M)$, this is enough to have $\sigma (\Delta) = 0$. Hence $\Delta = \Delta + \overline \partial_{\overline{\sigma (\Delta)}} = \iota \Delta = 0$. Since $DE$ and $ D^{1,0} E$ have the same complex rank $\dim_\C M + (\operatorname{rank}_\C E)^2$, then $\iota$ is an isomorphism.

Now, (1) follows from 
\[
\sigma (\iota \Delta) = \sigma(\Delta + \overline \partial_{\overline{\sigma (\Delta)}}) =  2 \operatorname{Re} (\sigma (\Delta)).
\]
Condition (2) can be easily checked by a similar direct computation.
\end{proof}

\begin{remark}
The inverse $\iota^{-1} : DE \to D^{1,0}E$ of the isomorphism $\iota$ is given by:
\[
\iota^{-1} \Delta = \Delta - \overline{\partial}_{\sigma(\Delta)^{0,1}} = \frac{1}{2} (\Delta - j_E \circ j_{DE} \Delta),
\]
where $\Delta \in \Gamma (DE)$.
\end{remark}
\begin{corollary}
There is a holomorphic Lie algebroid structure $DE \to X$ such that:
\begin{enumerate}
\item the underlying real Lie algebroid structure is induced from that of $D_\R E$ by restriction (of the anchor and the bracket),
\item the associated complex Lie algebroid $(DE)^{1,0}$ is canonically isomorphic to $D^{1,0}E$.
\end{enumerate}
\end{corollary}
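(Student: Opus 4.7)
The plan is to transport structures from $D^{1,0}E$, which the preceding discussion already equips with a holomorphic vector bundle structure over $X$, over to $DE$ via the isomorphism $\iota$ of Lemma~\ref{lem:D^1,0=D}, and then recognize the resulting object as a holomorphic Lie algebroid by appealing to the real-geometric characterization recalled just after Definition~\ref{Def:HolomLieAlg}: according to \cite{LSX2008}, a holomorphic Lie algebroid amounts to a holomorphic vector bundle equipped with a real Lie algebroid structure whose anchor is holomorphic and whose bracket restricts to a $\C$-bilinear bracket on holomorphic sections.

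First, since $\iota: D^{1,0}E \to DE$ is a complex vector bundle isomorphism, the flat $T^{0,1}X$-connection $\overline{\partial}{}^{DE}$ on $D^{1,0}E$ transports to a flat $T^{0,1}X$-connection on $DE$, making the latter a holomorphic vector bundle over $X$ whose holomorphic sections correspond under $\iota$ to those of $D^{1,0}E$. Next, I endow $DE$ with the real Lie algebroid structure coming from $D_\R E$ by restriction: this is well-defined because the commutator of two $\C$-linear derivations is again $\C$-linear, so $DE$ is a real Lie subalgebroid of $D_\R E$ with anchor the restriction of the symbol. It remains to verify the two conditions above.

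For the holomorphicity of the anchor, part~(1) of Lemma~\ref{lem:D^1,0=D} says that, via $\iota$, the anchor $\sigma: DE \to TM$ corresponds to $2\operatorname{Re}\circ \sigma: D^{1,0}E \to TM$, so it suffices to check that the symbol $\sigma: D^{1,0}E \to T^{1,0}X$ is holomorphic with respect to $\overline{\partial}{}^{DE}$ and the standard Dolbeault operator on $T^{1,0}X$. This is a short computation using the defining formula $\overline{\partial}{}^{DE}_\xi \Delta := [\overline{\partial}_\xi, \Delta]^{1,0}$, the fact that the symbol is a Lie algebra homomorphism on $D_\C E$, and the compatibility of the $(1,0)$-projection built from the splitting~(\ref{eq:split_D}) with the $(1,0)$-projection in $(TM)^\C$. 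For the compatibility of the bracket, part~(2) of Lemma~\ref{lem:D^1,0=D} asserts precisely that $\iota$ intertwines commutators on holomorphic sections, and the commutator is tautologically $\C$-bilinear. Together, these give the desired holomorphic Lie algebroid structure on $DE \to X$.

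Assertion~(1) of the corollary is immediate from the construction. Assertion~(2) follows because, under the transport through $\iota$, the complex Lie algebroid $(DE)^{1,0}$ attached to the holomorphic Lie algebroid $DE$ gets identified, as a complex Lie subalgebroid of $D_\C E$, with $D^{1,0}E$ equipped with the commutator bracket inherited from $D_\C E$. The main obstacle is the holomorphicity check for $\sigma: D^{1,0}E \to T^{1,0}X$, which requires keeping careful track of the two distinct roles played by the $(1,0)$-projection (inside $D_\C E$ via the splitting, and inside $(TM)^\C$); once this bookkeeping is handled, everything else is a direct consequence of Lemma~\ref{lem:D^1,0=D}.
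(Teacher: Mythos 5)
Your proposal is correct and follows essentially the same route as the paper: the Corollary is deduced directly from Lemma \ref{lem:D^1,0=D} by using $\iota$ to carry the holomorphic vector bundle structure of $D^{1,0}E$ over to $DE$, taking the real Lie algebroid structure on $DE$ by restriction from $D_\R E$, and invoking the real-geometric characterization of holomorphic Lie algebroids from \cite{LSX2008} together with parts (1) and (2) of the Lemma. You spell out the holomorphicity check for the symbol a bit more explicitly than the paper does, but the argument is the intended one.
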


\begin{definition}\label{def:holom_gauge}
The holomorphic Lie algebroid $DE \to X$ is  called the \emph{holomorphic gauge algebroid} of the holomorphic vector bundle $E$.
\end{definition}

\begin{remark}
Holomorphic sections of $DE \to X$ are precisely derivations of the sheaf of holomorphic sections of $E \to X$.
\end{remark}

\begin{remark}
The holomorphic gauge algebroid $DE$, its associated complex algebroid $ D^{1,0} E$, and the \emph{complex gauge algebroid} $D_\C E$ fit in the following triangle of short exact sequences:
\[ 
\begin{array}{c}
\xymatrix@C=7pt{ & 0 \ar[rr] & & \operatorname{End}_{\mathbb C} E \ar[rr] \ar@{=}[dd] \ar@{=}[dl]  & & D^{1,0} E \ar[rr] \ar[dd]^(.3){\iota}  & & T^{1,0}M \ar[rr] \ar[dd]^(.3){2 \operatorname{Re}} & & 0 \\
0 \ar[rr] & & \operatorname{End}_{\mathbb C} E \ar[rr]|!{[ur];[dr]}\hole \ar@{=}[dr] & & D_\C E \ar[rr]|!{[ur];[dr]}\hole \ar[ur] & & (TM)^\C \ar[rr]|!{[ur];[dr]}\hole \ar[ur] & & 0 &  \\
& 0 \ar[rr] & & \operatorname{End}_{\mathbb C} E \ar[rr] & & D E \ar[rr] \ar[ul]  & & TX \ar[rr]  \ar[ul] & & 0}
\end{array}, 
\]
where the vertical arrows are isomorphisms. Clearly, the bottom row is also isomorphic to $0 \to \operatorname{End}_\mathbb C E \to D^{0,1} E \to T^{0,1} M \to 0$. Finally, the decomposition (\ref{eq:split_D}) gives $(D^{1,0}E, T^{0,1}M)$ the structure of a \emph{matched pair of Lie algebroids}. The latter is an instance of the matched pair of Lie algebroids associated to a holomorphic Lie algebroid $A$ \cite{LSX2008}. In the present case $A = DE$.
\end{remark}

\subsection{Holomorphic jets}
In this section we briefly discuss first holomorphic jets of a holomorphic vector bundle from the real differential geometric point of view. In the theory of Jacobi structures the first jet bundle plays a \emph{dual role} to the gauge algebroid. First of all, let $E \to M$ be a real vector bundle. We denote by $\mathfrak J^1_\R E$ the first (real) jet bundle of $E \to M$, and by $\mathfrak j^1_\R : \Gamma (E) \to \Gamma(\mathfrak J^1_\R E)$, $e \mapsto \mathfrak j^1_\R e$ the first (real) jet prolongation of sections of $E \to M$. Recall that $\mathfrak J^1_\R E$ fits in the short exact sequence of vector bundles over $M$:
\[
0 \longrightarrow TM \otimes_\R E \overset{\gamma}{\longrightarrow} \mathfrak J^1_\R E \longrightarrow E \longrightarrow 0,
\]
where the second arrow is the embedding $\gamma : TM \otimes_\R E \to \mathfrak J^1_\R E$, $d f \otimes e \mapsto \mathfrak j^1_\R (fe) - f \mathfrak j^1_{\mathbb R} e$, for all $f \in C^\infty (M)$, and $e \in \Gamma(E)$. Additionally, every section $\theta \in \Gamma(\mathfrak J^1_\R E)$ can be uniquely written in the form
\[
\theta = \mathfrak j^1_\R e + \gamma (\omega),
\]
where  $e \in \Gamma(E)$, and $\omega \in \Gamma (T^\ast M \otimes_\R E)$.

Now let $X = (M,j)$ be a complex manifold and let $E \to X$ be a holomorphic vector bundle over it. In particular, $E \to M$ is a complex vector bundle, and $\mathfrak J^1_\R E \to M$ inherits a (fiber-wise) complex structure from it: $i \cdot \mathfrak j^1_\R e := \mathfrak j^1_\R ie$, for all $e \in \Gamma (E)$.

\begin{remark}
The vector bundle $\mathfrak J^1_\R E \to M$ is canonically equipped with another complex vector bundle structure $j_{\mathfrak J^1 E} : \mathfrak J^1_\R E \to \mathfrak J^1_\R E$. To see this, first notice that there is a well-defined embedding $(T^{0,1}M)^\ast \otimes E \to T^\ast M \otimes _\R E$ given by the composition:
\[
(T^{0,1} M)^\ast \otimes E  \longrightarrow (T^\ast M \otimes_\R \C) \otimes E \overset{\cong}{\longrightarrow} T^\ast M \otimes_\R E.
\]
The embedding $(T^{0,1}M)^\ast \otimes E \to T^\ast M \otimes _\R E$ is right inverse to the projection $T^\ast M \otimes_\R E \to (T^{0,1}M)^\ast \otimes E$, $\omega \otimes_\R e \mapsto \frac{1}{2} (\omega + ij^\ast \omega)\otimes e$. In the following we will understand this embedding and interpret $(T^{0,1}M)^\ast \otimes E$ as a subbundle of $T^\ast M \otimes _\R E$. Then $j_{\mathfrak J^1 E}$ is defined on sections of $\mathfrak J^1_\R E$ by
\[
j_{\mathfrak J^1 E} (\mathfrak j^1_\R e + \gamma (\omega)) :=  \mathfrak j^1_\R (ie) + \gamma (j^\ast \omega - 2i \overline \partial e).
\]
\end{remark}

The connection $\overline \partial : \Gamma (E) \to \Gamma ((T^{0,1}M)^\ast \otimes E)$ determines a morphism $\Phi_{\overline \partial} : \mathfrak J^1_\R E \to (T^{0,1}M)^\ast \otimes E$, $\mathfrak j^1_\R e \mapsto \overline \partial e$, of real vector bundles.

\begin{definition}
The vector bundle $\mathfrak J^1 E := \ker \Phi_{\overline \partial}$ is the bundle of \emph{first holomorphic jets} of sections of $E \to X$.
\end{definition}

The above definition reflects the idea that \emph{holomorphic jets} are \emph{jets of holomorphic sections}. In this spirit, obviously, holomorphic jets can be introduced without any reference to real jets. If one does so, it immediately follows that the bundle of holomorphic jets is a holomorphic vector bundle (see also Proposition \ref{prop:J^1E_hol} below). What is not obvious is the description of the associated $T^{0,1} M$-connection, that we now provide.

First of all $\mathfrak J^1 E$ is a complex subbundle of $\mathfrak J^1_\R E$. Secondly, the inclusion $(T^{0,1}M)^\ast \otimes E \to \mathfrak J^1_\R E$ given by the composition
\[
(T^{0,1}M)^\ast \otimes E \longrightarrow T^\ast M \otimes_\R E \overset{\gamma}{\longrightarrow} \mathfrak J^1_\R E
\]
splits the exact sequence
\[
0 \longrightarrow \mathfrak J^1 E \longrightarrow \mathfrak J^1_\R E \overset{\Phi_{\overline \partial}}{\longrightarrow} T^{0,1}M \otimes E \longrightarrow 0.
\]
Hence there is a direct sum decomposition 
\begin{equation}\label{eq:split_jet}
\mathfrak J^1_\R E = \mathfrak J^1 E \oplus (T^{0,1}M)^\ast \otimes E.
\end{equation}
In particular, there are a projection $\mathfrak J^1_\R E \to \mathfrak J^1 E$, denoted $\theta \mapsto \theta^{1,0}$, and a short exact sequence
\[
0 \longrightarrow (T^{1,0} M)^\ast \otimes E \overset{\gamma^{1,0}}{\longrightarrow} \mathfrak J^1 E \longrightarrow E \longrightarrow 0,
\]
where $\gamma^{1,0}$ is given by the composition
\[
(T^{1,0} M)^\ast \otimes E \longrightarrow (T^\ast M \otimes_\R \C) \otimes E \overset{\cong}{\longrightarrow} T^\ast M \otimes_\R E \overset{\gamma}{\longrightarrow} \mathfrak J_\R^1 E \longrightarrow \mathfrak J^1 E.
\]
Denote by $\mathfrak j^{1,0} : \Gamma (E) \to \Gamma(\mathfrak J^1 E)$ the composition of the first jet prolongation $\mathfrak j^1_\R : \Gamma (E) \to \Gamma(\mathfrak J^1_\R E)$ followed by the projection $\Gamma (\mathfrak J^1_\R E) \to \Gamma (\mathfrak J^1 E)$, i.e.~$\mathfrak j^{1,0}e = (\mathfrak j^1_\R e)^{1,0} = \mathfrak j^1_\R e - \gamma (\overline \partial e)$, for all $e \in \Gamma (E)$. Let $e \in \Gamma (E)$, and $f \in C^\infty (M, \C)$. Then
\begin{equation}\label{eq:gamma^1,0}
\gamma^{1,0}(df \otimes e) = \mathfrak j^{1,0} (fe) - f \mathfrak j^{1,0} e.
\end{equation}
Moreover, it is clear that every section $\theta \in \Gamma(\mathfrak J^1 E)$ can be uniquely written in the form
\[
\theta = \mathfrak j^{1,0} e + \gamma^{1,0} (\omega),
\]
where $e \in \Gamma(E)$, and $\omega \in \Gamma ((T^{1,0} M)^\ast \otimes E)$.

We are now in the position to define a flat $T^{0,1}M$-connection $\overline \partial{}^{\mathfrak J^1 E}$ on $\mathfrak J^1 E$. Namely,  write
\begin{equation}\label{eq:flat_conn_J^1}
\overline \partial{}^{\mathfrak J^1 E}_\xi ( \mathfrak j^{1,0} e + \gamma^{1,0} (\omega)) :=  \mathfrak j^{1,0} \left( \overline\partial_\xi e \right) + \gamma^{1,0} \left( \overline \partial_\xi \omega - i_{\partial \xi} \overline \partial e \right),
\end{equation}
for all $e \in \Gamma (E)$, $\omega \in \Gamma ((T^{0,1}M)^\ast \otimes E)$, and $\xi \in \Gamma (T^{1,0}M)$. A direct computation exploiting (\ref{eq:gamma^1,0}) shows that $\overline \partial{}^{\mathfrak J^1 E}$ is a well-defined flat $T^{0,1}M$-connection. We have thus proved the following

\begin{proposition}\label{prop:J^1E_hol}
Formula (\ref{eq:flat_conn_J^1}) defines a canonical flat $T^{0,1}M$-connection $\overline \partial{}^{\mathfrak J^1 E}$ in $\mathfrak J^1 E$ (in particular, the bundle of first holomorphic jets is a holomorphic vector bundle over $X$).
\end{proposition}

\begin{remark}
Recall the direct sum decomposition $\mathfrak J^1_\R E = \mathfrak J^1 E \oplus (T^{0,1} M \otimes E)$. It is easy to see that the two complex structures on $\mathfrak J^1_\R E$ agree on the first summand and have opposite signs on the second summand. 
\end{remark}

\subsection{Multidifferential calculus on a holomorphic line bundle}

In this section we discuss the Schouten-Jacobi algebra of a holomorphic line bundle. It is analogue to the Schouten-Nijenhuis algebra of multivector fields on a manifold. The role of vector fields is now played by sections of the gauge algebroid. As the ``integrability condition'' of a Poisson bi-vector can be expressed in terms of the Schouten-Nijenhuis bracket, the ``integrability condition'' of a Jacobi structure can be expressed in terms of the Schouten-Jacobi bracket.  For details on the Schouten-Jacobi algebra of a real line bundle and its role in Jacobi geometry we refer to \cite[Appendix B]{LOTV2014}. Let $L \to M$ be a complex line bundle.

\begin{definition}
A  complex \emph {$k$-multiderivation} of $L$ is a $\C$-multilinear operator $\Gamma (L) \times \cdots \times \Gamma (L) \to \Gamma (L)$ which is skew-symmetric and  defines a first-order differential operator in each entry. Complex $k$-multiderivations of $L$ are sections of a complex vector bundle, denoted by $D^k_\C L \to M$. We write: $D^\bullet_\C L = \bigoplus_k D^k_\C L$. An element in $\Gamma (D^\bullet_\C L)$ is simply called a \emph{multiderivation}.
\end{definition}

\begin{remark}
Clearly $D^1_\C L = D_\C L \cong \operatorname{Hom} (\mathfrak J^1_\R L, L) 
$, where the homomorphisms are taken over $\C$ and $\mathfrak J^1_\R L$ is equipped with the fiber-wise complex structure given by $i \cdot \mathfrak j^1_\R \lambda := \mathfrak j^1_\R (i \lambda)$. More generally
\begin{equation}\label{eq:D^k_jets}
D^k_\C L \cong  \operatorname{Hom} (\wedge^k \mathfrak J^1_\R L, L),
\end{equation}
where the $k$-multiderivation $\Delta$ corresponds to the homomorphism $\Phi_\Delta : \wedge^k \mathfrak J^1_\R L \to L$ uniquely determined by 
\[
\Phi_\Delta (\mathfrak j^1_\R \lambda_1, \ldots, \mathfrak j^1_\R \lambda_k) := \Delta (\lambda_1, \ldots, \lambda_k).
\] 
\end{remark}

\begin{remark}
Since the rank of the complex vector bundle $L \to M$ plays no role in the definition of multiderivations, the latter is valid for any complex vector bundle.
\end{remark}

There is a degree zero graded Lie bracket $[ -,- ]^{SJ}$ on the graded vector space $\Gamma(D^\bullet_\C L)[1]$ ($\Gamma(D^\bullet_\C L)$ shifted by $1$) given by
\[
[ \Delta_1 , \Delta_2 ]^{SJ} := (-)^{k_1 k_2}\Delta_1 \circ \Delta_2 - \Delta_2 \circ \Delta_1,
\]
for all $\Delta_i \in \Gamma(D_\C^{k_i+1} L)$, $i = 1, 2$, where $\Delta_1 \circ \Delta_2$ is given by the following ``\emph{Gerstenhaber formula}'':
\[
\begin{aligned}
& (\Delta_1 \circ \Delta_2) (\lambda_1, \ldots, \lambda_{k_1+k_2 +1}) \\
& = \sum_{\tau \in S_{k_2+1, k_1}}(-)^{\tau} \Delta_1 (\Delta_2 (\lambda_{\tau(1)},\ldots, \lambda_{\tau(k_2+1)}) ,\lambda_{\tau (k_2+2)}, \ldots, \lambda_{\tau (k_1+k_2 +1)}),
\end{aligned}
\]
for all $\lambda_1,\ldots, \lambda_{k_1+k_2 +1} \in \Gamma (L)$. Here $S_{k,l}$ denotes \emph{unshuffles}. The bracket $[-,-]^{SJ}$ is called the \emph{Schouten-Jacobi bracket} (see \cite[Appendix B]{LOTV2014} for more details).

In what follows we denote by $\mathbb C_M := M \times \mathbb C \to M$ the trivial complex line bundle over $M$. When $L \to X$ is a holomorphic line bundle over a complex manifold $X = (M,j)$, then
the exact diagram 
\[
\begin{array}{c}
\xymatrix{  & 0 & 0 & 0 & \\
0 \ar[r] &  \C_M \ar[r] \ar[u] \ar@{=}[d]  &  D^{1,0} L \ar[r]^-{\sigma} \ar[u]  \ar@/^/[d]&  T^{1,0}M \ar[r]  \ar[u] &  0 \\
0 \ar[r]  & \C_M \ar[r]  & D_\C L \ar[r]^-{\sigma} \ar[u] &  (TM)^\C \ar[r] \ar[u] &  0 \\
            &  0 \ar[r]    \ar[u]    & T^{0,1} M \ar@{=}[r] \ar[u]^-{\overline \partial_{(-)}} & T^{0,1} M \ar[r] \ar[u] & 0 \\
            &  & 0 \ar[u] & 0 \ar[u] &
}
\end{array}
\]
is obtained from the exact diagram
\[
\begin{array}{c}
\xymatrix{ & 0 \ar[d] & 0 \ar[d] & 0 \ar[d] & \\
0  &  L \ar[l] \ar@{=}[d]  &  \mathfrak J^1 L \ar[l] \ar[d]  &  (T^{1,0}M)^\ast \otimes L \ar[l]_-{\gamma^{1,0}}  \ar[d] &  0 \ar[l] \\
0  & L \ar[l] \ar[d] & \mathfrak J_\R^1 L \ar[l] \ar[d]_-{\Phi_{\overline \partial}} \ar@/_/[u]&  TM \otimes_\R L \ar[l]_-{\gamma}  \ar[d] &  0 \ar[l] \\
    &    0       & (T^{0,1} M)^\ast \otimes L \ar[l] \ar[d] & (T^{0,1} M)^\ast \otimes L \ar@{=}[l] \ar[d] & 0 \ar[l] \\
     &  & 0 & 0 & }
\end{array}
\]
applying the functor $\operatorname{Hom} (-, L)$ (here we used $\operatorname{End}_{\C} L \cong \C_M$). The splitting $\mathfrak J^1_\R L = \mathfrak J^1 L \oplus (T^{0,1}M)^\ast \otimes L$ (see (\ref{eq:split_jet})), and the isomorphism (\ref{eq:D^k_jets}), now determine a factorization
\begin{equation}\label{eq:fact_D}
D_\C^\bullet L = D^{\bullet, 0} L \otimes T^{0,\bullet} M,
\end{equation}
where $D^{k, 0} L = \wedge^k (\mathfrak J^1 L)^\ast \otimes L$, and $D^{\bullet, 0} L := \bigoplus_k D^{k, 0} L$. The factorization (\ref{eq:fact_D}) extends the splitting $D_\C L = D^{1,0} L \oplus T^{0,1} M$ (see (\ref{eq:split_D})). Finally, denote by $\Delta \mapsto \Delta^{k,0}$ the projection $D_\C L \to D^{k, 0} L$. The vector bundle $D^{k,0} L$ is a holomorphic vector bundle over $X$ with a flat $T^{0,1}M$-connection also denoted by $\overline \partial{}^{DE}$ and given by
\[
\overline \partial{}^{DE}_\xi \Delta := \left([\overline \partial_\xi, \Delta]^{SJ} \right)^{k,0},
\]
for all $\xi \in \Gamma (T^{0,1}M)$, and $\Delta \in \Gamma (D^{k,0} L)$.

\section{Holomorphic Jacobi structures}\label{Sec:HolomJacobi}
\subsection{Holomorphic Jacobi manifolds}
Holomorphic Jacobi manifolds are the main objects in this paper. Before giving their full definition, we  recall that a (real) \emph{Jacobi manifold} is a triple $(M, L, \{-,-\})$, where $M$ is a manifold, $L \to M$ is a line bundle, and $\{-,-\} : \Gamma (L) \times \Gamma (L) \to \Gamma(L)$ is a (skew-symmetric) bi-derivation satisfying the Jacobi identity. The bracket $\{-,-\}$ is also called a \emph{Jacobi bracket} on $L \to M$, and the pair $(L, \{-,-\})$ is called a \emph{Jacobi bundle}.

\begin{example}\label{ex:real_cont}
Let $M$ be an odd dimensional manifold and let $\Cc \subset TM$ be a contact structure on it. Then the normal line bundle $L := TM/\Cc \to M$ is canonically equipped with a Jacobi bracket. See, e.g., \cite{CS2015} for details (see also \cite[Section 3]{V2015}).
\end{example}

\begin{definition}\label{def:main}
A \emph{holomorphic Jacobi manifold} is a complex manifold $X = (M,j)$ equipped with a \emph{holomorphic Jacobi structure}, i.e.~a pair $(L, J)$, where $L \to X$ is a holomorphic line bundle over $X$ and $J$ is a \emph{holomorphic Jacobi bi-derivation} of $L$, i.e.~a bi-derivation $J \in \Gamma (D^{2,0}L)$ such that 1) $\overline{\partial}{}^{DL} J = 0$, and 2) $[J, J]^{SJ} = 0$ (where $[-,-]^{SJ}$ is the \emph{Schouten-Jacobi bracket} of complex multiderivations of $L$). The pair $(L,J)$ is also called a \emph{holomorphic Jacobi bundle} over $X$.
\end{definition}

 There is a more algebraic definition of a Jacobi manifold (see Lemma \ref{Lem:JacMan} below). To see this, first recall that a \emph{Jacobi algebra} is a commutative algebra with unit equipped with a Lie bracket which is a first order diffferential operator in each entry. We here propose a slightly more general

\begin{definition} 
A \emph{Jacobi module} over a commutative algebra with unit $\mathcal A$ (or a \emph{Jacobi $\Aa$-module}), is an $\mathcal A$-module $\mathcal L$, equipped with
\begin{enumerate}
\item a Lie bracket $\mathcal L \times \mathcal L \to \mathcal L$, written $(\lambda_1, \lambda_2) \mapsto \{ \lambda_1, \lambda_2 \}$, and 
\item a Lie algebra homomorphism $\mathcal L \to \operatorname{Der} \mathcal A$, written $\lambda \mapsto R_\lambda$, such that
\end{enumerate}
\[
 \{ \lambda_1, a \lambda_2 \} = R_{\lambda_1} (a) \lambda_2 + a \{ \lambda_1, \lambda_2 \},
\]
for all $\lambda_1, \lambda_2 \in \mathcal L$ and $a \in \mathcal A$.
\end{definition}

\begin{lemma}\label{Lem:JacMan}
A holomorphic Jacobi manifold is the same as a holomorphic line bundle $L \to X$ equipped with the structure of a Jacobi $\Oo_X$-module on its sheaf $\Gamma_L$ of holomorphic sections, i.e.
\begin{enumerate}
\item for all open subsets $U \subset X$, a structure of Jacobi $\mathcal O_X(U)$-module on $\Gamma_L (U)$, such that
\item both the Lie bracket $\{-,-\} : \Gamma_L (U) \times \Gamma_L(U) \to \Gamma_L (U)$ and the Lie algebra homomorphisms $R : \Gamma_L (U) \to \operatorname{Der} \mathcal O_X(U)$ are compatible with restrictions.
\end{enumerate}
\end{lemma}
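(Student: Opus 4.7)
The plan is to unfold the definition of a holomorphic Jacobi bi-derivation $J$ into sheaf-theoretic data on $\Gamma_L$, via the universal property of the holomorphic first-jet bundle. First, factorization (\ref{eq:fact_D}) provides a holomorphic isomorphism $D^{2,0} L \simeq \operatorname{Hom}(\wedge^2 \mathfrak{J}^1 L, L)$, so a $\overline{\partial}{}^{DL}$-closed section of $D^{2,0} L$ over an open $U \subset X$ is equivalently an $\mathcal{O}_X(U)$-bilinear skew map $\Gamma_{\mathfrak{J}^1 L}(U) \times \Gamma_{\mathfrak{J}^1 L}(U) \to \Gamma_L(U)$. The short exact sequence $0 \to (T^{1,0} X)^\ast \otimes L \to \mathfrak{J}^1 L \to L \to 0$ and Leibniz rule (\ref{eq:gamma^1,0}) together imply that precomposition with $\mathfrak{j}^{1,0}$ yields a bijection between $\mathcal{O}_X(U)$-linear maps $\Gamma_{\mathfrak{J}^1 L}(U) \to \Gamma_L(U)$ and $\C$-linear first-order differential operators $\Gamma_L(U) \to \Gamma_L(U)$ with symbol in $\operatorname{Der}\mathcal{O}_X(U)$. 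Applying this in each slot identifies holomorphic sections of $D^{2,0} L$ over $U$ with pairs $(\{-,-\}, R)$ consisting of a $\C$-bilinear skew bracket $\{-,-\}$ on $\Gamma_L(U)$ and a map $R: \Gamma_L(U) \to \operatorname{Der}\mathcal{O}_X(U)$ satisfying $\{\lambda_1, a\lambda_2\} = R_{\lambda_1}(a)\lambda_2 + a\{\lambda_1, \lambda_2\}$; the bijection is natural in $U$, so restrictions are automatically intertwined.

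Next I would translate the integrability condition $[J, J]^{SJ} = 0$. Since $\overline{\partial}{}^{DL}$ is a graded derivation of $[-,-]^{SJ}$, the tri-derivation $[J, J]^{SJ}$ is again $\overline{\partial}{}^{DL}$-closed, hence a holomorphic section of $D^{3,0} L$; by the degree-three analogue of the identification above, it vanishes iff it vanishes on triples of holomorphic sections. The Gerstenhaber formula computes
\[
\tfrac{1}{2}[J,J]^{SJ}(\lambda_1,\lambda_2,\lambda_3) = \{\lambda_1, \{\lambda_2, \lambda_3\}\} + \{\lambda_2, \{\lambda_3, \lambda_1\}\} + \{\lambda_3, \{\lambda_1, \lambda_2\}\},
\]
so $[J,J]^{SJ} = 0$ is precisely the Jacobi identity for $\{-,-\}$ on $\Gamma_L$. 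A routine substitution $\lambda_3 \mapsto a\lambda_3$ into the Jacobi identity combined with the Leibniz rule then forces $R_{\{\lambda_1, \lambda_2\}} = [R_{\lambda_1}, R_{\lambda_2}]$, so $R$ is automatically a Lie algebra homomorphism.

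Combining the two steps proves the lemma in both directions. The main obstacle is the first paragraph: one must carefully match $\overline{\partial}{}^{DL}$-closed sections of $D^{k,0} L$ with $\mathcal{O}_X$-multilinear sheaf maps on the holomorphic jet bundle and, via $\mathfrak{j}^{1,0}$, with multi-derivations of $\Gamma_L$ having holomorphic symbols. Once this identification is set up, the translation of the Jacobi condition becomes a mechanical consequence of the Gerstenhaber formula together with a Leibniz-rule computation, and the converse direction follows by reversing the correspondence.
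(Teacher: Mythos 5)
Your argument is correct, but it takes a genuinely different route from the paper's. The paper handles the forward direction by simply restricting $J$ to holomorphic sections, and proves the converse by an explicit local extension: on an open ball $U$ with a holomorphic generator $\mu$ of $\Gamma(L|_U)$ it extends $R$ to arbitrary smooth sections $\lambda = g\mu$ via $R_{\lambda}(f) := f R_\mu (g)+ g R_\mu (f) - R_{f \mu} (g)$ (first viewing $R_\mu$ and $R_{f\mu}$ as smooth sections of $T^{1,0}U$), then defines $\{ f \mu , g \mu \} := R_{f \mu} (g) \mu - g R_\mu (f) \mu$ for smooth $f,g$, and checks that these local data glue to a global $J \in \Gamma(D^{2,0}L)$. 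You instead stay entirely in the holomorphic category: you identify holomorphic sections of $D^{2,0}L \simeq \operatorname{Hom}(\wedge^2 \mathfrak J^1 L, L)$ with $\Oo_X$-bilinear skew forms on holomorphic jets and then, via $\mathfrak j^{1,0}$ and the Leibniz rule (\ref{eq:gamma^1,0}), with bi-derivations of the sheaf $\Gamma_L$ together with their symbols $R$. This buys a symmetric treatment of the two directions and makes the translation of $[J,J]^{SJ}=0$ into the Jacobi identity transparent, at the price of two facts you should make explicit: (i) under the factorization (\ref{eq:fact_D}) the connection $\overline \partial{}^{DL}$ is the one induced by duality from $\overline \partial{}^{\mathfrak J^1 L}$, so that $\overline \partial{}^{DL}$-closedness of $J$ is the same as holomorphy of the bundle map $\wedge^2 \mathfrak J^1 L \to L$; and (ii) the fibers of $\mathfrak J^1 L$ are spanned by first jets of local holomorphic sections (e.g.\ $\mathfrak j^{1,0}\mu$ and $\mathfrak j^{1,0}(z^i\mu)$), which is what lets you test sections of $D^{k,0}L$ on holomorphic sections only. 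Concerning the integrability step, note that $[J,J]^{SJ}$ lies in $\Gamma(D^{3,0}L)$ simply because $D^{1,0}L$ is closed under the commutator; you do not actually need it to be $\overline \partial{}^{DL}$-closed, only that, by (ii), it is determined by its values on triples of holomorphic sections, where the Gerstenhaber formula exhibits it as the Jacobiator of $\{-,-\}$.
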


\begin{proof}
Suppose $X=(M, j)$ is a complex manifold and $(L, J)$ is a holomorphic Jacobi structure on it. Restricting $J : \Gamma (L) \times \Gamma (L) \to \Gamma (L)$ to holomorphic sections we get a Jacobi module structure on $\Gamma_L$. Conversely, suppose $L \to X$ is a holomorphic line bundle equipped with the structure of a Jacobi $\Oo_X$-module on its sheaf $\Gamma_L$ of holomorphic sections. First of all $R$ can be uniquely extended to a $\C$-linear, first order differential operator, also denoted $R$, from $\Gamma (L)$ to $\Gamma(T^{1,0}M)$, as follows. For $\lambda \in \Gamma (L)$ first define $R_\lambda$ (locally) on holomorphic functions. Thus, let $U \subset X$ be an open ball (i.e.~an open subset bi-holomorphic to an open ball in $\mathbb C^n$), let $\mu$ be a holomorphic generator of $\Gamma (L|_U)$, and let $f \in \mathcal O_X (U)$. A section $\lambda \in \Gamma (L|_U)$ can be always written as $\lambda = g \mu$ with $g \in C^\infty (M, \mathbb C)$. Write
\[
R_{\lambda} (f) := f R_\mu (g)+ g R_\mu (f) - R_{f \mu} (g).
\]
By construction, $R_\lambda$ is a well defined derivation of $\mathcal O_X (U)$ (with values in $C^\infty (U, \mathbb C)$), hence it extends uniquely to a (non-necessarily holomorphic) vector field, also denoted by $R_\lambda$, in $\Gamma(T^{1,0}U)$. Finally, define $\{-,-\} : \Gamma (L|_U) \times \Gamma (L|_U) \to \Gamma (L|_U)$ by
\[
\{ f \mu , g \mu \} = R_{f \mu} (g) \mu - g R_\mu (f) \mu,
\]
for all $f, g \in C^\infty (U, \mathbb C)$.
It is clear that the local data defined in this way, define a global $J$ such that $(X,L,J)$ is a holomorphic Jacobi manifold.
\end{proof}

\begin{example}[holomorphic contact manifolds]\label{ex:cc}
Recall that a \emph{holomorphic contact structure} on a complex manifold $X = (M,j)$ of (complex dimension) $2n+1$ is given by a holomorphic vector subbundle $\Cc \subset T^{1,0}M$ of rank $2n$ which is completely non-integrable in the sense that the \emph{Frobenius map}:
\[
\wedge^2 \Cc \longrightarrow T^{1,0} M / \Cc, \quad (\xi, \eta) \longmapsto [\xi, \eta] \operatorname{mod} \Cc ,
\]
is everywhere nondegenerate. Now, let $(X, \Cc)$ be a holomorphic contact manifold, i.e.~a complex manifold equipped with a holomorphic contact structure, and consider the holomorphic line bundle $L = T^{1,0} M/\Cc$, called the \emph{contact line bundle} in the sequel, along with the exact sequence:
\[
0 \longrightarrow \Cc \longrightarrow T^{1,0}M \overset{\theta_\Cc}{\longrightarrow} L \longrightarrow 0,
\]
where $\theta_\Cc$ is the canonical projection. In fact, $\theta_\Cc$ could be viewed as a holomorphic $1$-form on $X$ with values in $L$. When $L \cong \C_X$, then $\theta_\Cc$ can be viewed as a standard holomorphic $1$-form on $X$, called a (\emph{global}) \emph{contact} $1$-form. Contact $1$-forms do always exist locally. 

The contact line bundle $L \to X$ of a holomorphic contact manifold $(X, \Cc)$ is naturally equipped with a holomorphic Jacobi bracket that can be constructed in the same way as in the real case of Example \ref{ex:real_cont} (see, e.g., \cite{CS2015}, see also \cite[Section 3]{V2015}). Even more, holomorphic contact structures with fixed contact line bundle $L \to X$, are in one-to-one correspondence with \emph{non-degenerate} holomorphic Jacobi bi-derivations of $L$ \cite[Section 3]{V2015}.
\end{example}

\begin{example}
Let $A \to X$ be a holomorphic Lie algebroid over a complex manifold $X = (M,j)$, and let $L \to X$ be a holomorphic line bundle equipped with a flat holomorphic $A$-connection. Then the complex manifold $A^\ast \otimes L$ is canonically equipped with a holomorphic Jacobi bundle $\mathrm{pr}^\ast L \to A^\ast \otimes L$, where $\mathrm{pr} : A^\ast \otimes L \to X$ is the projection. The Jacobi bracket between holomorphic sections of $\mathrm{pr}^\ast L \to A^\ast \otimes L$ can be defined in the same way as in the real case \cite[Subsection 2.3]{LOTV2014}. Actually, given a holomorphic complex bundle $A \to X$ and a holomorphic line bundle $L \to X$, the following sets of data are equivalent:
\begin{itemize}
\item a holomorphic Lie algebroid structure on $A \to X$ and a flat holomorphic $A$-connection in $L$;
\item a \emph{fiber-wise linear} holomorphic Jacobi bundle structure on $\mathrm{pr}^\ast A \to A^\ast \otimes L$
\end{itemize}
(see \cite[Subsection 2.3]{LOTV2014} for the notion of \emph{fiber-wise linear Jacobi brackets}).
\end{example}

\subsection{Jacobi-Nijenhuis manifolds and generalized contact bundles}

In Section \ref{sec:HJ_JN_GC} we  will show that holomorphic Jacobi manifolds are equivalent to
\begin{enumerate}
\item (real) Jacobi-Nijenhuis manifolds,
\item generalized contact bundles,
\end{enumerate}
of a certain kind. First we will introduce here these notions  (1) and (2). Notice that Jacobi-Nijenhuis manifolds were studied  in \cite{MMP1999}. But here we take a slightly more general point of view, where the Jacobi structure lives on a non-necessarily trivial line bundle.

First of all, we fix our notation. Let $\{-,-\} : \Gamma (L) \times \Gamma(L) \to \Gamma (L)$
be a bi-derivation of a real line bundle $L \to M$. In view of the isomorphism (\ref{eq:D^k_jets}) we can regard $\{-,-\}$ as a vector bundle
morphism $\wedge^2 \mathfrak J^1_\R L \to L$ (\ref{eq:D^k_jets}). When doing this we
will use the symbol $J$. In sum, from now on, unless otherwise stated, $\{-,- \}$
will denote a skew-symmetric bracket on $\Gamma (L)$ which is a derivation in each
entry, while $J$ will denote the corresponding $L$-valued skew-symmetric bilinear form
on $\mathfrak J^1_\R L$, so that $\{-,-\}$ and $J$ contain the same information. To express this fact we also write $J \equiv \{-,-\}$ (or $\{-,-\} \equiv J$). The $2$-form
$J$ determines an obvious vector bundle morphism $J^\sharp: \mathfrak J^1_\R L \to D_\R L \cong \operatorname{Hom}_\R (\mathfrak J^1_\R L, L)$. Notice that, for any
$\Delta \in \Gamma (D_\R L)$ there is a unique derivation
$\Ll_\Delta : \Gamma (\mathfrak J^1_\R L) \to \Gamma (\mathfrak J^1_\R L)$,
such that 1) $\Delta$ and $\Ll_\Delta$ share the same symbol, and
2) $\Ll_\Delta \mathfrak j^1_\R \lambda = \mathfrak j^1_\R \Delta \lambda$
for all $\lambda \in \Gamma(L)$. The derivation $\Ll_\Delta$ is the \emph{Lie derivative along $\Delta$}.
We are now ready to see that a bi-derivation $\{-,-\} \equiv J$ of $L$ determines a skew-symmetric bracket $[-,-]_J$ on $\Gamma (\mathfrak J^1_\R L)$ given by
\begin{equation}\label{eq:[-,-]_J}
[\rho, \tau]_J := \Ll_{J^\sharp \rho} \tau - \Ll_{J^\sharp \tau} \rho - \mathfrak j^1_\R J (\rho, \tau),
\end{equation}
for all $\rho, \tau \in \Gamma (\mathfrak J^1_\R L)$. A direct computation shows that $J$ is a Jacobi bi-derivation if and only if $[-,-]_J$ is a Lie bracket. In this case $[-,-]_J$ is the Lie bracket on sections of the \emph{jet algebroid} $(\mathfrak J^1_\R L)_J$ of the Jacobi manifold $(M, L, J)$ \cite{LOTV2014}. Now, let $\phi : D_\R L \to D_\R L$ be an endomorphism, and let $\phi^\dag : \mathfrak J^1_\R L \to \mathfrak J^1_\R L$ be its adjoint, i.e.~$\langle \phi^\dag (\rho), \Delta \rangle := \langle \rho, \phi (\Delta) \rangle$, for all $\rho \in \mathfrak J^1_\R L$, and $\Delta \in D_\R L$, where $\langle -, -\rangle : \mathfrak J^1_\R L \times D_\R L \to L$ is the ``duality pairing''. If $J^\sharp \circ \phi^\dag = \phi \circ J^\sharp$, then $J_\phi := J (\phi -,-)$ is a well-defined $L$-valued skew-symmetric form on $\mathfrak J^1_\R L$ such that $J^\sharp_\phi = J^\sharp \circ \phi^\dag$. Denote by $\{-,-\}_\phi$ the bi-derivation corresponding to $J_\phi$: $J_\phi \equiv \{-,-\}_\phi$.

Now, let $\{-,-\}$ bi a bi-derivation of $L$, let $J : \wedge^2 \mathfrak J^1_\R L \to L$ be the associated $2$-form, and let $\phi : D_\R L \to D_\R L$ be an endomorphism. We say that $\phi$ is \emph{compatible} with $J$ if
\begin{equation}
J^\sharp \circ \phi^\dag = \phi \circ J^\sharp,
\end{equation}
(hence $J_\phi$ is well-defined) and 
\begin{equation}
\phi^\dag [\rho, \tau]_J = [\phi^\dag \rho, \tau]_J + [\rho, \phi^\dag \tau]_J - [\rho, \tau]_{J_\phi},
\end{equation}
for all $\rho, \tau \in \Gamma (J^1_\R L)$.

\begin{definition}
A \emph{Jacobi-Nijenhuis manifold} is a manifold $M$ equipped with a \emph{Jacobi-Nijenhuis structure}, i.e.~a triple $(L, \{-,-\}, \phi)$, where $L \to M$ is a line bundle, $\{-,-\}$ is a Jacobi bracket on $L$, and  $\phi : D_\R L \to D_\R L$ is a compatible endomorphism whose \emph{Nijenhuis torsion} $\mathcal N_\phi : \wedge^2 D_\R L \to D_\R L$ vanishes identically. Here
\[
\mathcal N_\phi (\Delta_1,\Delta_2) :=  [\phi (\Delta_1), \phi (\Delta_2)] + \phi^2 [\Delta_1,\Delta_2] - \phi [\phi (\Delta_1), \Delta_2] - \phi [\Delta_1, \phi (\Delta_2)],
\]
for all $\Delta_1,\Delta_2 \in \Gamma(D_\R L)$, .
\end{definition}

\begin{proposition}\label{prop:JbH}
Let $(L, \{-,-\} \equiv J, \phi)$ be a Jacobi-Nijenhuis structure. Then $(L, \{-,-\}, \{-,-\}_\phi)$ is a \emph{Jacobi bi-Hamiltonian} structure, i.e.~$\{-,-\}$, $\{-,-\}_\phi$ and $\{-,-\} + \{-,-\}_\phi$ are all Jacobi brackets.
\end{proposition}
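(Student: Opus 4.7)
The plan is to adapt the classical Poisson--Nijenhuis bi-Hamiltonian argument to the Jacobi setting, with the role of the cotangent algebroid $(T^\ast M)_\pi$ taken by the jet algebroid $(\mathfrak J^1_\R L)_J$ introduced above. Since $\{-,-\}$ is Jacobi by hypothesis, the task reduces to proving that $\{-,-\}_\phi \equiv J_\phi$ is Jacobi; once this is available, an application of the same statement to $\operatorname{id}+\phi$ will handle the sum $\{-,-\} + \{-,-\}_\phi$.

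First I would rewrite the second half of the compatibility condition as
\[
[\rho,\sigma]_{J_\phi} = [\phi^\dag \rho, \sigma]_J + [\rho, \phi^\dag \sigma]_J - \phi^\dag [\rho,\sigma]_J,
\]
which identifies $[-,-]_{J_\phi}$ with the deformation of the jet algebroid bracket $[-,-]_J$ by the endomorphism $\phi^\dag$. By the general theory of Nijenhuis operators on Lie algebroids, this deformation is again a Lie bracket provided the Nijenhuis torsion of $\phi^\dag$ with respect to $[-,-]_J$ vanishes; by the criterion recalled just before the definition of a Jacobi Nijenhuis structure, this in turn yields that $J_\phi$ is Jacobi. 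The first half of the compatibility, $J^\sharp \circ \phi^\dag = \phi \circ J^\sharp$, says that $J^\sharp$ intertwines $\phi^\dag$ and $\phi$. Since $J^\sharp$ is the anchor of $(\mathfrak J^1_\R L)_J$, hence a morphism of Lie algebroids to $D_\R L$, the Nijenhuis torsion of $\phi^\dag$ gets pushed to the Nijenhuis torsion of $\phi$, which vanishes by hypothesis. The main obstacle will be executing this push rigorously: the brackets on $\mathfrak J^1_\R L$ and $D_\R L$ are of different natures, so one cannot simply dualize. I would carry out the computation by testing on sections of the form $\mathfrak j^1_\R \lambda$, which generate $\Gamma(\mathfrak J^1_\R L)$ as a $C^\infty(M)$-module, and on which formula (\ref{eq:[-,-]_J}) reduces the jet bracket to a Lie derivative along the Hamiltonian derivation $J^\sharp \mathfrak j^1_\R \lambda$, allowing one to reroute everything through $J^\sharp$ and the torsion of $\phi$.

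For the bi-Hamiltonian conclusion, I would use the obvious identity $J + J_\phi = J_{\operatorname{id}+\phi}$ and verify that $\operatorname{id}+\phi$ satisfies the hypotheses of a Jacobi Nijenhuis structure together with $J$. Both halves of the compatibility for $\operatorname{id}+\phi$ follow immediately from those for $\phi$, since $\operatorname{id}$ is trivially compatible with $J$ and the resulting identities differ from the originals by terms that cancel after using the compatibility for $\phi$ itself. A short expansion gives $\mathcal N_{\operatorname{id}+\phi} = \mathcal N_\phi$, which vanishes by hypothesis. Applying the argument of the previous paragraph with $\operatorname{id}+\phi$ in place of $\phi$ shows that $J_{\operatorname{id}+\phi} \equiv \{-,-\}+\{-,-\}_\phi$ is Jacobi, completing the proof.
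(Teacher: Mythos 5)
The paper itself states Proposition \ref{prop:JbH} without proof (it is the Jacobi analogue of the classical Poisson--Nijenhuis bi-Hamiltonian theorem of Kosmann-Schwarzbach--Magri and of \cite{MMP1999}; given the architecture of the paper, the intended justification is presumably either that reference or Poissonization via Theorem \ref{theor:JN_hPN}, which transports the statement to the known Poisson--Nijenhuis one on $\widetilde M$). So your argument has to stand on its own, and most of it does: the identity $J + J_\phi = J_{\operatorname{id}+\phi}$, the verification that $\operatorname{id}+\phi$ is again compatible with $J$ and has the same torsion as $\phi$, and the rewriting of the compatibility condition as $[-,-]_{J_\phi} = \bigl([-,-]_J\bigr)_{\phi^\dag}$ are all correct, so the proposition genuinely reduces to showing that $J_\phi$ is Jacobi.

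The one step that fails as literally stated is the deduction that the Nijenhuis torsion of $\phi^\dag$ with respect to $[-,-]_J$ vanishes because it ``gets pushed to'' $\mathcal N_\phi$ along $J^\sharp$. Since $J^\sharp$ intertwines the brackets and the endomorphisms, that push only yields $J^\sharp\bigl(\mathcal N_{\phi^\dag}(\rho,\sigma)\bigr) = \mathcal N_\phi(J^\sharp\rho, J^\sharp\sigma) = 0$, i.e.\ that the torsion of $\phi^\dag$ is valued in $\ker J^\sharp$, which is large whenever $J$ is degenerate. (The torsion of $\phi^\dag$ does in fact vanish, but that is the Jacobi analogue of a separate lemma of Kosmann-Schwarzbach and Magri and requires its own computation.) Fortunately your fallback --- test on holonomic jets and reroute through $J^\sharp$ --- does close the gap, and in fact lets you bypass $\mathcal N_{\phi^\dag}$ altogether. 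Writing $\Delta_\lambda := J^\sharp \mathfrak j^1_\R \lambda = \{\lambda,-\}$, the Hamiltonian derivation of $\{-,-\}_\phi$ is $\phi\Delta_\lambda$ (because $J_\phi^\sharp = \phi\circ J^\sharp$), so the Jacobi identity for $\{-,-\}_\phi$ is equivalent to $[\phi\Delta_{\lambda_1},\phi\Delta_{\lambda_2}] = \phi\Delta_{\{\lambda_1,\lambda_2\}_\phi}$. Evaluating the compatibility condition on $(\mathfrak j^1_\R\lambda_1, \mathfrak j^1_\R\lambda_2)$, applying $J^\sharp$, and using that $J^\sharp$ is a Lie algebroid morphism together with $[\mathfrak j^1_\R\lambda_1,\mathfrak j^1_\R\lambda_2]_{J'} = \mathfrak j^1_\R\{\lambda_1,\lambda_2\}'$ for any bi-derivation $J'$, gives
\[
[\phi\Delta_{\lambda_1},\Delta_{\lambda_2}] + [\Delta_{\lambda_1},\phi\Delta_{\lambda_2}] - \phi\Delta_{\{\lambda_1,\lambda_2\}} = \Delta_{\{\lambda_1,\lambda_2\}_\phi},
\]
and substituting this into $\mathcal N_\phi(\Delta_{\lambda_1},\Delta_{\lambda_2}) = 0$ (using $[\Delta_{\lambda_1},\Delta_{\lambda_2}] = \Delta_{\{\lambda_1,\lambda_2\}}$, valid since $J$ is Jacobi) produces exactly the required identity. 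With that substitution made explicit your proof is complete; without it, the central claim rests on an inference through a non-injective map.
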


We now recall the definition of a generalized contact bundle from \cite{VW2016}. Let $L \to M$ be a line bundle. The \emph{omni-Lie algebroid} \cite{CL2010, CLS2011} $\D L := D_\R L \oplus \mathfrak J^1_\R L$ is canonically equipped with the following structures:
\begin{itemize}
\item the projection $\operatorname{pr}_D : \D L \to D_\R L$, 
\item the symmetric bilinear form $\langle \hspace{-2.7pt} \langle -,- \rangle \hspace{-2.7pt} \rangle : \D L \otimes \D L \to \mathbb L$:
\[
\langle \hspace{-2.7pt} \langle (\Delta, \rho), (\nabla, \tau) \rangle \hspace{-2.7pt} \rangle := \langle \tau , \Delta \rangle + \langle \rho, \nabla \rangle,
\]
\item the \emph{Dorfman-Jacobi bracket} $[\![ -,-]\!] : \Gamma (\D L) \times \Gamma (\D L) \to \Gamma (\D L)$:
\[
[\![  (\Delta, \rho), (\nabla, \tau)]\!] := ([\Delta,\nabla], \Ll_{\Delta} \tau - \Ll_{\nabla} \rho + \mathfrak j_\R^1 \langle \rho, \nabla \rangle),
\]
\end{itemize}
where $\Delta, \nabla \in \Gamma (D_\R L)$, and $\rho, \tau \in \Omega^1 (M)$. With the above three structures $\D L$ is an \emph{$L$-Courant algebroid} \cite{CLS2010} and a \emph{contact Courant algebroid} \cite{G2013}.

\begin{definition}[\cite{VW2016}]
A \emph{generalized contact bundle} is a line bundle $L \to M$ equipped with a \emph{generalized contact structure}, i.e.~a vector bundle endomorphism $\Ii : \D L \to \D L$ such that
\begin{itemize}
\item $\Ii$ is \emph{almost complex}, i.e.~$\Ii^2 = -1$,
\item $\Ii$ is \emph{skew-symmetric}, i.e.
\[
\langle \hspace{-2.7pt} \langle \Ii \alpha, \beta \rangle \hspace{-2.7pt} \rangle + \langle \hspace{-2.7pt} \langle \alpha, \Ii \beta \rangle \hspace{-2.7pt} \rangle = 0, \quad \text{for all }\alpha, \beta \in \Gamma (\D L),
\]
\item $\Ii$ is \emph{integrable}, i.e.
\[
[\![ \Ii \alpha, \Ii \beta]\!] - [\![ \alpha, \beta]\!] - \Ii [\![ \Ii \alpha, \beta]\!] + \Ii [\![ \alpha, \Ii \beta ]\!] = 0, \quad \text{for all }\alpha, \beta \in \Gamma (\D L).
\]
\end{itemize}
\end{definition} 
Let $(L \to M, \Ii)$ be a generalized contact bundle. Using the direct sum decomposition $\D L = D_\R L \oplus \mathfrak J^1_\R L$, and the definition, one can see that
\[
\Ii = \left(
\begin{array}{cc}
\phi & J^\sharp \\
\omega_\flat & - \phi^\dag
\end{array}
\right)
\]
where $J$ is a Jacobi bi-derivation, $\phi : D_\R L \to D_\R L$ is an endomorphism compatible with $J$, and $\omega : \wedge^2 D_\R L \to L$ is a $2$-form, with associated vector bundle morphism $\omega_\flat : D_\R L \to \mathfrak J^1_\R L$, satisfying additional compatibility conditions \cite{VW2016}. In particular, when $\omega = 0$, then $\phi$ is a complex structure in the vector bundle $D_\R L \to M$, and $(L, J, \phi)$ is a Jacobi-Nijenhuis structure. Generalized contact bundles are supported by odd dimensional manifolds and their geometry is an odd dimensional analogue of generalized complex geometry. 

\subsection{Jacobi-Nijenhuis and homogeneous Poisson-Nijenhuis, generalized contact and homogeneous generalized complex}

Jacobi manifolds are in one-to-one correspondence with certain homogeneous 
Poisson manifolds.  The correspondence  is provided by the 
\emph{Poissonization construction} in one direction, and by Proposition 
\ref{prop:J_hP} below in the other direction. Similarly, there are correspondences 
between Jacobi-Nijenhuis manifolds (resp.~generalized contact bundles) and 
homogeneous Poisson-Nijenhuis manifolds (resp.~homogeneous generalized 
complex manifolds), see Theorem \ref{theor:JN_hPN} (resp.~Theorem 
\ref{theor:gc_hgc}). Indeed, let $L \to M$ be a (real) line bundle. The manifold 
$\widetilde M := L^\ast \smallsetminus \{0\}$ is a principal bundle over $M$ with
 structure group the multiplicative group $\R^\times := \R \smallsetminus \{0\}$,
  and we denote by $h : \R^\times \times \widetilde M \to \widetilde M$, $(r, \epsilon) \mapsto h_r (\epsilon)$ the principal action. 
  Denote by $p : \widetilde M \to M$ the projection. Let $\eta$ be 
  the restriction to $\widetilde M$ of the \emph{Euler vector field} 
  on $L^\ast$. Then $\eta$ is the fundamental vector field corresponding
   to the canonical generator $1$ in the Lie algebra $\R$ of the structure 
   group $\R^\times$. Sections of $L$ are in one-to-one correspondence 
   with \emph{homogeneous functions} on $\widetilde M$, 
  i.e.~functions $f \in C^\infty (\widetilde M)$ such that $h_r^\ast (f) = r f$ for every $r \in \R^\times$. In their turn homogeneous functions can be 
   characterized as those functions $f$ such that $\Ll_\eta f = f$, and, 
   additionally, $h_{-1} (f) = -f$. Notice that, as the Lie group $\R^\times$ 
   is not connected, the last condition \emph{cannot} be removed.
For any section $\lambda \in \Gamma (L)$, we denote by $\widetilde \lambda$ the corresponding homogeneous function on $\widetilde M$. Now, let $\{-,-\} : \Gamma (L) \times \Gamma(L) \to \Gamma (L)$ be a bi-derivation, i.e.~a skew-symmetric bracket which is a first order differential operator in each argument. It is easy to see that there is a unique bi-vector $\pi$ on $\widetilde M$ such that
\[
\{ \widetilde \lambda_1, \widetilde \lambda_2 \}_{\widetilde{M}}  = \widetilde{\{ \lambda_1, \lambda_2 \} },
\]  
for all $\lambda_1, \lambda_2 \in \Gamma(L)$, where $\{-,-\}_{\widetilde{M}} : C^\infty (\widetilde M) \times C^\infty (\widetilde M) \to C^\infty (\widetilde M)$ is the skew-symmetric bracket determined by $\pi$, i.e.~$\{f_1,f_2\}_{\widetilde M} = \pi (df_1, df_2)$. Additionally, $\Ll_\eta \pi = -\pi$. Finally, $\pi$ is a \emph{Poisson bi-vector}, hence $(\pi, \eta)$ is a homogeneous Poisson structure on $\widetilde M$, if and only if $\{-,-\}$ is a Jacobi bracket, i.e.~it satisfies the Jacobi identity. In this case, the homogeneous Poisson manifold $(\widetilde M, \pi, \eta)$ is sometimes called the \emph{Poissonization} of the Jacobi manifold $(M,L, \{-,-\})$. Notice that $\pi$ comes from a symplectic structure if and only if $J$ comes from a contact structure. 

\begin{proposition}\label{prop:J_hP} 
Every homogeneous Poisson manifold is the Poissonization of a canonical Jacobi manifold around every non-singular point of the homogeneity vector field.
\end{proposition}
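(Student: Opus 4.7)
The plan is to invert the Poissonization construction locally around a non-singular point of the homogeneity vector field. Let $(\widetilde M, \pi, \eta)$ be a homogeneous Poisson manifold and let $p \in \widetilde M$ be a point where $\eta_p \neq 0$.

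First, I would apply the flow-box (straightening) theorem to produce an open neighborhood $\widetilde U$ of $p$ and coordinates $(x^1,\ldots,x^n,u)$ on $\widetilde U$ in which $\eta = \partial/\partial u$, with $\widetilde U \cong V \times I$ for an open $V \subset \mathbb R^n$ and an open interval $I \subset \mathbb R$. Shrinking if necessary, I may assume $I \subset \mathbb R$. To realize this as a piece of a line bundle, I pass to the change of fibre coordinate $t := e^u$: this identifies $\widetilde U$ with an open subset of $L^\ast \smallsetminus 0$, where $L := V \times \mathbb R \to V$ is the trivial line bundle, and sends $\eta$ to the restriction of the Euler vector field $t\,\partial/\partial t$. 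Under this identification, sections $\lambda$ of $L \to V$ correspond bijectively to degree-one homogeneous functions $\widetilde \lambda$ on $\widetilde U$ (i.e.\ $\mathcal L_\eta \widetilde \lambda = \widetilde \lambda$), via $\widetilde{\lambda}(x,t) = t\,\lambda(x)$.

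Next, I use the homogeneity $\mathcal L_\eta \pi = -\pi$ to define the Jacobi bracket. For any two functions $f_1,f_2$ that are homogeneous of degree one, the Poisson bracket $\pi(df_1,df_2)$ is again degree-one homogeneous:
\[
\mathcal L_\eta \pi(df_1,df_2) = (\mathcal L_\eta \pi)(df_1,df_2) + \pi(d\mathcal L_\eta f_1,df_2) + \pi(df_1,d\mathcal L_\eta f_2) = -\pi(df_1,df_2) + 2\pi(df_1,df_2).
\]
Hence the formula $\widetilde{\{\lambda_1,\lambda_2\}} := \{\widetilde\lambda_1,\widetilde\lambda_2\}_\pi$ defines a bracket $\{-,-\}$ on $\Gamma(L)$. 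I then check that $\{-,-\}$ is a bi-derivation: pullbacks $p^\ast g$ of functions $g \in C^\infty(V)$ are homogeneous of degree zero, so $\widetilde{g\lambda} = p^\ast g \cdot \widetilde \lambda$, and expanding $\pi(d\widetilde\lambda_1, d(p^\ast g \cdot \widetilde \lambda_2))$ using the Leibniz rule for $\pi$ gives precisely the Leibniz rule for a bi-derivation of $L$, with symbol $R_\lambda$ induced by the Hamiltonian vector field of $\widetilde\lambda$ (which is projectable by homogeneity). Skew-symmetry and the Jacobi identity for $\{-,-\}$ descend from the corresponding properties of $\{-,-\}_\pi$, so $(V,L,\{-,-\})$ is a Jacobi manifold.

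Finally, by the very construction, the Poisson bracket on $\widetilde U$ between degree-one homogeneous functions agrees with the bracket of the corresponding sections of $L$, and this uniquely determines a Poisson bivector on $L^\ast \smallsetminus 0$ homogeneous of degree $-1$. Therefore $(\widetilde U,\pi|_{\widetilde U},\eta|_{\widetilde U})$ coincides with the Poissonization of $(V,L,\{-,-\})$. The main potential obstacle is verifying cleanly that the Jacobi bracket is a genuine bi-derivation of the line bundle $L$ (rather than only a Lie bracket on sections), but this reduces to inspecting the symbol in local coordinates and using that $\eta$ preserves $C^\infty(V) \subset C^\infty(\widetilde U)$ as degree-zero functions; everything else is a straightforward transcription between the homogeneous and the projected pictures.
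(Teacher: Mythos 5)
Your proposal is correct and follows essentially the same route as the paper: straighten the homogeneity vector field near the non-singular point so that it becomes (the restriction of) an Euler vector field $t\,\partial/\partial t$ over the local orbit space, identify degree-one homogeneous functions with sections of a line bundle over that orbit space, observe that homogeneity of $\pi$ makes the Poisson bracket of two such functions again degree-one homogeneous so that it descends to a Jacobi bracket, and finally recognize the original neighborhood as an open homogeneous Poisson piece of the resulting Poissonization. The only cosmetic difference is that you pass through flow-box coordinates and the substitution $t=e^u$, whereas the paper directly asserts a diffeomorphism $F:U\to(a,b)\times M$ intertwining $\eta$ with $t\,\tfrac{d}{dt}$.
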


\begin{proof}
Let $(N, \pi_N, \eta_N)$ be an $(n+1)$-dimensional homogeneous Poisson manifold, and let $x \in N$ be a point such that $(\eta_N)_x \neq 0$. Then $\eta_N$ is everywhere non-zero in a whole neighborhood $U$ of $x$. Even more, $U$ can be chosen so that there is a diffeomorphism $F : U \to (a,b) \times M$, where $(a,b)$ is an interval with $0 < a < b$, $M$ in an $n$-dimensional manifold, and $F$ intertwines $\eta_N$ and $t \frac{d}{dt}$, $t$ being the canonical coordinate on $(a,b)$. In the following, we use $F$ to identify $U$ with $(a,b) \times M$, and $\eta_N$ with $t \frac{d}{dt}$. Then, the space of orbits of $\eta_N$ in $U$ is simply $M$, and the projection $p : U = (a,b) \times M \to M$ is the projection onto the second factor. Smooth functions $f$ on $U$ that are homogeneous with respect to $\eta_N$, i.e.~such that $\Ll_{\eta_N} f = f$, are linear functions in the coordinate $t$, and they form a $C^\infty (M)$-module $\Ll$. The value of any such function on a fiber $\mathcal F$ of $p$ is completely determined by its value on a point of $\mathcal F$. Hence $\Ll$ is the module of sections of a line bundle $L \to M$. Since $\pi_N$ is homogeneous with respect to $\eta_N$, then the corresponding Poisson bracket restricts to a Jacobi bracket $\{-,-\}$ on $L$. Our final aim is to show that $U$ can be embedded in $\widetilde M = L^\ast \smallsetminus \{0\}$ as an an \emph{open homogeneous Poisson submanifold}. Clearly, there is a unique smooth Poisson map $i : U \to \widetilde M$ intertwining $\eta_N$ and $\eta$, such that 
\[
\widetilde \lambda (i(x)) = \lambda (x), 
\]
for all $x \in U$ and $\lambda \in \Gamma (L) = \Ll \subset C^\infty (U)$. Since $\Ll$ generates $C^\infty (U)$ as a $C^\infty$-algebra, then $i$ is the inclusion of an open (homogeneous Poisson) submanifold. 
\end{proof}

\begin{remark}
 Proposition \ref{prop:J_hP} is not completely new and should be compared with a similar one, namely, Proposition 2.3 in \cite{DLM1991}. There are two main differences, however, between our approach and that of \cite{DLM1991}. First of all, the authors of \emph{loc. cit.} work with trivial line bundles, Jacobi pairs and conformal Jacobi maps, which makes things slightly less canonical. This leads to the second difference: in order to construct a Jacobi manifold from a homogeneous Poisson manifold, the authors of \cite{DLM1991} have to choose a hypersurface transverse to the homogeneity vector field (at a non-singular point), and then show that the construction is independent of the choice of the hypersurface up to conformal Jacobi isomorphisms. In our case, working with non-necessarily trivial line bundles and Jacobi structures on them, we can simply take a quotient (see the proof of Proposition \ref{prop:J_hP}) which doesn't require any choice. 
 \end{remark}
 

\begin{theorem}\label{theor:JN_hPN}
The Poissonization of a Jacobi-Nijenhuis manifold is a homogeneous Poisson-Nijenhuis manifold in a natural way. Conversely, every homogeneous Poisson-Nijenhuis manifold is the Poissonization of a canonical Jacobi-Nijenhuis manifold around every non-singular point of the homogeneity vector field.
\end{theorem}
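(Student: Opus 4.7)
The plan is to extend the Poissonization correspondence of Proposition \ref{prop:J_hP} so that the Nijenhuis endomorphism $\phi$ on the Jacobi side matches a $(1,1)$-tensor $\widetilde \phi$ on the Poisson side. The main technical tool is the identification between the Lie algebra of linear vector fields on the total space of $L^\ast \to M$ and the Lie algebra of derivations of $L$, together with its $(1,1)$-tensor version given by Lemma \ref{prop:linear_end}.

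For the forward direction, let $(M, L, \{-,-\} \equiv J, \phi)$ be a Jacobi Nijenhuis manifold and $(\widetilde M, \pi, \eta)$ its Poissonization. I would first promote $\phi : D_\R L \to D_\R L$ to a linear $(1,1)$-tensor $\widetilde \phi$ on $L^\ast$ via Lemma \ref{prop:linear_end}, after verifying that $\phi$ satisfies the two hypotheses there: preservation of the symbol kernel $\operatorname{End}_\R L$ and scalar action on it. These should be automatic for an endomorphism compatible with the Jacobi bi-derivation $J$; if not, they are implicit in the intended definition and need to be spelled out. Restricting $\widetilde \phi$ to $\widetilde M = L^\ast \smallsetminus 0$ gives the candidate tensor, and linearity of $\widetilde \phi$ is equivalent to $\Ll_\eta \widetilde \phi = 0$. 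I would then show that $(\pi, \widetilde \phi)$ is a Poisson Nijenhuis structure by translating the relevant identities through the Poissonization: sections of $T^\ast \widetilde M$ that are homogeneous of the appropriate degree under $\eta$ correspond to sections of $\mathfrak J^1_\R L$ in such a way that $\pi^\sharp$ becomes $J^\sharp$, $\widetilde \phi^\ast$ becomes $\phi^\dag$, and the cotangent bracket $[-,-]_\pi$ becomes the jet bracket $[-,-]_J$. Under this dictionary, compatibility of $\phi$ with $J$ and vanishing of $\mathcal N_\phi$ translate directly into the corresponding statements for $\widetilde \phi$ and $\pi$, because the Nijenhuis torsion is tensorial and linear vector fields generate $\mathfrak X(\widetilde M)$ as a $C^\infty(\widetilde M)$-module.

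For the converse, starting with $(N, \pi_N, \widetilde \phi, \eta_N)$ and a point where $\eta_N \neq 0$, Proposition \ref{prop:J_hP} already identifies an open neighborhood with the Poissonization $(\widetilde M, \pi, \eta)$ of a Jacobi bundle $(M, L, \{-,-\})$. The hypothesis $\Ll_{\eta_N} \widetilde \phi = 0$ says $\widetilde \phi$ is invariant under the $\R^\times$-action generated by $\eta$, hence $\widetilde \phi$ is a linear $(1,1)$-tensor, and by Lemma \ref{prop:linear_end} it descends to an endomorphism $\phi$ of $D_\R L$. Running the dictionary above in reverse turns the Poisson Nijenhuis axioms for $(\pi, \widetilde \phi)$ into the Jacobi Nijenhuis axioms for $(L, \{-,-\}, \phi)$, and by construction the Poissonization of this Jacobi Nijenhuis structure recovers $(N, \pi_N, \widetilde \phi, \eta_N)$ on the chosen neighborhood.

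The main obstacle is bookkeeping: once the correct dictionary between the two sides is in place, every algebraic identity becomes a tautology, but setting it up cleanly requires care with the duality $L \leftrightarrow L^\ast$ used in forming $\widetilde M$ and with signs in the identification of derivations of $L$ with linear vector fields on $L^\ast$. A secondary point is confirming that properties (1) and (2) of Lemma \ref{prop:linear_end} really are automatic for a Nijenhuis endomorphism compatible with a Jacobi bi-derivation; otherwise they should be read into the definition of Jacobi Nijenhuis manifold.
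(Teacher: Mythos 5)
Your construction of $\widetilde\phi$ goes through the wrong lemma, and this is a genuine gap rather than a bookkeeping issue. Lemma \ref{prop:linear_end} only applies to endomorphisms of the gauge algebroid that preserve the kernel of the symbol and act on $\operatorname{End}_\R L \simeq \R_M$ by composition with a fixed endomorphism. These conditions are \emph{not} automatic for an endomorphism $\phi : D_\R L \to D_\R L$ compatible with a Jacobi bi-derivation, nor are they part of the definition of a Jacobi Nijenhuis structure --- and they must not be added to it, because they fail in the paper's main application: for the endomorphism $\widehat j$ coming from a holomorphic Jacobi structure one has $\widehat j\,\mathbb 1 = \Delta$ with $\sigma(\Delta) \neq 0$ (it corresponds to the vector field $\widetilde j\eta$), so $\widehat j$ does not preserve $\ker\sigma = \operatorname{End}_\R \ell$. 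Had you restricted the theorem to endomorphisms satisfying the hypotheses of Lemma \ref{prop:linear_end}, Theorem \ref{theor:hJ_JN_gc} would no longer follow from it. Your statement that ``linearity of $\widetilde\phi$ is equivalent to $\Ll_\eta\widetilde\phi = 0$'' conflates two distinct classes: linear $(1,1)$ tensors on the total space of $L^\ast$ (double vector bundle morphisms, i.e.\ the restricted class of Lemma \ref{prop:linear_end}) versus $\eta$-invariant $(1,1)$ tensors on the slit dual $\widetilde M$, which form a strictly larger class.

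The correct tool, and the one the paper uses, is the canonical isomorphism $p^\ast D_\R L \simeq T\widetilde M$ over the slit dual (\cite[Proposition~A.1]{V2015}), under which the embedding $\Gamma(D_\R L) \to \mathfrak X(\widetilde M)$, $\Delta \mapsto \widetilde\Delta$, determined by $\widetilde\Delta(\widetilde\lambda) = \widetilde{\Delta(\lambda)}$, converts \emph{any} endomorphism of $D_\R L$ into an $\eta$-invariant $(1,1)$ tensor $\widetilde\phi$ on $\widetilde M$ with no extra hypotheses (this is Remark \ref{rem:one-to-one}). Once $\widetilde\phi$ is built this way, the rest of your argument is sound and coincides with the paper's: the dictionary between homogeneous objects on $\widetilde M$ and jet/derivation objects on $L$ turns the Poisson Nijenhuis identities into the Jacobi Nijenhuis ones, and your converse direction (invariance of $\phi_N$ under the flow of $\eta_N$ implies it descends to an endomorphism of $D_\R L$) is exactly the paper's proof.
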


\begin{proof}
Let $L \to M$ be a line bundle and let $\{-,-\}$ be a bi-derivation of $L$. Consider the slit dual $\widetilde M = L^\ast \smallsetminus \{0\}$, the Euler vector field $\eta \in \mathfrak X (\widetilde M)$ and the bi-vector $\pi$ determined by $\{-,-\}$ on $\widetilde M$ as in the beginning of this section (if, in particular, $(M, L, \{-,-\})$ is a Jacobi manifold, then $(M, \pi, \eta)$ is its Poissonization). Finally let $\phi : D_\R L \to D_\R L$ be an endomorphism. From \cite[Proposition A.1]{V2015} there is an embedding
$\iota : \Gamma (D_\R L) \to \mathfrak X (\widetilde M)$, $\Delta \mapsto \widetilde \Delta$
of $C^\infty (M)$-modules, and Lie algebras, uniquely determined by
$\widetilde \Delta (\widetilde \lambda) = \widetilde{\Delta (\lambda)}$,
for all $\lambda \in \Gamma(L)$, and there is a canonical isomorphism
$p^\ast D_\R L \cong T \widetilde M$ of vector bundles over $\widetilde M$
such that $\iota$ agrees with the pull-back along $p$ of sections of $DL$.
In particular, there is a unique (1,1) tensor $\widetilde \phi$ on $T \widetilde M$
such that $\widetilde \phi (\widetilde \Delta) = \widetilde{\phi (\Delta)}$ for all
$\Delta \in \Gamma (D_\R L)$. It is not hard to see that $(\widetilde M, \pi, \widetilde \phi, \eta)$
is a homogeneous Poisson-Nijenhuis manifold if and only if $(M, L, \{-,-\}, \phi)$ is a Jacobi-Nijenhuis manifold. We leave the details to the reader.

Conversely, let $(N, \pi_N, \phi_N, \eta_N)$ be a homogeneous Poisson-Nijenhuis manifold and let $p : U \to M$, $(M, L, \{-,-\})$, 
and $i = U \to \widetilde M$ be as in the proof of Proposition \ref{prop:J_hP}. From $\Ll_{\eta_N} \phi_N = 0$, it follows that $\phi_N$ restricts to an endomorphism $\phi : D_\R L \to D_\R L$. 
Moreover, $(M, L, J, \phi)$ is a Jacobi-Nijenhuis manifold. This concludes the proof.
\end{proof}

Now, let $(L \to M, \Ii)$ be a generalized contact bundle,
$\Ii =
\left( \begin{smallmatrix}
\phi & J^\sharp \\
\omega_\flat & -\phi^\dag
\end{smallmatrix} \right)$.
Consider $\widetilde M = L^\ast \smallsetminus \{0\}$ and the Euler vector field $\eta$ on it. Since $L$ is equipped with a Jacobi structure $J$, then $\widetilde M$ is canonically equipped with a homogeneous Poisson structure $\pi$. We call $(\widetilde M, \pi, \eta)$ the \emph{Poissonization} of $(L \to M, \Ii)$. 

\begin{theorem}\label{theor:gc_hgc}
The Poissonization of a generalized contact bundle is a homogeneous generalized complex manifold in a natural way. Conversely, every homogeneous generalized complex manifold is the Poissonization of a canonical generalized contact bundle around every non-singular point of the homogeneity vector field.
\end{theorem}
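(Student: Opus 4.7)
The plan is to follow closely the pattern of the proof of Theorem \ref{theor:JN_hPN}, extending the Poissonization dictionary from multiderivations/jets to the full omni-Lie algebroid. The fundamental tool is the isomorphism $\iota : \Gamma(D_\R L) \to \mathfrak X(\widetilde M)$, $\Delta \mapsto \widetilde \Delta$ from \cite[Proposition A.1]{V2015} and the isomorphism $p^\ast D_\R L \simeq T \widetilde M$ underlying it; dually one obtains an identification of $\Gamma (\mathfrak J^1_\R L)$ with $L$-valued $1$-forms on $\widetilde M$ that are homogeneous of weight $1$, or, equivalently (using $1 \leftrightarrow \eta$), with ordinary $1$-forms on $\widetilde M$ of weight $1$ with respect to $\eta$. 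Assembling these two identifications yields a canonical vector bundle isomorphism $p^\ast \mathbb D L \simeq \mathbb T \widetilde M$ compatible with the projections onto the ``$D$-part'' and ``$T$-part'', that intertwines the pairing $\langle\!\langle -,-\rangle\!\rangle$ on $\mathbb D L$ (up to the $\eta$-weighted identification $L \leftrightarrow \R_{\widetilde M}$) with the one on $\mathbb T \widetilde M$, and the Dorfman--Jacobi bracket with the Dorfman bracket.

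Given this dictionary, the forward direction proceeds as follows. Let $(L \to M, \Ii)$ be a generalized contact bundle, with $\Ii = \left( \begin{smallmatrix} \phi & J^\sharp \\ \omega_\flat & -\phi^\dag \end{smallmatrix}\right)$, and let $(\widetilde M, \pi, \eta)$ be its Poissonization. By Theorem \ref{theor:JN_hPN}, the bi-derivation $J$ yields $\pi$ with $\mathcal L_\eta \pi = -\pi$, and $\phi$ lifts to a $(1,1)$-tensor $\widetilde \phi$ on $T \widetilde M$ with $\mathcal L_\eta \widetilde \phi = 0$. Since $\omega : \wedge^2 D_\R L \to L$ is $L$-valued (thus of weight $1$), it lifts to a $2$-form $\widetilde \omega$ on $\widetilde M$ satisfying $\mathcal L_\eta \widetilde \omega = \widetilde \omega$. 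I would then set $\Jj := \left( \begin{smallmatrix} \widetilde \phi & \pi^\sharp \\ \widetilde \omega_\flat & -\widetilde \phi^\ast \end{smallmatrix}\right)$ and use the identification $p^\ast \mathbb D L \simeq \mathbb T \widetilde M$ to pull back $\Ii$ to $\Jj$. The three defining properties (almost complex, skew-symmetric, integrable) transfer directly because the pairing and Dorfman bracket on $\mathbb T \widetilde M$ are matched with those on $\mathbb D L$ under the dictionary; the homogeneity of $\eta$ for $\Jj$ is exactly the bundle of conditions collected on $\pi$, $\widetilde \phi$, $\widetilde \omega$ above. This gives $(\widetilde M, \Jj, \eta)$ as a homogeneous generalized complex manifold.

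For the converse, start with $(N, \Jj, \eta_N)$ and a point where $\eta_N \ne 0$. As in the proof of Proposition \ref{prop:J_hP}, there is an open neighborhood $U \simeq (a,b) \times M$ of this point where $\eta_N = t \partial_t$, giving a line bundle $L \to M$ whose sections are weight-$1$ functions on $U$, and an inclusion $U \hookrightarrow \widetilde M$ as an open submanifold. Decompose $\Jj = \left( \begin{smallmatrix} \phi_N & \pi_N^\sharp \\ (\omega_N)_\flat & -\phi_N^\ast \end{smallmatrix}\right)$. The conditions $\mathcal L_{\eta_N} \pi_N = -\pi_N$, $\mathcal L_{\eta_N} \phi_N = 0$, $\mathcal L_{\eta_N} \omega_N = \omega_N$ mean exactly that $\pi_N$ descends (via the Poissonization) to a Jacobi bi-derivation $J$ on $L$, that $\phi_N$ is the lift of an endomorphism $\phi : D_\R L \to D_\R L$, and that $\omega_N$ is the lift of an $L$-valued form $\omega : \wedge^2 D_\R L \to L$. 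Packaging the triple $(J, \phi, \omega)$ into an endomorphism $\Ii$ of $\mathbb D L$ via the formula in the definition, the integrability, almost-complexity and skew-symmetry of $\Jj$ translate, via the identification $p^\ast \mathbb D L \simeq \mathbb T \widetilde M$, into the corresponding properties of $\Ii$.

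The main obstacle will be the precise verification that the identification $p^\ast \mathbb D L \simeq \mathbb T \widetilde M$ really intertwines the Dorfman--Jacobi bracket $[\![-,-]\!]$ on $\mathbb D L$ with the Dorfman bracket on $\mathbb T \widetilde M$ under the homogeneity constraint. The subtle point is that the ``$\mathfrak j^1_\R \langle \rho, \nabla \rangle$'' term in $[\![-,-]\!]$ must be matched with the ``$d\rho(\zeta)$'' term in the Dorfman bracket, and this works precisely because jet prolongation on $M$ corresponds under Poissonization to exterior differentiation of weight-$1$ forms on $\widetilde M$. Once this compatibility is installed, the remainder is a matter of transporting the three defining conditions of a generalized complex (respectively contact) structure back and forth along the dictionary, which is essentially formal.
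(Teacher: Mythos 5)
Your proposal is correct and takes essentially the same route as the paper: the paper's own proof simply cites \cite[Remark 3.6, arXiv version]{VW2016} for the forward direction and declares the converse to be ``along similar lines'' as Theorem \ref{theor:JN_hPN}, which is precisely the Poissonization dictionary ($p^\ast D_\R L \simeq T\widetilde M$ together with the weight-one identification of jets with $1$-forms) that you spell out. The point you flag---matching the $\mathfrak j^1_\R \langle \rho, \nabla\rangle$ term of the Dorfman--Jacobi bracket with the $d\rho(\zeta)$ term of the Dorfman bracket under the weight-one correspondence---is indeed the only nontrivial verification, and it goes through as you describe.
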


\begin{proof}
We discussed in \cite[Remark 3.6, arXiv version]{VW2016} that $\pi$ can be canonically completed to a homogeneous generalized complex structure $(\Jj, \eta)$ (see Definition \ref{def:hgc}):
$\Jj =
\left( \begin{smallmatrix}
\widetilde \phi & \pi^\sharp \\
\widetilde \omega_\flat & -\widetilde \phi{}^\ast
\end{smallmatrix} \right)$.
The second part of the statement can be proved along similar lines as in the proof of Theorem \ref{theor:JN_hPN}. We leave the details to the reader.
\end{proof}

\begin{remark}\label{rem:one-to-one}
Let $L \to M$ be a real line bundle and let $\widetilde M = L^\ast \smallsetminus \{0\}$ be its \emph{slit dual}. Denote by $\eta$ the Euler vector field on $\widetilde M$. We proved that, given a bi-derivation $J$ of $L$ and an endomorphism $\phi : D_\R L \to D_\R L$, we can canonically construct a bi-vector $\pi$ and a (1,1) tensor $\widetilde \phi$ on $\widetilde M$. This construction establishes a one-to-one correspondence between pairs $(J, \phi)$ consisting of a bi-derivation $J$ of $L$ and an endomorphism $\phi$ of $DL$, and pairs $(\pi, \widetilde \phi)$ consisting of a bi-vector and a (1,1) tensor on $\widetilde M$ such that
$h_r^\ast (\pi) = r^{-1} \pi$, and $h_r^\ast (\phi) = \phi$ for all $r \in \R^\times$, or, equivalently, $\Ll_\eta \pi = -\pi$, $h_{-1}^\ast (\pi) = -\pi$, and $\Ll_\eta \widetilde \phi = 0$, $h_{-1}^\ast (\phi) = \phi$. Finally, this correspondence identifies Jacobi-Nijenhuis structure $(L, J, \phi)$ on $M$, and homogeneous Poisson-Nijenhuis structures $(\pi, \widetilde \phi, \eta)$ on $\widetilde M$, with the additional properties that $h_{-1}^\ast (\pi) = -\pi$, and $h_{-1}^\ast (\phi) = \phi$ (and similarly for generalized contact and homogeneous generalized complex structures). Notice again that the last condition is unavoidable as the Lie group $\R^\times$ is not connected.
\end{remark}

\subsection{Holomorphic Jacobi and homogeneous holomorphic Poisson manifolds}

In this section we show that, similarly as for real Jacobi manifolds, holomorphic Jacobi manifolds are just equivalent to certain homogeneous holomorphic Poisson manifolds. In their turn homogeneous holomorphic Poisson manifolds are related to homogeneous Poisson-Nijenhuis manifolds and to homogeneous generalized complex manifolds (Theorem \ref{theor:hhP_hPN_hgc}). Finally homogeneous Poisson-Nijenhuis manifolds are equivalent to certain Jacobi-Nijenhuis manifolds (Theorem \ref{theor:JN_hPN}), and homogeneous generalized complex manifolds are  equivalent to certain generalized contact bundles (Theorem \ref{theor:gc_hgc}). In this way, we depict the following picture
\[  
\xymatrix{ \text{Jacobi-Nijenhuis} \ar@{<=>}[d] & \text{holomorphic Jacobi} \ar@{=>}[r] \ar@{=>}[l] \ar@{<=>}[d] & \text{generalized contact} \ar@{<=>}[d] \\
\txt{homogeneous \\ Poisson-Nijenhuis} \ 
& \txt{homogeneous \\ holomorphic Poisson}\  \ar@{=>}[r] \ar@{=>}[l] & \txt{homogeneous \\ generalized complex}\ 
}
\]
However, there is a difference between the holomorphic Jacobi and the holomorphic Poisson case. Namely, every holomorphic Poisson manifold is itself a (real) Poisson-Nijenhuis manifold. On the other hand, a holomorphic Jacobi structure on a complex manifold $X = (M,j)$ gives rise to a Jacobi-Nijenhuis structure, but not on $M$, rather on a larger manifold, specifically, an appropriate circle bundle over $M$. This phenomenon makes holomorphic Jacobi manifolds particularly interesting and have consequences also on the associated integration problem (see \cite{VW2019}).

\subsubsection{From holomorphic Jacobi to homogeneous holomorphic Poisson manifolds}

Let $L \to X$ be a holomorphic line bundle over a complex manifold $X = (M,j)$ and let $L^\ast$ be its complex dual. In the following we denote by $\widetilde M$ the \emph{slit complex dual} of $L$, i.e. $\widetilde M := L^\ast \smallsetminus \{0 \}$ (beware that in the previous section, we denoted by the same symbol $\widetilde M$ a different object, namely the \emph{slit real dual} of a \emph{real} line bundle). The real manifold $\widetilde M$ is equipped with a complex structure $\widetilde j $ induced by the complex structure on $L$, so that $\widetilde X = (\widetilde M, \widetilde j)$ is a holomorphic principal bundle over $X$ with structure group the multiplicative $\C^\times := \C \smallsetminus \{0\}$. Denote by $p : \widetilde X \to X$ the projection. Let $H \in \Gamma (T^{1,0} \widetilde X)$ be the restriction to $\widetilde X$ of the holomorphic Euler vector field on $L^\ast$. Then $H$ is the fundamental vector field corresponding to the canonical generator $1$ in the complex Lie algebra $\C$ of the structure group $\C^\times$. It is easy to see that $H = \frac{1}{2} (\eta -i \widetilde j  \eta)$, where $\eta \in \mathfrak X (\widetilde M)$ is the (restriction of) the real Euler vector field of the real (rank $2$) vector bundle $L^\ast \to M$.

\begin{proposition}\label{prop:Poissonization}
A holomorphic Jacobi bracket $J \equiv \{-,-\}$ on $L \to X$ determines canonically a  homogeneous holomorphic Poisson structure $(\Pi, H)$ on $\widetilde X$. Additionally, $J$ comes from a holomorphic contact structure (see Example \ref{ex:cc}) if and only if $\Pi$ comes from a holomorphic symplectic structure.
\end{proposition}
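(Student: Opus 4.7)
The plan is to mimic the real Poissonization of Proposition~\ref{prop:J_hP} inside the holomorphic category, using Lemma~\ref{Lem:JacMan} to unpack $J$ as a sheaf of Jacobi $\Oo_X$-modules on $\Gamma_L$. Since $H$ is the holomorphic Euler field on $\widetilde X$, the assignment $\lambda \mapsto \widetilde{\lambda}$ identifies $\Gamma_L$ with the subsheaf of $\Oo_{\widetilde X}$ consisting of fiber-wise linear holomorphic functions, i.e.~those $f$ with $\Ll_H f = f$; pulled-back functions $p^{\ast} a$, $a \in \Oo_X$, are the $H$-invariant ones. Locally, fixing a holomorphic frame $\mu$ of $L$ over a coordinate patch $U \subset X$, the algebra $\Oo_{\widetilde X}(p^{-1}(U))$ is generated as an $\Oo_X(U)$-algebra by $\widetilde{\mu}$ and $\widetilde{\mu}^{-1}$.

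I would then define a holomorphic Poisson bracket $\{-,-\}_{\widetilde X}$ on $\Oo_{\widetilde X}$ by its values on this generating set,
\[
\{\widetilde{\lambda}_1, \widetilde{\lambda}_2\}_{\widetilde X} := \widetilde{\{\lambda_1, \lambda_2\}}, \quad \{\widetilde{\lambda}, p^{\ast} a\}_{\widetilde X} := p^{\ast}(R_\lambda a), \quad \{p^{\ast} a, p^{\ast} b\}_{\widetilde X} := 0,
\]
for $\lambda, \lambda_i \in \Gamma_L$ and $a, b \in \Oo_X$, and extending by $\C$-bilinearity, skew-symmetry, and the Leibniz rule. Consistency of this extension under frame changes $\mu \leadsto h \mu$ (with $h \in \Oo_X^{\times}$) reduces to the Jacobi-module axiom $\{\lambda_1, a \lambda_2\} = R_{\lambda_1}(a) \lambda_2 + a \{\lambda_1, \lambda_2\}$, so the local brackets patch into a global holomorphic bi-vector $\Pi \in \Gamma(\wedge^2 T^{1,0} \widetilde X)$. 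The Jacobi identity $[\Pi, \Pi]^{SN} = 0$ is then verified on triples from the generating set; the non-trivial cases are controlled, successively, by the facts that each $R_\lambda$ is a derivation of $\Oo_X$, that $R : \Gamma_L \to \operatorname{Der} \Oo_X$ is a Lie algebra homomorphism, and that $\{-,-\}$ itself satisfies the Jacobi identity.

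The homogeneity $\Ll_H \Pi = -\Pi$ is then immediate from a weight count: each defining bracket lowers the sum of the $H$-weights of its entries by~$1$, which forces $\Pi$ to have weight~$-1$. For the complex contact versus complex symplectic statement, I would invoke Example~\ref{ex:cc}: $J$ comes from a complex contact structure precisely when the holomorphic sharp map $J^\sharp : \mathfrak J^1 L \to DL$ is an isomorphism of holomorphic bundles. Pulling back along $p$ and using the holomorphic counterparts of the identifications underlying the proof of Theorem~\ref{theor:JN_hPN}, namely $p^{\ast} DL \simeq T^{1,0} \widetilde X / \langle H \rangle$ and $p^{\ast} \mathfrak J^1 L \simeq \operatorname{ann}(H) \subset (T^{1,0}\widetilde X)^{\ast}$, the map $J^\sharp$ is identified with the map induced on these quotients by $\Pi^\sharp$. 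Using $\Ll_H \Pi = -\Pi$ one checks that this induced map is invertible if and only if $\Pi^\sharp : (T^{1,0}\widetilde X)^{\ast} \to T^{1,0}\widetilde X$ is, i.e.~if and only if $\Pi$ is complex symplectic.

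The main obstacle is the gluing step: verifying that the generator-wise prescription is consistent under changes of holomorphic trivialization of $L$ and extends uniquely to a globally defined holomorphic Poisson bracket on $\Oo_{\widetilde X}$. Once this is in hand, the remaining verifications parallel their real analogues from Proposition~\ref{prop:J_hP} and Theorem~\ref{theor:JN_hPN}, and are routine.
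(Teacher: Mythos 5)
Your overall strategy---build $\Pi$ from the bracket of fiber-wise linear holomorphic functions, get homogeneity by a weight count, and reduce the contact/symplectic equivalence to nondegeneracy of $J^\sharp$ versus $\Pi^\sharp$---is the same as the paper's. However, the generator-wise prescription you use to construct $\Pi$ is internally inconsistent, and this is a genuine gap rather than a routine gluing issue. The three formulas you impose are not independent: writing $p^\ast a = \widetilde{a\mu}\cdot\widetilde{\mu}{}^{-1}$ in a local holomorphic frame $\mu$ and applying the Leibniz rule to your first formula forces
\begin{equation*}
\{p^\ast a, p^\ast b\}_{\widetilde X} \;=\; \widetilde{\mu}{}^{-1}\, p^\ast\bigl(R_{a\mu}(b) - a\,R_\mu(b)\bigr),
\end{equation*}
a function of weight $-1$ which is nonzero in general, because $\lambda \mapsto R_\lambda$ is a first-order operator and not $\Oo_X$-linear (for the trivial bundle, with $J$ encoded by a bi-vector/vector field pair $(\Lambda,E)$, this expression is exactly $\widetilde{\mu}{}^{-1}\Lambda(\partial a,\partial b)$). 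So declaring $\{p^\ast a, p^\ast b\}_{\widetilde X} := 0$ contradicts $\{\widetilde{\lambda}_1,\widetilde{\lambda}_2\}_{\widetilde X} = \widetilde{\{\lambda_1,\lambda_2\}}$. Equivalently, if you take your three formulas as the definition on algebra generators, the resulting bracket gives $\{\widetilde{a\mu},\widetilde{b\mu}\}_{\widetilde X} = \widetilde{(a R_\mu(b) - b R_\mu(a))\mu}$ instead of $\widetilde{(R_{a\mu}(b) - b R_\mu(a))\mu}$: it retains only the ``$R$-part'' of $J$ and discards its bi-vector part, so it is not the Poissonization. The consistency check you flag as the main obstacle would in fact fail.

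The repair is to impose only the first formula. Since the differentials $\partial\widetilde{\lambda}$ of fiber-wise linear holomorphic functions span $(T^{1,0}\widetilde X)^\ast$ at every point of $L^\ast\smallsetminus 0$, the single condition $\{\widetilde{\lambda}_1,\widetilde{\lambda}_2\}_{\widetilde X} = \widetilde{\{\lambda_1,\lambda_2\}}$ already determines $\Pi$ uniquely---this is exactly what the paper does---and the values on $p^\ast \Oo_X$ are then consequences (the second of your formulas is correct, the third is not). With that correction, the remaining steps (homogeneity, equivalence of the Jacobi identity for $J$ with $[\Pi,\Pi]^{SN}=0$, and the contact/symplectic statement) go through; note that the paper settles the last point by an explicit computation in complex contact Darboux coordinates rather than by the abstract identification of $J^\sharp$ with the map induced by $\Pi^\sharp$.
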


\begin{proof}
The proof is similar to that of the analogous statement in the smooth category. We report a sketch here for completeness. First of all, denote by $h : \C^\times \times \widetilde X \to \widetilde X$, $(c, \epsilon) \mapsto h_c (\epsilon)$ the principal action, and notice that holomorphic sections of $L \to X$ are in one-to-one correspondence with homogeneous holomorphic functions on $\widetilde X$, i.e.~functions $f \in \Oo_{\widetilde X}$ such that $h_c^\ast (f) = cf$ for all $c \in \C^\times$. Equivalently, homogeneous holomorphic functions $f$ can be characterized by the simpler condition $\mathcal L_{H} f = f$ (unlike in the real case, we don't need further conditions because the Lie group $\C^\times$ is \emph{connected}). For any holomorphic section $\lambda$ of $L \to X$, we denote by $\widetilde \lambda$ the corresponding homogeneous holomorphic function on $\widetilde X$. Now, let $J \equiv \{-,-\} \in \Gamma (D^{2,0}_\C L)$ be a holomorphic bi-derivation of $L \to X$, i.e.~$\overline \partial{}^{DL} J = 0$. It is easy to see that there exists a unique holomorphic bivector $\Pi$ on $\widetilde X$ such that
\[
\{ \widetilde \lambda_1, \widetilde \lambda_2 \}_{\widetilde{X}}  = \widetilde{\{ \lambda_1, \lambda_2 \} },
\] 
for all holomorphic sections $ \lambda_1, \lambda_2$ of $L$, where $\{-,-\}_{\widetilde{X}} : \Oo_{\widetilde X} \times \Oo_{\widetilde X} \to \Oo_{\widetilde X}$ is the skew-symmetric bracket corresponding to $\pi$, i.e.~$\{f_1,f_2\}_{\widetilde{X}} = \Pi (\partial f_1, \partial f_2)$. Additionally, $\Ll_H \Pi = -\Pi$. Finally, $\Pi$ is a \emph{holomorphic Poisson bi-vector}, hence $(\Pi, H)$ is a  homogeneous holomorphic Poisson structure on $\widetilde X$, if and only if $J$ is a Jacobi bracket, i.e.~it satisfies the Jacobi identity.

The second part of the statement can be proved, e.g., in local coordinates. We leave the details to the reader. We only remark that, if $\mu$ is a holomorphic section which generates $\Gamma (L)$ locally in its domain, and $\Cc \subset T^{1,0}M$ is a holomorphic contact structure with $L = T^{1,0}M / \Cc$, then, in view of the \emph{holomorphic contact Darboux lemma} (see, e.g., \cite[Theorem A.2]{AFL2017}) there are complex coordinates $(t,z^i, P_i)$ on $X$ such that
\[
\theta_\Cc = \left( dt - P_k dz^k \right) \otimes \mu .
\]  
where $\theta_\Cc : T^{1,0}M \to L$ is the projection. It is now easy to see that the corresponding holomorphic (homogeneous) symplectic structure $\Omega$ on $\widetilde X$ is locally given by
\[
\Omega = d \widetilde \mu \wedge (dt - P_k dz^k) - \widetilde \mu \, dP_k \wedge dz^k.
\]
\end{proof}

\begin{remark}\label{rem:one-to-one_2}
Let $L \to X$ be a holomorphic line bundle and let $\widetilde X = L^\ast \smallsetminus \{0 \}$ be its \emph{slit complex dual}. Denote by $H$ the Euler vector field on $\widetilde X$. The proof of Proposition \ref{prop:Poissonization} shows that, given a holomorphic bi-derivation $J$ of $L$, we can canonically construct a holomorphic bi-vector $\Pi$ on $\widetilde X$. This construction establishes a one-to-one correspondence between holomorphic bi-derivations of $L$ and homogeneous holomorphic bi-vectors on $\widetilde X$. Additionally, this correspondence identifies holomorphic Jacobi structures $(L, J)$ on $X$, and  homogeneous holomorphic Poisson structures $(\Pi, H)$ on $\widetilde X$ (and we don't need further conditions because the Lie group $\C^\times$ is connected).
\end{remark}

\begin{example}
Let $\mathfrak g$ be a complex Lie algebra and let $(\Pi, H)$ be the  homogeneous holomorphic Poisson structure on its complex dual $\mathfrak g^\ast$ (see Example \ref{ex:complex_Lie_algebra}). We also consider the complex projective space $\C\P (\mathfrak g^\ast)$. Call it $X$, and denote by $L \to X$ the complex dual of the tautological (line) bundle over $\C\P (\mathfrak g^\ast)$. Clearly, $\widetilde X = \mathfrak g^\ast \smallsetminus \{0 \}$. Additionally $H$ identifies with the Euler vector field on $\widetilde X$. Together with Remark \ref{rem:one-to-one_2}, this shows that $L \to X$ is equipped with a canonical holomorphic Jacobi bundle structure.
\end{example}

\begin{example}
Let $X_0$ be a complex manifold and recall that the cotangent bundle $T^\ast X_0$ is equipped with a canonical homogeneous holomorphic symplectic structure $(\Omega, H)$ where $\Omega$ is the canonical holomorphic symplectic form and $H$ is the holomorphic Euler vector field (see Example \ref{ex:holomorphic_cotangent_bundle}). We also consider the complex projective bundle $\C\P(T^\ast X_0)$. Call it $X$, and denote by $L \to X$ the complex dual of the tautological (line) bundle over $\C\P(T^\ast X_0)$. Clearly, $\widetilde X = T^\ast X_0 \smallsetminus \{0\}$, and $H$ identifies with the Euler vector field on $\widetilde X$. Hence there is a canonical holomorphic contact structure on $\C\P(T^\ast X_0)$. The latter agrees with the standard holomorphic contact structure on the complex manifold of holomorphic contact elements in $X_0$.
\end{example}

\begin{example}
More generally, let $A \to X_0$ be a holomorphic Lie algebroid and let $(\Pi, H)$ be the  homogeneous holomorphic Poisson structure on its complex dual $A^\ast$ (see Example \ref{example:holomorphic_dual}). We also consider the complex projective bundle $\C\P (A^\ast)$. Call it $X$, and denote by $L \to X$ the complex dual of the tautological (line) bundle over $\C\P (A^\ast)$. Clearly, $\widetilde X = A^\ast \smallsetminus \{0\}$. Additionally $H$ identifies with the Euler vector field on $\widetilde X$. This shows that $L \to X$ is equipped with a canonical holomorphic Jacobi bundle structure.
\end{example}

\begin{definition}
The holomorphic Poisson manifold $(\widetilde X, \Pi, H)$ of Proposition \ref{prop:Poissonization} is called the \emph{Poissonization} of the holomorphic Jacobi manifold $(X, L, J)$.
\end{definition}

\subsubsection{From homogeneous holomorphic Poisson to holomorphic Jacobi manifolds}

\begin{proposition}\label{prop:hJ_hhP}
Every  homogeneous holomorphic Poisson manifold is the Poissonization of a canonical holomorphic Jacobi manifold around a non-singular point of the homogeneity vector field.
\end{proposition}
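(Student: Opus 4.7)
The plan is to mimic the proof of Proposition \ref{prop:J_hP} in the holomorphic category, then invoke Remark \ref{rem:one-to-one_2} to upgrade the resulting holomorphic bi-vector on $\widetilde X$ to a holomorphic Jacobi structure on $L$. Let $(Y,\Pi,H)$ be a homogeneous holomorphic Poisson manifold, with $Y=(N,j_N)$, and pick $x\in N$ with $H_x\neq 0$. By the holomorphic flow-box theorem applied to the nowhere-zero holomorphic vector field $H$, there exist local holomorphic coordinates $(w,z^1,\ldots,z^n)$ centered at $x$ in which $H=\partial/\partial w$. After shrinking the domain so that $w$ ranges over a simply connected region, the substitution $\zeta=\exp w$ yields holomorphic coordinates $(\zeta,z^1,\ldots,z^n)$ on an open neighborhood $U\subset N$ of $x$ satisfying $H=\zeta\,\partial/\partial\zeta$, with $\zeta$ taking values in an open subset $\mathcal V\subset\C^\times$. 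Thus $U$ is biholomorphic to $\mathcal V\times X$ for a complex manifold $X$ locally parametrized by $(z^i)$, and the projection $p:U\to X$ is a holomorphic submersion whose fibers are the orbits of the local $\C^\times$-action infinitesimally generated by $H$.

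Next, let $L\to X$ be the holomorphic line bundle whose sheaf $\Gamma_L$ of holomorphic sections is, by construction, the sheaf of holomorphic functions $f$ on $U$ with $\Ll_H f=f$; in the local trivialization these are exactly the functions $f=\zeta\cdot g(z^1,\ldots,z^n)$ with $g\in\Oo_X$, so $\Gamma_L$ is a free $\Oo_X$-module of rank one and $L\to X$ is well defined. Since $\Ll_H\Pi=-\Pi$, the identity $\Ll_H\{f_1,f_2\}=\{\Ll_H f_1,f_2\}+\{f_1,\Ll_H f_2\}-\{f_1,f_2\}$ shows that the Poisson bracket of two $\Ll_H$-homogeneous holomorphic functions is again $\Ll_H$-homogeneous, so it restricts to a $\C$-bilinear Lie bracket on $\Gamma_L$. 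An analogous degree count shows that $\{\lambda,p^\ast a\}$ has $\Ll_H$-degree $0$ for $\lambda\in\Gamma_L$ and $a\in\Oo_X$, hence descends to an element $R_\lambda(a)\in\Oo_X$, and $a\mapsto R_\lambda(a)$ is a derivation of $\Oo_X$ by the bi-derivation property of $\{-,-\}_U$. Together these data satisfy $\{\lambda,a\mu\}=a\{\lambda,\mu\}+R_\lambda(a)\mu$ and equip $\Gamma_L$ with a Jacobi $\Oo_X$-module structure; Lemma \ref{Lem:JacMan} then produces the sought holomorphic Jacobi bundle $(L,J)$ over $X$.

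Finally, one realizes $U$ as an open holomorphic submanifold of $\widetilde X=L^\ast\smallsetminus 0$ via the universal map $i:U\to\widetilde X$ uniquely characterized by $\widetilde\lambda\circ i=\lambda$ for every $\lambda\in\Gamma_L$. Because $\Gamma_L$ and $p^\ast\Oo_X$ locally generate $\Oo_U$ (manifest in the trivialization, where $\zeta\in\Gamma_L$ is itself a coordinate), $i$ is an open holomorphic embedding; by construction it intertwines $H$ with the holomorphic Euler vector field on $\widetilde X$. The holomorphic bi-vectors $i_\ast\Pi$ and the Poissonization of $J$ on $\widetilde X$ therefore agree on all brackets of homogeneous holomorphic functions, so they coincide by the one-to-one correspondence of Remark \ref{rem:one-to-one_2}. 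Hence $(Y,\Pi,H)$ locally agrees with the Poissonization of $(X,L,J)$ around $x$, as required.

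The main obstacle is step one: putting $H$ in the Euler-type normal form $\zeta\,\partial/\partial\zeta$. This rests on the holomorphic flow-box theorem followed by a logarithmic change of coordinates, and some care is required regarding the choice of branch and the shape of the domain (one must ensure that the image of $\exp$ is open in $\C^\times$ and that the resulting $(\zeta,z^i)$ still form a chart). Once this normal form is secured, the remaining steps are straightforward holomorphic analogues of those in the proof of Proposition \ref{prop:J_hP}, combined with Remark \ref{rem:one-to-one_2} to recover the Jacobi structure from the bi-vector.
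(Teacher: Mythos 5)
Your proof is correct and follows essentially the same route as the paper's: normalize $H$ to the Euler form $w\,\partial/\partial w$ on a product $B\times X$, identify holomorphic sections of a line bundle $L\to X$ with $\Ll_H$-homogeneous holomorphic functions, restrict the Poisson bracket to obtain the Jacobi structure, and embed $U$ into the Poissonization. The only (cosmetic) difference is in the normal-form step: you use the holomorphic flow-box theorem followed by $\zeta=\exp w$, whereas the paper obtains $X$ intrinsically as the local leaf space of the integrable distribution spanned by $\eta_N$ and $j_N\eta_N$ before asserting the biholomorphism $U\simeq B\times X$.
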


\begin{proof}
The proof is similar to that of Proposition \ref{prop:J_hP}. We report it here for completeness. So, let $(N, j_N)$ be a complex manifold, and let $(\Pi_N, H_N)$ be a homogeneous holomorphic Poisson structure on it. Denote by $\eta_N$ twice the real part of $H$ so that $H = \frac{1}{2} (\eta_N -ij_N \eta_N)$ and let $x \in N$ be a point such that $(H_N)_x \neq 0$. Then $\eta_N$ and $j_N \eta_N$ span a two dimensional distribution $D$ on a whole neighborhood $U$ of $x$. Since $H$ is holomorphic, the distribution $D$ is integrable. Let $M$ be the space of leaves of $D$. We assume, which is always possible, upon shrinking $U$ if necessary, that $M$ is a smooth manifold and the projection $p: U \to M$ is a surjective submersion. Clearly, $j_N$ descends to a complex structure $j_M$ on $M$. Let $X = (M, j_M)$. Upon shrinking $U$ further if necessary, we can always achieve the following situation: 1) there is a biholomorphism $F : U \to B \times X$, where $B \subset \C$ is a ball not containing $0$, and 2) $F$ intertwines $H_N$ and $w \frac{d}{dw}$, $w$ being the canonical complex coordinate on $B$. In the following, we use $F$ to identify $U$ with $B \times X$, and $H_N$ with $w \frac{d}{dw}$. Then, the projection $p : U = B \times X \to X$ is the projection onto the second factor. Holomorphic functions $f$ on $U$ that are homogeneous with respect to $H_N$, i.e.~such that $\Ll_{H_N} f = f$, are holomorphic functions that are linear in the \emph{nowhere vanishing} coordinate $w$, and they form an $\Oo_X$-module $\Ll$. The value of any such function on a fiber $\mathcal F$ of $p$ is completely determined by its value on a point of $\mathcal F$. Hence $\Ll$ is the module of holomorphic sections of a holomorphic line bundle $L \to X$. Since $\Pi_N$ is homogeneous with respect to $H_N$, the the corresponding Poisson bracket restricts to a Jacobi bracket $\{-,-\}$ on $L$. Finally, similarly as in the real case, $U$ can be embedded into the Poissonization of $(X, L, \{-,-\})$ as an open homogeneous holomorphic Poisson submanifold. 
\end{proof}


\subsection{Holomorphic Jacobi, Jacobi-Nijenhuis and generalized contact manifolds}\label{sec:HJ_JN_GC}

As we briefly said, in the introduction, there are subtle differences between holomorphic Jacobi structures and holomorphic Poisson structures. One of these aspects is that  holomorphic line bundles are rank 2 (not rank 1) real vector bundles. Thus, a direct homogenization method won't work. To deal with this difficulty, the key step is to construct an intermediary real line bundle over a manifold $\widehat{M}$ starting from a holomorphic line bundle. We will see in this way that  holomorphic Jacobi structures are, in some sense, richer versions of holomorphic Poisson structures. 
\medskip 

Let $X = (M, j)$ be a complex manifold, let $L \to X$ be a holomorphic line bundle, and let $J \in \Gamma (D^{2,0}_\C L)$ be a holomorphic bi-derivation of $L$. Construct $(\widetilde X, \Pi, H)$ as in the proof of Proposition \ref{prop:Poissonization}, $\widetilde X = (\widetilde M, \widetilde j )$ (if, in particular, $(X,L,J)$ is a holomorphic Jacobi manifold, then $(\widetilde X, \Pi, H)$ is its Poissonization). We have $\Pi = \pi' + i \pi$ for some real Poisson bi-vectors $\pi, \pi'$ on $\widetilde M$, and $H = \frac{1}{2}(\eta -i\widetilde j  \eta)$, where $\eta$ is the real Euler vector field on $\widetilde M$. Additionally, $\Ll_\eta \pi = -\pi$, $\Ll_\eta \pi' = -\pi'$, and $\Ll_\eta \widetilde j = 0$. Now, notice that $\widetilde M$ can be given the structure of a principal $\R^\times$-bundle so that $\eta$ is the fundamental vector field corresponding to $1 \in \R$. Indeed, consider the real projective bundle $\R \P (L^\ast)$ (of the rank two real vector bundle $L \to M$). Denote it by $\widehat M$. Let $\ell \to \widehat M$ be the real dual of the real tautological line bundle over $\widehat M = \R \P (L^\ast)$. Clearly, $\widetilde M$ identifies canonically with the slit real dual of $\ell \to \widehat M$, i.e.~$\widetilde M = \ell^\ast \smallsetminus \{0\}$. Additionally, $\eta$ identifies with the (restriction of the) Euler vector field on (the total space of the real line bundle) $\ell^\ast \to \widehat M$. It follows that (see Remark \ref{rem:one-to-one})
\begin{enumerate}
\item $\pi,\pi'$ correspond to bi-derivations $\widehat J, \widehat J{}'$ of $\ell \to \widehat M$,
\item $\widetilde j$ corresponds to an endomorphism $\widehat j : D_\R \ell \to D_\R \ell$.
\end{enumerate}

\begin{theorem}\label{theor:hJ_JN_gc}
Let $X = (M,j)$ be a complex manifold, let $L \to X$ be a holomorphic line bundle, and let $J \in \Gamma(D^{2}_\C L)$ be a complex bi-derivation of the complex line bundle $L\to M$. Consider the bi-derivations $\widehat J, \widehat J{}'$ of the real line bundle $\ell \to \widehat M$, and the endomorphism $\widehat j : D_\R \ell \to D_\R \ell$, as constructed above. Then, the following conditions are equivalent
\begin{enumerate}
\item $(L, J)$ is a holomorphic Jacobi structure on $X$,
\item $(\widehat J, \widehat j)$ is a Jacobi-Nijenhuis structure on $\widehat M$, and $\widehat J{}' = \widehat J_{\widehat j}$,
\item $
\left(\begin{smallmatrix}
\widehat j & \widehat J{}^\sharp \\
0 & - \widehat j{}^\dag
\end{smallmatrix}\right)
$ is a generalized contact structure on $\ell \to \widehat M$, and $\widehat J{}' = \widehat J{}_{\widehat j}$.
\end{enumerate}
Additionally, $\widehat J$ (and hence $\widehat J{}'$) comes from a contact structure if and only if $J$ comes from a holomorphic contact structure.
\end{theorem}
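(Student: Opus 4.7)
The plan is to chain together previously established equivalences applied to two successive slit-dual constructions. First, I pass from the holomorphic line bundle $L \to X$ to its slit complex dual $\widetilde X = L^\ast \smallsetminus 0$; then I view $\widetilde X$ as the slit real dual of the real line bundle $\ell \to \widehat M$, where $\widehat M = \R\P(L^\ast)$. Each descent already has a ready-made equivalence theorem in the paper, so the work reduces to verifying that the various auxiliary conditions ($\pi' = \pi_{\widetilde j}$, $\widehat J{}' = \widehat J_{\widehat j}$, non-degeneracy) are compatible with the descents.

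First I would invoke Remark \ref{rem:one-to-one_2}: holomorphic bi-derivations $J$ of $L$ correspond bijectively to holomorphic bi-vectors $\Pi$ on $\widetilde X$ with $\mathcal L_H \Pi = -\Pi$, and under this correspondence $(L,J)$ is a holomorphic Jacobi structure iff $(\Pi,H)$ is a homogeneous holomorphic Poisson structure on $\widetilde X$. Writing $\Pi = \pi' + i\pi$ and $H = \tfrac12(\eta - i\widetilde j \eta)$ as in Theorem \ref{theor:hhP_hPN_hgc}, I then apply that theorem to $\widetilde X$: $(\widetilde X,\Pi,H)$ is a homogeneous holomorphic Poisson manifold iff $(\widetilde M,\pi,\widetilde j,\eta)$ is a homogeneous Poisson Nijenhuis manifold with $\pi' = \pi_{\widetilde j}$, and iff $\Jj = \left(\begin{smallmatrix}\widetilde j & \pi^\sharp \\ 0 & -\widetilde j^\ast\end{smallmatrix}\right)$ (together with $\eta$) is a homogeneous generalized complex structure on $\widetilde M$, with the same side conditions. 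Holomorphicity of $H$ automatically gives $\mathcal L_\eta \widetilde j = 0$, which is what is needed to apply the next step.

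Next, I observe that $\widetilde M = \ell^\ast \smallsetminus 0$ and that $\eta$ is precisely the real Euler vector field of $\ell \to \widehat M$. Applying Remark \ref{rem:one-to-one}, the pair $(\pi,\widetilde j)$ on $\widetilde M$ transports to $(\widehat J,\widehat j)$ on $\widehat M$. By Theorem \ref{theor:JN_hPN} (resp.\ Theorem \ref{theor:gc_hgc}), this correspondence identifies homogeneous Poisson Nijenhuis structures on $\widetilde M$ with Jacobi Nijenhuis structures on $\widehat M$ (resp.\ homogeneous generalized complex structures with generalized contact structures). Chaining these bijections yields $(1)\Leftrightarrow(2)$ and $(1)\Leftrightarrow(3)$, provided the residual condition $\pi' = \pi_{\widetilde j}$ on $\widetilde M$ translates into $\widehat J{}' = \widehat J_{\widehat j}$ on $\widehat M$.

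That last compatibility is the main obstacle, but it is a routine check rather than a conceptual difficulty. Concretely I would verify that the assignment $(\pi,\widetilde \phi) \mapsto (\widehat J,\widehat \phi)$ of Remark \ref{rem:one-to-one} is equivariant with respect to the twist operation, i.e.\ that $\pi_{\widetilde \phi}$ on $\widetilde M$ corresponds to $\widehat J_{\widehat \phi}$ on $\widehat M$. This can be seen by evaluating both sides on pairs of jets $\mathfrak j^1_\R \lambda_1, \mathfrak j^1_\R \lambda_2$ and using the defining identity $\widetilde{\{\lambda_1,\lambda_2\}} = \{\widetilde \lambda_1,\widetilde \lambda_2\}_{\widetilde M}$ together with the intertwining $\widetilde{\phi(\Delta)} = \widetilde \phi(\widetilde \Delta)$ of the embedding $\Gamma(D_\R \ell) \hookrightarrow \mathfrak X(\widetilde M)$. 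Finally, for the contact statement: by Proposition \ref{prop:Poissonization}, $J$ comes from a complex contact structure iff $\Pi$ is a complex symplectic bi-vector, which in turn is equivalent to $\pi$ being a (real) symplectic bi-vector on $\widetilde M$; applying the real analogue of Proposition \ref{prop:Poissonization} (Jacobi-to-Poisson) then equates this with $\widehat J$ coming from a contact structure on $\widehat M$, and $\widehat J_{\widehat j}$ is automatically contact as well since the twist of a non-degenerate Jacobi structure by an invertible endomorphism is non-degenerate.
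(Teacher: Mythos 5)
Your proposal is correct and follows essentially the same route as the paper: the paper's own proof is precisely the citation chain Remark \ref{rem:one-to-one_2} / Proposition \ref{prop:hJ_hhP} $\to$ Theorem \ref{theor:hhP_hPN_hgc} $\to$ Theorems \ref{theor:JN_hPN} and \ref{theor:gc_hgc} with Remark \ref{rem:one-to-one}, plus a local-coordinate check for the contact statement (cf.\ Example \ref{ex:hc}). Your explicit verification that the twist $\pi_{\widetilde j} \mapsto \widehat J_{\widehat j}$ is compatible with the descent to $\widehat M$ is a detail the paper leaves implicit, and it is handled correctly.
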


\begin{proof}
The first part of the statement immediately follows from Proposition \ref{prop:hJ_hhP}, and Theorems \ref{theor:hP_PN_gc}, \ref{theor:JN_hPN}, \ref{theor:gc_hgc} (see also Remarks \ref{rem:one-to-one} and \ref{rem:one-to-one_2}). The last part can be easily proved in local coordinates (see also Example \ref{ex:hc} below).
\end{proof}

The situation is summarized in the following diagram

\[
\begin{array}{c}
\xymatrix{ \text{ homogeneous holomorphic Poisson} & & \widetilde X \ar[dl]_-{\text{$\R^\times$}} \ar[dd]^-{\text{$\C^\times$}}  \\
                  \text{Jacobi-Nijenhuis / generalized contact} & \widehat M \ar[dr]_-{\text{$S^1$}} & \\
                  \text{holomorphic Jacobi} & & X}
                  \end{array}.
\]

\begin{remark}
Let $(X, L, J)$ be a holomorphic Jacobi manifold, and let $\widehat J, \widehat j$ be as in Theorem \ref{theor:hJ_JN_gc}. From Proposition \ref{prop:JbH}, we deduce that $(\widehat{J}, \widehat{J}_{\widehat j})$ is a pair of compatible  Jacobi structures on $ \ell \to \widehat M$, i.e.  $\widehat{J} + \widehat{J}_{\widehat j}$ is also a Jacobi structure.
\end{remark}

\begin{remark}
Let $(X, L, J)$ be a holomorphic Jacobi manifold, and let $\eta, \pi, \widetilde j, \widehat J, \widehat j$ be as above. From $[\widetilde j  \eta, \eta] = 0$, it follows that $\widetilde j  \eta$ corresponds to a derivation $\Delta$ of $\ell \to \widehat M$. It is easy to see that $\Delta = \widehat j \mathbb 1$, where $\mathbb 1 : \Gamma (L) \to \Gamma(L)$ is the identity derivation. From 
\[
\Ll_{\widetilde j \eta} \pi_{\widetilde j} = \pi, \quad \text{and} \quad \Ll_{\widetilde j \eta} \pi = - \pi_{\widetilde j}
\]
(see (\ref{eq:L_jeta_pi})) now follows that
\[
[\widehat j \mathbb 1, \widehat J_{\widehat j}]^{SJ} =\widehat J \quad \text{and} \quad [ \widehat j \mathbb 1, \widehat J]^{SJ} = - \widehat J_{\widehat j},
\]
where $[-,-]^{SJ}$ is the \emph{Schouten-Jacobi bracket}.
\end{remark}

\begin{example}\label{ex:hc}
This example can be generalized to any holomorphic contact manifold and is originally due to Kobayashi \cite{K1959}. Let $X = \C^{2n+1}$, with complex coordinates 
\[
(t = r + is, z^k =x^k + iy^k, P_k = m_k + i q_k)
\]
and let 
\[
\theta = dt - P_k dz^k
\]
be the canonical holomorphic contact $1$-form on it. Then $\widetilde X = \C^{2n + 1} \times \C^\times$, and it has an additional nowhere vanishing coordinate $w = u + i v$. The holomorphic Euler vector field is $H = w \frac{\partial}{\partial w}$ and the homogeneous symplectic structure on $\widetilde X$ is 
\[
\Omega = dw \wedge (dt -P_k dz^k) - w dP_i \wedge dz^k.
\] 
The real Euler vector field is $\eta = u \frac{\partial}{\partial u} + v \frac{\partial}{\partial v}$. It is actually convenient to use a polar representation for the coordinate $w$ and write $w = \rho e^{i \varphi / 2}$. Then $\eta = \rho \frac{\partial}{\partial \rho}$ and $\varphi$ can be seen as the fiber coordinate in the $S^1$-bundle
\[
\R \P (\C^{2n+1} \times \C \to \C^{2n+1}) \cong \C^{2n+1} \times S^1 \to \C^{2n+1}.
\]
Theorem \ref{theor:hJ_JN_gc} now implies that there are two contact structures $\vartheta$ and $\vartheta_j$ on $\C^{2n+1} \times S^1$. They are locally given by
\[
\vartheta = \operatorname{cos} (\varphi/2) \vartheta_r -\operatorname{sin}(\varphi/2) \vartheta_s
\]
and
\[
\vartheta_j =  - \operatorname{sin}(\varphi/2) \vartheta_r - \operatorname{cos} (\varphi/2) \vartheta_s ,
\]
where
\[
\vartheta_r := dr-m_kdx^k + q_k dy^k, \quad \text{and} \quad \vartheta_s = ds - m_k dy^k -q_k dx^k.
\]
\end{example}
\section{Lie algebroids of a holomorphic Jacobi manifold}\label{Sec:LieAlgJacobi}

Recall that a Poisson bi-vector $\pi$ on a real manifold $M$ determines a Lie algebroid structure on $T^\ast M$, denoted by $(T^\ast M)_\pi$ and called the \emph{cotangent Lie algebroid of the Poisson manifold $(M, \pi)$}. The anchor of $(T^\ast M)_\pi$ is given by $\pi^\sharp : T^\ast M \to TM$ and the Lie bracket is $[-,-]_\pi$ (see (\ref{eq:[-,-]_pi})).
 
Similarly, a holomorphic Poisson structure $\Pi$ on a complex manifold $X = (M,j)$ determines a holomorphic Lie algebroid structure on $T^\ast X \to X$, denoted by $(T^\ast X)_\Pi$ and called the \emph{cotangent Lie algebroid of $(X, \Pi)$} \cite{LSX2008}. Specifically, the anchor $\rho : T^\ast X \to TX$ is given by 
\[
\rho (\omega) := 2 \operatorname{Re} \left( \Pi^\sharp (\omega - ij^\ast \omega) \right)
\]
for all $\omega \in \Omega^1 (X)$, and the bracket $[-,-]_\Pi$ acts on two holomorphic sections $\omega_1, \omega_2$ of $T^\ast X \to X$ as
\[
[\omega_1, \omega_2]_\Pi = \Ll_{\rho (\omega_1)} \omega_2 - \Ll_{\rho (\omega_2)} \omega_1 - d \langle \rho (\omega_1), \omega_2 \rangle.
\]


Let $\Pi = \pi_j + i \pi$, with $\pi, \pi_j$ real Poisson structures on $M$.

\begin{proposition}[Laurent-Gengoux, Sti\'enon, and Xu \cite{LSX2008}]\label{prop:real_cotangent}
The real (resp.~imaginary) Lie algebroid of the holomorphic Lie algebroid $(T^\ast X)_\Pi$ is $(T^\ast M)_{4 \pi_j}$ (resp.~$(T^\ast M)_{4 \pi}$).
\end{proposition}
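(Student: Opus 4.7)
The plan is to verify the asserted equalities of real Lie algebroid structures on $T^\ast M$ by separately matching anchors and brackets. A real Lie algebroid structure on $T^\ast M$ is uniquely determined by its anchor together with its bracket on any $C^\infty(M)$-generating family of sections; since exact $1$-forms $df$ generate $\Omega^1(M)$ as a $C^\infty(M)$-module, it suffices to compare anchors on all real $1$-forms and brackets on pairs of exact $1$-forms. I will treat the real case first; the imaginary case follows by post-composing with the induced complex structure $j_A = j^\ast$ on the real bundle $T^\ast M$ coming from $T^\ast X$ being a holomorphic vector bundle.

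For the anchor, I would plug a real $1$-form $\omega$ into the formula $\rho(\omega) = 2\operatorname{Re}(\Pi^\sharp(\omega - ij^\ast\omega))$ and use that $\omega - ij^\ast\omega = 2\omega^{1,0}$ is twice the $(1,0)$-part of $\omega$ in the complexification. Expanding $\Pi^\sharp = \pi_j^\sharp + i\pi^\sharp$ and invoking the Poisson Nijenhuis compatibility $\pi^\sharp \circ j^\ast = j \circ \pi^\sharp$ (equivalently $\pi_j^\sharp = j\circ\pi^\sharp$), which is available from Theorem \ref{theor:hP_PN_gc} since $(\pi, j)$ is a Poisson Nijenhuis structure with $\pi' = \pi_j$, one computes directly $\rho(\omega) = 4\pi_j^\sharp(\omega)$. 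This matches the anchor $(4\pi_j)^\sharp$ of $(T^\ast M)_{4\pi_j}$. For the imaginary algebroid the anchor is $\rho \circ j^\ast = 4\pi_j^\sharp\circ j^\ast = 4j\circ\pi^\sharp\circ j^\ast = -4\pi^\sharp$, which matches the anchor of $(T^\ast M)_{4\pi}$ up to the standard Lie algebroid isomorphism between $(T^\ast M)_{-\alpha}$ and $(T^\ast M)_{\alpha}$ induced by fiberwise negation.

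For the bracket I would pass to the complexification. Writing $(T^\ast M)^{\mathbb C} = (T^{1,0}X)^\ast \oplus (T^{0,1}X)^\ast$ and decomposing $df = \partial f + \overline \partial f$ and $dg = \partial g + \overline\partial g$ for real $f, g$, the $\mathbb C$-bilinear extension of the real bracket on $A_\mathrm{Re}$ splits into four summands: the holomorphic bracket $[\partial f, \partial g]_\Pi$ on the $(1,0)$-summand, its complex conjugate $[\overline\partial f, \overline\partial g]_{\overline\Pi}$ coming from the antiholomorphic Poisson $\overline\Pi = \pi_j - i\pi$, and two mixed terms governed by the universal formula $[\omega_1, \omega_2]_\Pi = \mathcal L_{\rho(\omega_1)}\omega_2 - \mathcal L_{\rho(\omega_2)}\omega_1 - d\langle\rho(\omega_1), \omega_2\rangle$. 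A direct expansion, again invoking $\pi^\sharp\circ j^\ast = j\circ\pi^\sharp$, shows that the total sum collapses to $d(4\pi_j(df, dg))$, which is precisely $[df,dg]_{4\pi_j}$. The imaginary case follows the same pattern after substituting $\rho$ by $\rho\circ j^\ast$ and $\pi_j$ by $\pi$ throughout.

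The main obstacle is the bracket bookkeeping: tracking the four summands in the complexified expression and verifying that the mixed terms $[\partial f, \overline\partial g]$ and $[\overline\partial f, \partial g]$ combine with the pure $(1,0)$- and $(0,1)$-contributions to produce exactly the real derivative $d(4\pi_j(df, dg))$, with the complementary imaginary parts cancelling. The Poisson Nijenhuis identities from Theorem \ref{theor:hP_PN_gc}, especially the interplay $\pi^\sharp\circ j^\ast = j\circ\pi^\sharp$ and the consequent relations between $\pi$ and $\pi_j$, are the crucial algebraic input that makes this collapse work; in the imaginary case the additional Nijenhuis-torsion identities on $(\pi, j)$ ensure that the sign flip coming from $(j^\ast)^2 = -\mathrm{id}$ is absorbed consistently on both anchor and bracket.
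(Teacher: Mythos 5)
The paper offers no proof of this proposition --- it is imported verbatim from \cite{LSX2008} --- so your argument can only be measured against the standard one. Your general strategy (compare anchors everywhere and brackets on a generating family of sections) is sound in outline, and the real-case anchor computation is correct: for real $\omega$ one indeed gets $\Pi^\sharp(\omega - ij^\ast\omega) = 2\pi_j^\sharp\omega + 2i\pi^\sharp\omega$, hence $\rho(\omega) = 4\pi_j^\sharp\omega$. But there are two genuine gaps. The first is the imaginary case: with the conventions of this paper ($j_A = j^\ast$ under the identification $\alpha \mapsto 2\operatorname{Re}\alpha$, and $\pi_j^\sharp = \pi^\sharp\circ j^\ast = j\circ\pi^\sharp$), your own computation produces the anchor $4\pi_j^\sharp\circ j^\ast = -4\pi^\sharp$, and the same sign propagates to the deformed bracket because $(\pi_j)_j = -\pi$. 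Declaring that this ``matches $(T^\ast M)_{4\pi}$ up to the isomorphism induced by fiberwise negation'' does not prove the statement: $(T^\ast M)_{-4\pi}$ and $(T^\ast M)_{4\pi}$ are isomorphic but not equal as Lie algebroid structures on the fixed bundle $T^\ast M$, and the proposition (and its later use in Theorem \ref{theor:real}) asserts equality. You must either identify the sign convention (for $j_A$, for $\pi_\phi$, or for the eigenbundle decomposition) under which the minus sign disappears, or the computation is simply not finished; it cannot be absorbed into a non-identity isomorphism.

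The second gap is structural, in the bracket argument. The real Lie algebroid underlying a holomorphic Lie algebroid is characterized only by the requirement that the $\C$-bilinear extension of its bracket restrict to the holomorphic bracket on holomorphic sections; there is no a priori formula for its value on arbitrary smooth $1$-forms. Your four-summand decomposition of $[df,dg]$ assumes that the $(1,0)$--$(1,0)$ component equals $[\partial f,\partial g]_\Pi$ (false in general, since $\partial f$ need not be holomorphic when $f$ is merely real smooth) and that the mixed components are ``governed by the universal formula'' with the real anchor --- but those mixed terms are exactly the unknown part of the bracket you are trying to determine, so the argument is circular. The standard repair is to reverse the logic: verify that $[-,-]_{4\pi_j}$ is a real Lie algebroid bracket on $T^\ast M$ with the computed anchor, and that its $\C$-bilinear extension, restricted to holomorphic $(1,0)$-forms $\partial f$ with $f$ holomorphic (where $\overline\partial f = 0$ kills all mixed terms), reproduces $[-,-]_\Pi$; then invoke the uniqueness of the real Lie algebroid structure underlying a holomorphic Lie algebroid, which the paper records after Definition \ref{Def:HolomLieAlg}. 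As written, your reduction to exact real $1$-forms generates $\Omega^1(M)$ but does not generate any set of sections on which the unknown bracket is actually known.
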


We conclude the paper discussing the Jacobi analogue of Theorem \ref{prop:real_cotangent}. New features appear in the Jacobi setting due to the fact (hopefully already familiar to the reader) that, unlike for holomorphic Poisson manifolds, a holomorphic Jacobi manifold is not a real Jacobi manifold itself (but rather determines a real Jacobi structure on a certain circle bundle, see Section \ref{sec:HJ_JN_GC}).

First of all, recall that a Jacobi stucture $J$ on a real line bundle $L \to M$ does also determine a Lie algebroid structure on the first jet bundle $\mathfrak J^1_\R L$, denoted $(\mathfrak J^1_\R L)_J$ and called the \emph{jet algebroid of the Jacobi manifold $(M, L, J)$}. The anchor of $(\mathfrak J^1_\R L)_J$ is given by the composition of $J^\sharp : \mathfrak J_\R^1 L \to D_\R L$ followed by the symbol $\sigma : D_\R L \to TM$, and the Lie bracket is $[-,-]_J$ (see (\ref{eq:[-,-]_J})).
 
Similarly, a holomorphic Jacobi structure $(L, J)$ on a complex manifold $X = (M, j)$ determines a holomorphic Lie algebroid structure on $\mathfrak J^1 L \to X$, denoted by $(\mathfrak J^1 X)_J$ and called the \emph{jet Lie algebroid of $(X, L, J)$}. The anchor $\rho : \mathfrak J^1 L \to TX$ is given by 
\[
\rho (\theta) := 2 \operatorname{Re} \left( (\sigma \circ J^\sharp) (\theta) \right)
\]
for all $\theta \in \Gamma (\mathfrak J^1 L)$, and the bracket $[-,-]_J$ acts on two holomorphic sections $\theta_1, \theta_2$ of $\mathfrak J^1 L \to X$ as
\[
[\theta_1, \theta_2]_J = \Ll_{J^\sharp \theta_1} \theta_2 - \Ll_{J^\sharp \theta_2} \theta_1 - \mathfrak j^{1,0} J (\theta_1, \theta_2).
\]
Our next aim is to find the relationship between the real (resp.~the imaginary) Lie algebroid of the holomorphic Lie algebroid $(\mathfrak J^1 L)_J \to X$ and the Lie algebroid $(\mathfrak  J^1_\R L)_{\widehat J_j} \to \widehat M$ (resp.~$ (\mathfrak J^1_\R L)_{\widehat J} \to \widehat M$, see previous section for a definition of $\widehat{J}$). We start discussing the relationship between the vector bundles $\mathfrak J^1 L \to X$ and $\mathfrak  J^1_\R L \to \widehat M$.

Denote by 
\[
\widetilde{p}  : \widetilde X \longrightarrow X, \quad \widehat p  : \widehat M \longrightarrow X, \quad \text{and} \quad
q  : \widetilde X \longrightarrow \widehat M
\]
the projections. Additionally, for a complex manifold $X = (M,j)$, denote by $(T^\ast X)^{1,0}$ the $+i$-eigenbundle of $j^\ast : (T^\ast M)^\C \to (T^\ast M)^\C$, and by $\Omega^{1,0}(X)$ its sections.

\begin{proposition}\label{prop:lift} \
\begin{enumerate}
\item There are canonical isomorphisms of complex vector bundles
\[
(T^\ast \widetilde X)^{1,0} \cong \widetilde p{}^\ast \mathfrak J^1 L \cong  q^\ast \mathfrak J_\R^1 \ell, \quad \text{and} \quad \mathfrak J_\R^1 \ell \cong p^\ast \mathfrak J^1 L. 
\]
\item The isomorphism $(T^\ast \widetilde X)^{1,0} \cong \widetilde p{}^\ast \mathfrak J^1 L$ identifies pull-back sections with $1$-forms $\omega \in \Omega^{1,0}(\widetilde X)$ such that $\mathcal L_H \omega = \omega$ and $\mathcal L_{\overline H} \omega = 0$.
\item The isomorphism $(T^\ast \widetilde X)^{1,0} \cong  q^\ast \mathfrak J_\R ^1 \ell$ identifies pull-back sections with $1$-forms $\omega^{1,0}$ of the kind $\omega^{1,0} =  \omega -i\widetilde j ^\ast\omega $, with $\omega \in \Omega^1 (\widetilde X)$ a $1$-form homogeneous with respect to $\eta$, i.e.~$\mathcal L_\eta \omega = \omega$.
\item The isomorphism $\mathfrak J_\R^1 \ell \cong \widehat p{}^\ast \mathfrak J^1 L$ identifies pull-back sections with section $\theta$ of $\mathfrak J_\R ^1 \ell \to \widehat M$ such that $\mathcal L_{\widehat j \mathbb 1} \theta = \widehat j{}^\dag \theta$.
\end{enumerate}
\end{proposition}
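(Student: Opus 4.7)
The plan is to construct each isomorphism in (1) explicitly from the identification of sections of $L$ (resp.~$\ell$) with homogeneous functions on their slit duals, and then read off (2)--(4) as characterizations of pull-back sections. Throughout, I work locally: fix a holomorphic trivialization $\mu$ of $L$ over a chart $U \subset X$ with holomorphic coordinates $(z^k)$, producing complex coordinates $(z^k, w)$ on $\widetilde X|_U \simeq U \times \C^\times$ via $e = w\mu^\ast(x)$, so that $\widetilde \mu = w$, $H = w\partial_w$, and $\overline H = \bar w \partial_{\bar w}$.

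First, I define $\Psi : \widetilde p^\ast \mathfrak J^1 L \to (T^\ast \widetilde X)^{1,0}$ on pull-back sections by $\widetilde p^\ast(\mathfrak j^{1,0}\lambda) \mapsto \partial \widetilde \lambda$ and extend $C^\infty(\widetilde X)$-linearly. The identities $\widetilde{f\lambda} = \widetilde p^\ast f \cdot \widetilde \lambda$ and $\mathfrak j^{1,0}(f\lambda) - f\mathfrak j^{1,0}\lambda = \gamma^{1,0}(\partial f \otimes \lambda)$ reduce well-definedness to the Leibniz rule for $\partial$. Both bundles have complex rank $\dim_\C X + 1$; in the above coordinates, $\Psi$ sends $\widetilde p^\ast(f\mathfrak j^{1,0}\mu + \gamma^{1,0}(\alpha_k dz^k \otimes \mu))$ to $\widetilde p^\ast f \cdot dw + w\widetilde p^\ast(\partial_k f + \alpha_k)\,dz^k$, which vanishes only if $f = 0$ and $\alpha_k = 0$; hence $\Psi$ is injective on a local frame and therefore a bundle isomorphism. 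Part (2) then follows: writing a general $(1,0)$-form as $\omega = F_0\,dw + F_k\,dz^k$, the condition $\mathcal L_{\overline H}\omega = 0$ forces $F_0, F_k$ to be annihilated by $\bar w \partial_{\bar w}$, and $\mathcal L_H\omega = \omega$ then forces $F_0$ independent of $w$ and $F_k = w\,G_k(z,\bar z)$; this is precisely the image of $\Psi$ on pull-back sections.

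The second isomorphism $(T^\ast \widetilde X)^{1,0} \simeq q^\ast \mathfrak J^1_\R \ell$ is built analogously from a real bundle iso $\Psi_\R : q^\ast \mathfrak J^1_\R \ell \to T^\ast \widetilde M$ given by $q^\ast \mathfrak j^1_\R \widehat \lambda \mapsto d\widehat \lambda$ (treating real sections of $\ell$ as $\eta$-homogeneous real functions on $\widetilde M = \ell^\ast \smallsetminus 0$), composed with the real-linear iso $T^\ast \widetilde M \to (T^\ast \widetilde X)^{1,0}$, $\omega \mapsto \omega - i\widetilde j^\ast \omega$ (twice the $(1,0)$-projection). Pull-back sections correspond to $\eta$-homogeneous real 1-forms, which under this composition give exactly the forms characterized in (3). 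Composing the first two isomorphisms yields a complex bundle iso $\widetilde p^\ast \mathfrak J^1 L \simeq q^\ast \mathfrak J^1_\R \ell$ on $\widetilde X$; this iso is $\R^\times$-equivariant since both sides are pulled back along maps that factor through the quotient $q : \widetilde M \to \widehat M$, so it descends to the third iso $\mathfrak J^1_\R \ell \simeq \widehat p^\ast \mathfrak J^1 L$ on $\widehat M$, thereby endowing $\mathfrak J^1_\R \ell$ with its complex structure.

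Finally, for (4), a section $\widehat \theta$ of $\mathfrak J^1_\R \ell$ identifies with a pull-back section $\widehat p^\ast \theta$ iff its lift to $\widetilde M$, an $\eta$-homogeneous real 1-form $\widehat \omega$, has $\omega := \widehat \omega - i\widetilde j^\ast \widehat \omega$ satisfying the two conditions of (2). Using $\mathcal L_\eta \widetilde j = 0$ and $\mathcal L_{\widetilde j\eta}\widetilde j = 0$ (both from the holomorphicity of the $\C^\times$-action on $\widetilde X$), a direct computation reduces these to the single equation $\mathcal L_{\widetilde j\eta}\widehat \omega = \widetilde j^\ast \widehat \omega$. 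Since $\widetilde j\eta$ corresponds to the derivation $\widehat j \mathbb 1$ of $\ell$ (as noted in the remark preceding this section) and $\widetilde j^\ast$ on $T^\ast \widetilde M$ corresponds to $\widehat j^\dag$ on $\mathfrak J^1_\R \ell$ by definition of the adjoint, this condition descends to $\mathcal L_{\widehat j\mathbb 1}\widehat \theta = \widehat j^\dag \widehat \theta$, proving (4). The main difficulty lies in organizing the various pull-backs and descent arguments consistently --- in particular, verifying that the transported complex structure on $\mathfrak J^1_\R \ell$ descends coherently to $\widehat M$ --- the underlying local computations being routine once the framework is fixed.
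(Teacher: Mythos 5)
Your proposal is correct and follows essentially the same route as the paper: the same generating identification $\mathfrak j^{1,0}\lambda \leftrightarrow \partial\widetilde\lambda$, the same composition of $\omega \mapsto \omega - i\widetilde j{}^\ast\omega$ with the real isomorphism $T^\ast\widetilde M \simeq q^\ast\mathfrak J^1_\R\ell$, and the same reduction of the two homogeneity conditions to the single equation $\mathcal L_{\widetilde j\eta}\omega = \widetilde j{}^\ast\omega$ for claim (4). The only minor divergence is that you obtain $\mathfrak J^1_\R\ell \simeq \widehat p{}^\ast\mathfrak J^1 L$ by descending the composite isomorphism along $q$ via $\R^\times$-equivariance (which holds because the map carries the invariant local frame $\widehat p{}^\ast\mathfrak j^{1,0}\lambda$, $\lambda$ holomorphic, to the invariant frame $q^\ast\mathfrak j^1_\R\widehat\lambda$, so its matrix in these frames is $q$-basic), whereas the paper builds this isomorphism directly from the $C^\infty$-linear correspondence on sections together with injectivity and a rank count.
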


\begin{proof}
The isomorphism $(T^\ast \widetilde X)^{1,0} \cong \widetilde p{}^\ast \mathfrak J^1 L$ identifies $\mathfrak j^{1,0} \lambda$ with $\partial \widetilde \lambda$, where $\lambda \in \Gamma (L)$, and claim (2) can be easily checked, e.g., in local coordinates.

The isomorphism $(T^\ast \widetilde X)^{1,0} \cong  q^\ast \mathfrak J_\R^1 \ell$ is obtained composing the isomorphism $(T^\ast \widetilde X)^{1,0} \to T^\ast \widetilde X$, $\omega \mapsto 2 \operatorname{Re} (\omega)$ with the isomorphism $T^\ast \widetilde M \cong p^\ast \mathfrak J_\R ^1 \ell$ in \cite[Proposition A.3.(3)]{V2015}. Claim (3) is then obvious from the explicit form of the isomorphism $T^\ast \widetilde M \cong p^\ast \mathfrak J_\R ^1 \ell$.

 For the isomorphism $\mathfrak J_\R^1 \ell \cong \widehat p{}^\ast \mathfrak J^1 L$, notice, first of all, that $\mathfrak J_\R ^1 \ell$ and $\mathfrak J^1 L$ have the same complex rank. Now, from claim (2), sections of $\mathfrak J^1 L$ are in one-to-one correspondence with $1$-forms $\omega^{1,0} \in \Omega^{1,0}(\widetilde X)$ such that
\begin{equation}\label{cond}
\mathcal L_H \omega^{1,0} = \omega^{1,0}, \quad  \text{and} \quad  \mathcal L_{\overline{H}} \omega^{1,0} = 0.
\end{equation}
 Let $\omega^{1,0} = \omega -i\widetilde j ^\ast \omega $
for some $\omega \in \Omega^1 (\widetilde M)$. Then conditions (\ref{cond}) are equivalent to $\mathcal L_\eta \omega = \omega$ and $\mathcal L_{\widetilde j  \eta} \omega = \widetilde j ^\ast \omega$. Hence, using claim (3), we find that sections of $\mathfrak J^1 L$ are in one-to-one correspondence with sections $\theta$ of $\mathfrak J_\R^1 \ell$ such that $\mathcal L_{\widehat j \mathbb 1} \theta = \widehat j^\dag \theta$. This correspondence is $C^\infty (M, \mathbb C)$-linear, so it comes from a morphism of vector bundles $\widehat p{}^\ast  \mathfrak J^1 L \to \mathfrak J_\R ^1 \ell$. It is easy to see that this morphism is injective, hence it is an isomorphism. This completes the proof of claim (1) and proves claim (4).
\end{proof}


The following theorem is the analogue of Proposition \ref{prop:real_cotangent} in the Jacobi setting. Additionally, it is the natural starting point for a discussion on the \emph{integration problem} for a holomorphic Jacobi manifold (see \cite{VW2019}).

\begin{theorem}\label{theor:real}
Let $(X, L, J)$ be a holomorphic Jacobi manifold, with $X = (M,j)$, and let $(\widehat J, \widehat j)$ be the associated Jacobi-Nijenhuis structure on the dual $\ell \to \widehat M$ of the tautological bundle over $\widehat M = \R \mathbb P (L^\ast)$. The real (resp.~imaginary) Lie algebroid of the holomorphic Lie algebroid $\mathfrak J^1 L$ acts naturally on the fibration $\widehat M \to M$, and $(\mathfrak J_\R^1 \ell)_{4\widehat J_j}$ (resp.~$(\mathfrak J_\R^1 \ell)_{4\widehat J}$) is the associated action Lie algebroid.
\end{theorem}

We refer to \cite[Section 3]{Kosmann-Mackenzie} for the notions of an (\emph{infinitesimal}) \emph{action of a Lie algebroid on a fibration} and the associated \emph{action Lie algebroid}. 

\begin{proof}[Proof of Theorem \ref{theor:real}]
First of all, the holomorphic Lie algebroid $(\mathfrak J^1 L)_J$ acts on the holomorphic line bundle $L \to X$ via the flat (holomorphic) connection $\nabla$ defined by
\begin{equation}\label{eq:flat_conn}
\nabla_\theta := (\iota \circ J^\sharp) (\theta),
\end{equation}
for all $\theta \in \Gamma (\mathfrak J^1 L)$, where $\iota : D^{1,0} L \to DL$ is the canonical isomorphism. Since the symbol of $\nabla_\theta$ is precisely $\rho (\theta)$, then $\nabla$ is well-defined. To check that it is flat it is enough to check that $[\nabla_{\theta_1}, \nabla_{\theta_2}]$ and $\nabla_{[\theta_1, \theta_2]_J}$ agree on holomorphic sections of $L$, whenever $\theta_1, \theta_2$ are of the form $\mathfrak j^1 \lambda_1, \mathfrak j^1 \lambda_2$ for some holomorphic section $\lambda_1, \lambda_2$ of $L$. This easily follows from the Jacobi identity for the Jacobi bracket $\{-,-\}$ and the formula
\[
[\mathfrak j^{1,0} \lambda_1, \mathfrak j^{1,0} \lambda_2] = \mathfrak j^{1,0} \{ \lambda_1, \lambda_2 \}.
\]
Now, $(\mathfrak J^1 L)_J$ acts on the complex dual line bundle $L^\ast \to X$ as well via the dual connection. When a holomorphic Lie algebroid acts on a holomorphic vector bundle $E$, then its real Lie algebroid acts on $E$ as well and the action is uniquely defined by the same formula on holomorphic sections \cite{LSX2008}. Hence the real Lie algebroid of $(\mathfrak J^1 L)_J$ acts on $L^\ast \to X$. Denote by $\nabla$ again the flat $(\mathfrak J^1 L)_J$-connection in $L^\ast$.

For every $\theta \in \Gamma (\mathfrak J^1 L)$, $\nabla_\theta$ is a section of the holomorphic gauge algebroid of $L^\ast$. In particular, it is a (real) derivation of the (real) vector bundle $L^\ast \to X$. Accordingly, it corresponds to a linear vector field on the total space $L^\ast$ of $L^\ast \to X$. It is easy to see that linear vector fields on $L^\ast$ descend to $\widehat p$-projectable vector fields on $\widehat M = \R \P (L^\ast)$. Hence the real Lie algebroid of $(\mathfrak J^1 L)_J$ acts on the manifold $\widehat M$, and the pull-back $\widehat p{}^\ast \mathfrak J^1 L = \mathfrak J^1_\R \ell \to \widehat M$ is equipped with the action Lie algebroid structure. We claim that the latter coincides with $(\mathfrak J^1_\R \ell)_{4 \widehat J_j}$. To see this it is enough to check that the structure maps of the two Lie algebroids agree on pull-back sections of $\widehat p{}^\ast \mathfrak J^1 L \to \widehat M$. Even more, it is enough to check that the anchors and the brackets agree on sections of the form $\widehat p{}^\ast \mathfrak j^{1,0} \lambda$, with $\lambda$ a holomorphic sections of $L \to X$. To do this we pass through $\widetilde X$. Namely, $\widetilde p^\ast \mathfrak j^{1,0} \lambda$ identifies with $\partial \widetilde \lambda$ through the isomorphism $\widetilde p{}^\ast \mathfrak J^1 L \cong (T^{1,0}M)^\ast$ in Proposition \ref{prop:lift}, and we have $\Ll_H \widetilde \lambda = \widetilde \lambda$. It follows that $f_\lambda := \operatorname{Re} (\widetilde \lambda)$ is a homogeneous function with respect to the principal $\R^\times$-bundle $\widetilde X \to \widehat M$. Hence, $f_\lambda$ corresponds to a section $\widehat \lambda$ of $\ell \to \widetilde M$, and $\widehat p{}^\ast \mathfrak j^{1,0} \lambda$ corresponds to $\mathfrak j^1_\R \widehat \lambda$ via the isomorphism $\widehat p^\ast \mathfrak J^1 L \cong \mathfrak J^1_\R \ell$ (see Proposition \ref{prop:lift} again). Now, let $\lambda_1, \lambda_2$ be holomorphic sections of $L \to X$ and compute
\[
[\mathfrak j^1_\R \widehat \lambda_2 , \mathfrak j^1_\R \widehat \lambda_2 ]_{4\widehat J_j} = \mathfrak j^1_\R \{ \widehat \lambda_1 , \widehat \lambda_2\}_{4\widehat J_j}
\]
where $\{-,-\}_{4\widehat J_j}$ is the Jacobi bracket corresponding to the Jacobi structure $4 \widehat J_j$. To complete with the brackets, it is enough to check that 
\[
\{ \widehat \lambda_1 , \widehat \lambda_2\}_{4\widehat J_j} = \widehat {\{ \lambda_1, \lambda_2\}}.
\]
This is easy, indeed $\{ \lambda_1, \lambda_2\}$ corresponds to the homogeneous function $\{ \widetilde \lambda_1, \widetilde \lambda_2 \}_\Pi$ on $\widetilde X$ with respect to $H$. Now from \cite[Corollary 2.4]{LSX2008}
\[
 \operatorname{Re}(\{ \widetilde \lambda_1, \widetilde \lambda_2 \}_\Pi )=  \{ \operatorname{Re}(\widetilde \lambda_1), \operatorname{Re}(\widetilde \lambda_2 )\}_{4\pi_j}
\]
which is the homogeneous function with respect to $\eta$ corresponding to $\{ \widehat \lambda_1 , \widehat \lambda_2\}_{4\widehat J_j} $, as claimed.

It remains to check that the anchor $\rho$ of $(\mathfrak J^1_\R \ell)_{4\widehat J_j}$ and the anchor $\rho'$ of the action Lie algebroid $\widehat p{}^\ast  \mathfrak J^1 L \to \widehat M$ agree on sections of the form $\widehat p{}^\ast \mathfrak j^{1,0} \lambda$, where $\lambda$ is a holomorphic section of $L \to X$. Using the same notations as above, compute
\begin{equation}\label{eq:rho}
\rho (\mathfrak j^1_\R \widehat \lambda) (f) = \sigma (\{ \widehat \lambda , -\}_{4 \widehat J_j}) (f).
\end{equation}
for all (local) functions $f$ on $\widehat M$. The right hand side of (\ref{eq:rho}) is the push forward of the function 
\[
\{ \operatorname{Re}( \widetilde \lambda), f \}_{4\pi_j} 
\]
on $\widetilde X$, the latter being constant along the fibers of $q : \widetilde X \to \widehat M$. 
In fact, it is enough to choose $f$ to be the (pushforward of the) quotient $g_1/g_2$ of two fiber-wise $\R$-linear functions $g_1,g_2$ on $L^\ast$. We can even choose $g_1,g_2$ to be the real parts of two fiber-wise $\C$-linear, holomorphic functions $\gamma_1, \gamma_2$ on $L^\ast$. So $\gamma_1, \gamma_2$ are actually holomorphic sections of $L \to X$. In this case, from \cite[Corollary 2.4]{LSX2008} again,
\[
\{ \operatorname{Re} (\widetilde \lambda), f \}_{4\pi_j} = \frac{\operatorname{Re} (\{ \widetilde \lambda, \widetilde \gamma_1\}_\Pi )g_2 - \operatorname{Re} (\{ \widetilde \lambda, \widetilde \gamma_2\}_\Pi) g_1 }{g_2^2}.
\]
On the other hand
\[
\rho '(\mathfrak j^1_\R \widehat \lambda) (f) = \frac{\operatorname{Re} \left(\iota \{  \lambda, -\} (\gamma_1)\right) g_2 - \operatorname{Re} \left(\iota \{  \lambda, -\} (\gamma_2)\right) g_1 }{g_2^2},
\]

and, to conclude, it is enough to notice that, from $\lambda, \gamma$ being holomorphic, it follows that $\iota\{\lambda, - \} (\gamma) = \{ \lambda, \gamma \}$ corresponds to the homogeneous function $\{ \widetilde \lambda, \widetilde \gamma \}_\Pi$ on $\widetilde X$, $\gamma = \gamma_1, \gamma_2$.  

As for the Lie algebroid $(\mathfrak J^1_\R \ell)_{\widehat J}$, it is enough to notice that, if $J \in \Gamma (D^{2,0} L)$ is a holomorphic Jacobi structure on $L$, then $iJ$ is also a holomorphic Jacobi structure and $\widehat{iJ} = \widehat J_j$. We leave details to the reader. We only remark that the action of the imaginary Lie algebroid of $(\mathfrak J^1 L)_J$ on $\widehat M \to X$ is induced by a linear action on $L$, given by a flat connection $\nabla'$ defined by 
\[
\nabla'_\theta := (\iota \circ J^\sharp) (i \theta)= (\iota \circ J^\sharp) (j_{\mathfrak J^1 L} \theta),
\]
for all $\theta \in \Gamma (\mathfrak J^1 L)$.
\end{proof}

\begin{remark}
	New tools such as the construction of the vector bundle 	$\mathfrak J_\R^1 L  \to \widehat M$ were needed because holomorphic Jacobi structures  are much more complicated than  holomorphic Poisson ones  \cite{LSX2008}. In particular, this  construction allows us to unravel the
relationship between the real  (resp. imaginary Lie algebroid of the holomorphic Lie algebroid $(\mathfrak J^1 L)_J \to X$ and the Lie algebroid $(\mathfrak  J^1_\R L)_{\widehat J} \to \widehat M$ (resp.~$ (\mathfrak J^1_\R L)_{\widehat J_j} \to \widehat M$)).  Clearly,  as already outlined, unlike  in the holomorphic Poisson case, $(\mathfrak  J^1_\R L)_{\widehat J} \to \widehat M$ cannot be just the same as the real Lie algebroid  of $(\mathfrak J^1 L)_J \to X$ since these two have different base manifolds. This shows once again that the holomorphic Jacobi case  is richer than the holomorphic Poisson case. 

\end{remark}
\section*{Acknowledgements}
The authors thank Ping Xu for useful discussions. Luca Vitagliano  thanks Ping Xu and Aissa Wade for inviting him to the Mathematics Department of PSU in February 2015, when part of this project has been developed. Luca Vitagliano's visit to PSU was supported by the Shapiro Visitor Program at Penn State. Luca Vitagliano  is member of the GNSAGA of INdAM, Italy.


\begin{thebibliography}{99}
 
 \bibitem{AFL2017}A.~Alarc\'on, F.~Forstneri\v{c}, and F.~J.~L\'opez, Holomorphic Legendrian curves, \emph{Compositio Math.~}\textbf{153} (2017), 1945--1986; e-print: arXiv:1607.00634.

\bibitem{Bailey} M.~Bailey,  Local classification of generalized complex structures, \emph{J.~Differential Geom.~}\textbf{95} (2013), 1--37; e-print: arXiv:1201.4887. 


\bibitem{CL2010}Z.~Chen, and Z.~Liu, Omni-Lie algebroids, \emph{J.~Geom.~Phys.~}\textbf{60} (2010), 799--808; e-print: arXiv:0710.1923.

\bibitem{CLS2010}Z.~Chen, Z.~Liu, Y.~Sheng, $E$-Courant algebroids, \emph{Int.~Math.~Res.~Not.~}\textbf{2010} (2010), 4334--4376; e-print: arXiv:0805.14093.

\bibitem{CLS2011}Z.~Chen, Z.~Liu, Y.~Sheng, Dirac structures of omni-Lie algebroids, \emph{Int.~J.~Math.~}\textbf{22} (2011), 1163--1185; e-print: arXiv:0802.3819.
%
\bibitem{C2011}M.~Crainic, Generalized complex structures and Lie brackets, \emph{Bull.~Braz.~Math.~Soc., New Series }\textbf{42} (2011), 559--578; e-print: arXiv:math/0412097.

\bibitem{CS2015}M.~Crainic, and M.~A.~Salazar, Jacobi structures and Spencer operators, \emph{J.~Math.~Pures Appl.~}\textbf{103} (2015), 504--521; e-print: arXiv:1309.6156.

\bibitem{DLM1991}P.~Dazord, A.~Lichnerowicz, and C.-M.~Marle, Structures locale des vari\'et\'es de Jacobi, \emph{J.~Math.~Pures Appl.~}\textbf{70} (1991), 101--152.

\bibitem{FM2012} D.~Fiorenza, and M.~Manetti, 
Formality of Koszul brackets and deformations of holomorphic Poisson manifolds, 
\emph{Homology Homotopy Appl.~}\textbf{14} (2012), 63--75; e-print: arXiv:1109.4309.

\bibitem{G2010} R.~Goto, Deformations of generalized complex and generalized K\"ahler structures, \emph{J.~Differential Geom.~}\textbf{84} (2010), 525--560: e-print: arXiv:0705.2495.

\bibitem{G2013}J.~Grabowski, Graded contact manifolds and contact Courant algebroids, \emph{J.~Geom.~Phys.~}\textbf{68} (2013), 27--58; e-print: arXiv:1112.0759.
%
\bibitem{G2011}M.~Gualtieri, Generalized complex geometry, \emph{Ann.~Math.~}\textbf{174} (2011) 75--123; e-print: arXiv:math/0703298.
%
\bibitem{H2003}N.~Hitchin, Generalized Calabi-Yau manifolds, \emph{Quart.~J.~Math.~}\textbf{54} (2003) 281--308; e-print: arXiv:math/0209099.
 

\bibitem{H2012}N.~Hitchin, Deformations of holomorphic Poisson manifold, \emph{Moscow Math.~J.~} \textbf{12} (2012), 567--591, 669; e-print: arXiv:1105.4775. 
 
\bibitem{Kirillov1976} A.~A.~Kirillov, Local Lie algebras, \emph{Russian Math.~Surveys} \textbf{31} (1976), 57--76.

\bibitem{K1959}S.~Kobayashi, Remarks on holomorphic contact manifolds, \emph{Proc.~Amer.~Math.~Soc.~}\textbf{10} (1959), 164--167.

\bibitem{Kosmann-Mackenzie} Y.~Kosmann-Schwarzbach, K.~C.~H.~Mackenzie, Differential operators and actions of Lie algebroids, in: Quantization, Poisson brackets and beyond (Manchester, 2001), 
\emph{Contemp.~Math.~}\textbf{315}, Amer.~Math.~Soc., Providence, RI, 2002, pp.~213--233; e-print: arXiv:math.DG/0209337.

\bibitem{LSX2008}C.~Laurent-Gengoux, M.~Sti\'enon, and P.~Xu, Holomorphic Poisson manifolds and holomorphic Lie algebroids, \emph{Int.~Math.~Res.~Not.~}\textbf{2008} (2008) 1--46; e-print: arXiv:0707.4253.

%
\bibitem{LOTV2014}H.~V.~L\^e, Y.-G.~Oh, A.~G.~Tortorella, and L.~Vitagliano, Deformations of coisotropic submanifolds in abstract Jacobi manifolds, \emph{J.~Sympl.~Geom.~}\textbf{16} (2018), 1051--1116; e-print: arXiv:1410.8446.

\bibitem{L1995}C.~Le Brun, Fano manifolds, contact structures, and quaternionic geometry, \emph{Int.~J.~Math.~}{\textbf 6} (1995), 419--437; e-print: arXiv:dg-ga/9409001.
 
\bibitem{Lichn1978}A.~Lichnerowicz, Les vari\'et\'es de Jacobi et leurs alg\`ebres de Lie associ\'ees, \emph{J.~Math.~Pures Appl.~}\textbf{57} (1978), 453--488.
 
%
\bibitem{M2005}K.~C.~H.~Mackenzie, General theory of Lie groupoids and Lie algebroids, Cambridge Univ.~Press, Cambridge, 2005.
 
\bibitem{Marle1991} C.~M.~Marle, On Jacobi manifolds and Jacobi bundles, in: Symplectic geometry, groupoids, and integrable systems (Berkeley, CA, 1989), \emph{Math.~Sci.~Res.~Inst.~Publ.~}\textbf{20}, Springer, New York, 1991, pp.~227--246.
 
%
\bibitem{NT1987} T.~Nitta, and M.~Takeuchi, Contact structures on twistor spaces, \emph{J.~Math.~Soc.~Japan} {\bf  39} (1987), 139--162; e-print:

\bibitem{MMP1999}J.-C.~Marrero, J.~Monterde, and E.~Padr\'on, Jacobi-Nijenhuis manifolds and compatible Jacobi structures, \emph{C.~R.~Acad.~Sci.~Paris, Ser.~I} \textbf{329} (1999), 797--802.

\bibitem{S1982}S.~Salomon, Quaternionic K\"ahler manifolds, \emph{Invent.~Math.~}{\bf 67} (1982),  143--171.

 
\bibitem{SX2007}M.~Sti\'enon, and P.~Xu, Poisson quasi-Nijenhuis manifolds, \emph{Commun.~Math.~Phys.~}\textbf{270} (2007), 709--725; e-print: arXiv:math/0602288.
 
\bibitem{V2015}L.~Vitagliano, Dirac-Jacobi bundles, \emph{J.~Sympl.~Geom.~}, \textbf{16} (2018), 485--561; e-print: arXiv:1502.05420.

\bibitem{VW2016}L.~Vitagliano, and A.~Wade, Generalized contact bundles, \emph{C.~R.~Math.~}\textbf{354} (2016), 313--317; e-print: arXiv:1507.03973.

\bibitem{VW2019}L.~Vitagliano, and A.~Wade, Holomorphic Jacobi manifolds and holomorphic contact groupoids, \emph{Math.~Z.}, published online, doi:10.1007/s00209-019-02320-x; e-print: arXiv:1710.03300.

\end{thebibliography}
\end{document}